\renewcommand\setminus{\smallsetminus}
\providecommand{\M}{\mathcal{M}}
\providecommand{\N}{\mathbb{N}}
\newcommand{\C}{\mathcal{C}}
\renewcommand{\L}{\mathcal{L}}
\renewcommand{\M}{\mathcal{M}}
\renewcommand{\N}{\mathcal{N}}
\providecommand{\C}{\mathcal{C}}
\providecommand{\set}[1]{\lbrace #1 \rbrace}
\providecommand{\abs}[1]{\lvert#1\rvert}
\providecommand{\Th}[1]{ {\rm Th} (#1)}
\newcommand\w{\omega}
\newcommand\Sum{\Sigma}
\DeclareMathOperator{\acl}{acl}
\DeclareMathOperator{\SRM}{SRM}
\DeclareMathOperator{\tp}{tp}
\DeclareMathOperator{\tprqf}{tp_{r.q.f.}}
\DeclareMathOperator{\arity}{arity}
\newtheorem{thm}{Theorem}[section]
\newtheorem{defn}[thm]{Definition}
\newtheorem{lem}[thm]{Lemma}
\newtheorem{obs}[thm]{Observation}
\newtheorem{cory}[thm]{Corollary}
\newtheorem*{claim*}{Claim}
\theoremstyle{definition}
\newtheorem*{remark*}{Remark}
\newcommand{\dalet}{\ensuremath{%
\mathchoice{\includegraphics[height=2ex]{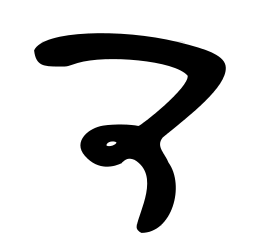}}
{\includegraphics[height=2ex]{daled}}
{\includegraphics[height=1.5ex]{daled}}
{\includegraphics[height=1ex]{daled}}
}}
\newcommand{\setcol}[2]{\{#1 \mid #2\}}
\newcommand{\strong}{\leq}
\title[${[0,n]}\cup \{\omega\}$ is a spectrum]{$[0,n]\cup \{\omega\}$ is a spectrum of a non-disintegrated flat strongly minimal model complete theory in a language with finite signature}
\author{Uri Andrews, Omer Mermelstein}
\address{Department of Mathematics, University of Wisconsin--Madison, 480 Lincoln Dr., Madison, WI 35706, USA}
\email{andrews@math.wisc.edu, omer@math.wisc.edu}
\thanks{The first author was partially supported by NSF grant DMS-1600228.}
\keywords{Spectrum of recursive models, Spectrum of computable models, Flatness, Hrushovski construction}
\subjclass[2020]{03C57, 03D47, 03C30}
\begin{document}

\begin{abstract}
	We build a new spectrum of recursive models ($\SRM(T)$) of a strongly minimal theory. This theory is non-disintegrated, flat, model complete, and in a language with a finite signature.
\end{abstract}

\maketitle
\section{Introduction}

The countable models of uncountably categorical theories were characterized by Baldwin and Lachlan \cite{BL} as being completely determined by a single dimension. Thus these models are very well understood model-theoretically. We seek to also understand them recursion-theoretically. A fundamental question is which of these models have recursive presentations.

From Baldwin and Lachlan's characterization in terms of dimensions, the countable models of any uncountably categorical but not countably categorical theory form an elementary chain $M_0\prec M_1\prec M_2\prec\cdots\prec M_\omega$. For such a theory $T$, the Spectrum of Recursive Models of $T$ ($\SRM(T)$) is the set of $i$ so that $M_i$ has a recursive presentation. The spectrum problem asks which subsets of $\omega+1$ appear as $\SRM(T)$ for some theory $T$.

The following theorem summarizes all the positive results currently known about the existence of spectra.

\begin{thm}\label{knownSpectra}
	The following are spectra of recursive models of strongly minimal theories:
	\begin{itemize}
		\item $\emptyset$
		\item $[0,\w]$, i.e., $\w+1$
		\item $\{0\}$ (Goncharov~\cite{Go78})
		\item $[0,n]$, i.e., $\{0,\ldots n\}$, for any $n \in \w$
		(Kudaibergenov~\cite{Ku80})
		\item $[0,\w)$, i.e.,~$\w$ (Khoussainov/Nies/Shore~\cite{KNS97})
		\item $[1,\w]$, i.e., $(\w+1)\smallsetminus\{0\}$
		(Khoussainov/Nies/Shore~\cite{KNS97}, see also \cite{KSS07})
		\item $\{1\}$ (Nies~\cite{Ni99})
		\item $[1,\alpha)$ for any $\alpha \in [2,\w]$ (Nies/Hirschfeldt, see
		Nies~\cite[p.~314]{Ni99})
		\item $\{\w\}$ (Hirschfeldt/Khoussainov/Semukhin~\cite{HKS06})
		\item $\{0,\w\}$ (Andrews~\cite{A0w})
	\end{itemize}
\end{thm}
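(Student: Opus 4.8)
The statement is a compendium of known results, so I would prove it case by case: give direct arguments for the two items that appear without attribution, and cite the literature for the remaining eight.

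For $\SRM(T)=[0,\omega]$ I would exhibit a witnessing theory all of whose countable models are recursive: take $T$ to be the theory of algebraically closed fields of characteristic zero (or, equally, an infinite-dimensional $\mathbb{Q}$-vector space). This $T$ is strongly minimal and not $\aleph_0$-categorical, its Baldwin--Lachlan chain $M_0\prec M_1\prec\cdots\prec M_\omega$ is indexed by transcendence degree (resp.\ linear dimension), and each $M_i$ has an evident recursive presentation, so every index lies in the spectrum. For $\SRM(T)=\emptyset$ I would run a coding argument: starting from such a tame strongly minimal theory, one codes a fixed noncomputable set $A$ into the algebraic-closure data uniformly enough that the atomic diagram of \emph{every} model computes $A$, and then checks that the modification preserves both strong minimality and the shape of the Baldwin--Lachlan chain. (A coding of this kind inside a flat Hrushovski-style construction of the sort used below is also available.)

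For the remaining eight cases the proof is by citation, but it is worth isolating the two recurring mechanisms. Membership $i\in\SRM(T)$ is always witnessed by building $T$ and $M_i$ together by an effective Henkin-style construction so that the atomic diagram of $M_i$ is decidable — directly for finite $i$, and via a recursive generic for $i=\omega$. Non-membership $i\notin\SRM(T)$ is always a diagonalization: one enumerates the candidate recursive structures $\mathcal{N}_e$ and, during the construction of $T$, acts to guarantee $\mathcal{N}_e\not\cong M_i$, typically by forcing an algebraic-closure disagreement or by coding a noncomputable set that is decodable only from a presentation of $M_i$. Combining these: Goncharov \cite{Go78} gives $\{0\}$; Kudaibergenov \cite{Ku80} pushes the cutoff to an arbitrary finite $n$, giving $[0,n]$; Khoussainov--Nies--Shore \cite{KNS97} give $[0,\omega)$ and $[1,\omega]$ (the latter also in \cite{KSS07}); Nies \cite{Ni99} gives $\{1\}$, and the Nies--Hirschfeldt refinement recorded there gives $[1,\alpha)$ for $2\le\alpha\le\omega$; Hirschfeldt--Khoussainov--Semukhin \cite{HKS06} give $\{\omega\}$; and Andrews \cite{A0w} gives $\{0,\omega\}$.

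The main obstacle — were one to insist on self-contained proofs rather than citations — is the family of "gapped" spectra $\{\omega\}$ and $\{0,\omega\}$, where $M_\omega$ must be recursive while some finite-dimensional model is not. The tension is that every finite-dimensional $M_j$ sits inside $M_\omega$ over finitely many parameters, so a careless construction makes the diagram of $M_j$ computable from that of $M_\omega$; defeating this requires coding that is active only at the finite-dimensional levels, together with a priority argument balancing the competing recursiveness and non-recursiveness requirements. These are precisely the constructions the present paper extends: the new spectrum $[0,n]\cup\{\omega\}$ fuses the Kudaibergenov-style finite cutoff with the $\{0,\omega\}$-style gap, which is why flatness and the Hrushovski-construction machinery are needed in what follows.
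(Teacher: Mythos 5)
Your proposal is correct and matches the paper's treatment: the paper offers no proof of this theorem at all, presenting it purely as a survey of known results with the same attributions you give (Goncharov, Kudaibergenov, Khoussainov--Nies--Shore, Nies, Nies--Hirschfeldt, Hirschfeldt--Khoussainov--Semukhin, Andrews), and the two unattributed cases $\emptyset$ and $[0,\omega]$ are folklore, handled essentially as you sketch (e.g.\ vector spaces or ACF$_0$ for $[0,\omega]$, and coding a noncomputable set into every model for $\emptyset$).
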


While relatively few sets are known to be spectra, the only known upper bound on all spectra is that every spectrum must be $\Sigma^0_{\omega+3}$, and for a model complete theory, $\SRM(T)$ is $\Sigma^0_4$ \cite{Ni99}.

In the hopes of coming to a better understanding of possible spectra, a few approaches have been taken, including: Focusing on the strongly minimal theories, focusing on theories with particular geometric properties, and focusing on theories in languages with finite signatures. In these cases, we have better characterizations of the possible spectra.

By ``particular geometric properties'', we refer to the Zilber trichotomy: Zilber conjectured that every strongly minimal theory is either disintegrated ($\acl(A)=\bigcup_{a\in A}\acl(a)$) or locally modular (after adding one constant, we get $\dim(A\cup B)=\dim(A)+\dim(B)-\dim(A\cap B)$ for any finite-dimensional closed sets $A$ and $B$) or is field-like (there is an interpretable an infinite field with no definable sets on it aside from the ones definable in the field itself).

Under such assumptions, we can completely characterize the possible spectra:

\begin{thm}[Andrews-Medvedev \cite{AM}]
	If $T$ is disintegrated strongly minimal and the language has a finite signature, then $\SRM(T)=\emptyset,\{0\},$ or $[0,\omega]$.	
\end{thm}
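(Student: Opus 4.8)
The plan is to reduce to a single implication and then do effective model theory with the recursive model in hand. It suffices to establish
\[
(\star)\qquad\text{if } M_d \text{ is recursive for some } 1\le d\le\omega, \text{ then } M_i \text{ is recursive for all } i\le\omega,
\]
since then either no $M_i$ with $i\ge 1$ is recursive --- whence $\SRM(T)\subseteq\{0\}$, so $\SRM(T)=\emptyset$ or $\{0\}$ --- or some $M_i$ with $i\ge 1$ is recursive, and $(\star)$ forces $\SRM(T)=[0,\omega]$.

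The structural input to $(\star)$ is disintegration. Writing $M_0=\acl(\emptyset)$ and fixing a generic point $b$, strong minimality makes $M_1\cong\acl(b)$ an infinite model --- the algebraic closure of one generic point over $M_0$ --- and for independent generics $a_1,\dots,a_k$ disintegration gives $\acl(a_1,\dots,a_k)=\bigcup_j\acl(a_j)$ with $\acl(a_j)\cap\acl(a_{j'})=M_0$ for $j\ne j'$. The essential point, which I would extract from disintegration and strong minimality, is that the quantifier-free type of a finite tuple from $\acl(a_1,\dots,a_k)$ is determined by the quantifier-free types of its intersections with the individual $\acl(a_j)$'s: $M_k$ is the \emph{free} amalgam over $M_0$ of $k$ copies of $M_1$, and $M_\omega$ the free amalgam over $M_0$ of $\omega$ copies, so every $M_i$ ($i\le\omega$) is recovered from the single elementary pair $M_0\preceq M_1$ by a purely combinatorial gluing. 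Here $d\ge 1$ is indispensable: a recursive copy of $M_0$ contains no generic point, hence exposes neither $M_1$ nor the generic $1$-type, which is why $\{0,1\}$, $\{1\}$, etc.\ are ruled out.

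It remains to make this effective from a recursive copy $M$ of $M_d$, $d\ge 1$. The crux is to show that in such an $M$ the algebraic closure relation ``$c\in\acl(\bar a)$'' is recursive, uniformly in $\bar a,c$ --- equivalently, that one can compute for each formula $\varphi(x,\bar y)$ whether it is algebraic and, if so, its uniform finiteness bound $n_\varphi$ (which strong minimality provides abstractly: $\lvert\varphi(M,\bar a)\rvert\le n_\varphi$ or $\lvert\neg\varphi(M,\bar a)\rvert\le n_\varphi$ for all $\bar a$). Granting this, one locates a generic $b\in M\setminus\acl(\emptyset)$, extracts recursive copies of $M_0=\acl(\emptyset)$ and $M_1=\acl(b)$ as (automatically elementary) substructures of $M$, and runs the gluing of the previous paragraph: amalgamating $i$ copies of $M_1$ over $M_0$ yields a recursive copy of $M_i$ for every finite $i$, uniformly, and dovetailing the $\omega$ copies yields a recursive copy of $M_\omega$; this is $(\star)$.

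The main obstacle is precisely that effectivity claim --- that $\acl$, and with it enough control of quantifier-free types to run the amalgamation, is computable from a recursive model of positive dimension. The reduction is soft and the gluing construction is routine bookkeeping once the claim is in hand; the claim is where both hypotheses are consumed --- disintegration, to reduce $\acl$ of tuples to $\acl$ of points and to ensure the limit of the construction really is $M_i$, and the finiteness of the signature, to keep the search over candidate algebraicity-witnesses bounded enough to read the numbers $n_\varphi$ off the recursive atomic diagram --- and it genuinely has no analogue when $d=0$.
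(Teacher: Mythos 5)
This theorem is cited in the paper from \cite{AM} as background and is not proved there, so there is no in-paper argument to compare against; I evaluate the proposal on its own. The reduction to $(\star)$ is correct and is the natural skeleton, but the proposal stops precisely where the theorem's content begins. First, the structural claim is imprecise in a way that hides work: ``$M_k$ is the \emph{free} amalgam of $k$ copies of $M_1$ over $M_0$'' is not what disintegration gives you. Disintegration says $\acl(a_1,\dots,a_k)=\bigcup_j\acl(a_j)$ as sets, but atomic relations can perfectly well hold between elements of distinct $\acl(a_j)$'s --- by stationarity over $\acl(\emptyset)$ they hold uniformly, depending only on the types of the endpoints and on independence, but they need not be absent --- so the gluing is a canonical amalgam dictated by $T$, not a free one. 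To carry out that gluing effectively you must compute, for each pair of types realized in $M_1$, which cross-component atomic relations hold between independent realizations; when $d=1$ no independent pair of generics is even present in $M_1$ to read this off, so the data has to be recovered from the theory rather than observed in a pair of components of the given recursive model. That recovery is a substantive step your sketch does not address.

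Second, and more centrally, the ``effectivity claim'' you correctly flag as the crux is asserted rather than proven, and the gloss you give for it does not work as stated. The relation $c\in\acl(\bar a)$ is equivalent to the existence of some first-order (not merely atomic) formula $\phi(x,\bar y)$ with $c\in\phi(M,\bar a)$ and $\phi(M,\bar a)$ finite, which over a recursive atomic diagram is a $\Sigma^0_2$ condition. Strong minimality tells you that one of $\phi(M,\bar a)$, $\neg\phi(M,\bar a)$ is finite, which yields a $\Delta^0_2$ but not obviously $\Delta^0_1$ decision procedure; the bounds $n_\phi$ exist by compactness but are not handed to you by the recursive presentation, and finiteness of the signature does not bound the search over candidate $\phi$, since $\phi$ ranges over all first-order formulas and not just atomic ones. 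Getting $\acl$ --- and along with it the cross-component data needed for the amalgamation --- $\Delta^0_1$ from the atomic diagram is exactly where disintegration and the finite signature must be exploited through a much more delicate combinatorial analysis than ``read the numbers $n_\varphi$ off the recursive atomic diagram.'' As it stands you have a plausible plan with the hard lemma labeled, not a proof.
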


By Herwig-Lempp-Ziegler \cite{HLZ}, all three of these cases are in fact spectra of disintegrated theories in languages with finite signature.

\begin{thm}[Andrews-Medvedev \cite{AM}]
	If $T$ is a modular strongly minimal theory expanding a group in a language with finite signature, then $\SRM(T)=\emptyset,\{0\},$ or $[0,\omega]$.
	
	If $T$ is a field-like strongly minimal theory expanding a field in a language with finite signature, then $\SRM(T)= [0,\omega]$.
	
\end{thm}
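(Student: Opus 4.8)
The plan is to treat the two clauses by quite different means. For the field-like clause, the trichotomy hypothesis collapses the problem to a statement about computable presentations of algebraically closed fields; for the modular-group clause, the real work is to extract effectivity of the closure operations from the structure theory of $1$-based groups.

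\emph{The field-like clause.} By the trichotomy hypothesis a field-like strongly minimal $T$ interprets an infinite field carrying no definable structure beyond that of the field, and since $T$ already expands a field on its home sort, $T$ is interdefinable with the theory of an algebraically closed field of some fixed characteristic expanded by finitely many constants $\bar c$ (any further definable functions or relations are absorbed into a definitional expansion and change nothing). Its countable models are then, up to isomorphism, the fields $\overline{k(\bar c)(t_1,\dots,t_i)}$ with $k=\mathbb{Q}$ or $\mathbb{F}_p$ and $t_1,\dots,t_i$ algebraically independent, and the one of dimension $i$ is $M_i$. Each of these has a recursive presentation — a finitely generated purely transcendental extension of a recursive field is recursively presentable, and so is the algebraic closure of a recursive field — and the construction is uniform in $i$, so the nested union $M_\omega$ is recursively presentable too. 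Hence $\SRM(T)=[0,\omega]$.

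\emph{The modular-group clause.} The key input is algebraic: a modular (equivalently $1$-based) strongly minimal group is, after naming finitely many parameters, a vector space over an interpretable division ring $D$, the remaining $\emptyset$-definable structure being built from finitely many cosets of $\emptyset$-definable subgroups (this is the standard structure theory of $1$-based groups, due to Hrushovski). From this I would extract two effectivity facts, uniform in any recursive copy of any $M_i$. First, the algebraic closure of a finite set $B$ is recursive: by the structure theorem $\acl(B)$ is an explicit ``$D$-affine span with divisions'' of $B$ together with the named constants, so membership is recursively enumerable (search for a witnessing $D$-linear relation — the division ring $D$ may be infinite, but scalar multiplication by a given element is the solution of a $\emptyset$-definable equation and hence uniformly recursively searchable), while non-membership is a $\Pi^0_1$ condition (failure of all such relations, checked relation-by-relation), so $\acl(B)$ is also co-recursively enumerable. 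Second, the generic one-point extension of a recursive model $M$ is recursively presentable: adjoining a fresh element $b$ and closing under $\acl$ yields a structure whose isomorphism type over $M$ is pinned down by $T$ together with the (recursive) $D$-linear and coset data of $M$.

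Granting these, the conclusion is bookkeeping. Suppose $M_i$ is recursively presentable for some $i\ge 1$ and fix a recursive copy. Using recursiveness of $\acl(\emptyset)$ one can locate an element $a\notin\acl(\emptyset)$; then $\acl(a)$ is infinite (it contains $\acl(\emptyset)$), hence by strong minimality an elementary submodel, and it has dimension exactly $1$, so $\acl(a)\cong M_1$; being a recursive subset of a recursive structure it gives a recursive presentation of $M_1$, and likewise $\acl(\emptyset)\cong M_0$ is recursively presentable. By the second fact and its iteration, uniformly in the number of steps, recursiveness of $M_1$ yields recursive presentations of every finite $M_j$ arranged as an increasing chain, and the effective union gives $M_\omega$. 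Thus membership of any $i\ge 1$ in $\SRM(T)$ forces $\SRM(T)=[0,\omega]$, while otherwise $\SRM(T)\subseteq\{0\}$; so $\SRM(T)\in\{\emptyset,\{0\},[0,\omega]\}$.

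\emph{Where the difficulty lies.} The recursion-theoretic steps above are routine; the crux is the two effectivity facts, i.e.\ that a finite signature may contain many ``extra'' function and relation symbols without destroying the effectivity of $\acl$ and of the generic extension. This is exactly the point at which the structure theorem is indispensable: it converts ``$b\in\acl(B)$'', which a priori is not evidently decidable in a recursive model, into an explicit bounded closure process, and it reduces the generic extension to a computable amount of linear data. Indeed, the main construction of the present paper shows that without such structural input this effectivity genuinely fails — a strongly minimal theory in a finite signature can have $\SRM(T)=[0,n]\cup\{\omega\}$ — so some hypothesis on the geometry is unavoidable.
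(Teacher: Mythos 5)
This statement is quoted in the paper as background and attributed to Andrews--Medvedev~\cite{AM}; the paper itself gives no proof, so there is nothing in the present source to compare your argument against line by line. Evaluated on its own terms, your overall strategy---reduce each clause to a structure theorem, then extract effectivity---is indeed the right shape, but the places you flag as ``the two effectivity facts'' are not merely the crux, they are genuine gaps that your sketch does not close.

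The central unjustified step, in both clauses, is the passage from \emph{definable} to \emph{recursive}. In the modular-group clause you assert that, in a recursive copy of $M_i$, the interpretable division ring $D$, the scalar multiplications, the coset data, and hence $\acl$ are all effectively accessible. But a definable subset of a recursive structure need not be recursive: deciding $\phi(\bar a)$ requires evaluating quantifiers, and nothing in the hypotheses gives you decidability of $\Th(M_i)$ or an effective quantifier elimination. Your claim that non-membership in $\acl(B)$ is $\Pi^0_1$ ``checked relation-by-relation'' does not by itself produce a decision procedure; $\acl$ in an arbitrary recursive strongly minimal model is a priori only $\Sigma^0_2$. The whole content of the Andrews--Medvedev result is that the finite signature plus the geometric hypothesis \emph{forces} this effectivity, and that forcing argument is precisely what is missing from the proposal. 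The same remark applies to your ``generic one-point extension'' claim: it is plausible once one knows $D$ and the coset structure are recursive, but that is the very thing that has to be proved. In the field-like clause the situation is better because $\mathrm{ACF}$ is decidable, so definable sets (with fixed parameters) in a recursive algebraically closed field are uniformly recursive; but you still owe an argument that the defining parameters for the extra finitely many symbols can be located effectively inside a given recursive presentation of $M_i$, and that the definitional-expansion step is uniform enough to give a recursive $M_\omega$. In short, the recursion-theoretic ``bookkeeping'' you describe as routine is fine, but the two facts you take as input are the theorem, and a correct write-up would have to supply the mechanism---effective quantifier elimination, model completeness, or a direct coordinatization argument---by which the geometric hypothesis converts definability into computability.
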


Thus, in each prototypical case of the Zilber trichotomy, if the language has a finite signature, then there are very few possible spectra. In particular, either all or no models of positive dimension have recursive presentations.

Hrushovski \cite{Udi} showed that the Zilber conjecture is false and produced a new class of strongly minimal sets, all of which have a geometric property called flatness. Formally, flatness is defined below, but intuitively it describes dimension as being purely combinatorial in a way that allows for no algebraic rules (such as associativity of a group operation) to hold:

\begin{defn}
	A theory is flat if whenever $\{E_i \mid i\in I\}$ is a finite collection of finite-dimensional closed sets, and $s$ ranges over the subsets of $I$, we have $\Sum_{s}(-1)^{\abs{s}}\dim(E_s)\leq 0$.
\end{defn}

Andrews \cite{A0n}\cite{A0w} showed that $[0,n]$, $[0,\omega)$, and $\{\omega\}$ are spectra of recursive models of flat strongly minimal theories in languages with finite signature. Thus, it is possible for a strongly minimal theory in a finite language to have some positive-dimensional models be recursive and others not.  

In this paper, we present another schema of spectra of a flat strongly minimal theory in a language with finite signature: $[0,n]\cup \{\omega\}$

We also point out the most important technical innovation in this paper. For the general technique, we code extra relations in a structure by the number of extensions of a certain type over a base. That is, in the amalgamation, we allow more or fewer occurrences of a certain extension in order to code information. This only successfully codes that information if a tuple will have the maximal number of extensions allowed. In condition (3'') on page \pageref{3''}, we see that if the base is strong enough, then it has the maximal number of realizations, but there is much room for exceptions. In the previous uses of this technique (\cite{A0n} and \cite{A0w}), there were two different tricks used to avoid these exceptions, but one only works if the size of the base of the extension is at most one more than the dimension of the prime model and the other only works if we have a bound on the size of the extensions that we will need. Neither can work in our current setting, and they are both fragile methods. In section \ref{Technicals} we present the correct solution for this problem: We present a collection of ``unblockable'' extensions which for any base whatsoever in any Hrushovski construction must have the maximal possible number of extensions. This tool should make any combination of recursion theory with Hrushovski constructions far easier in the future.

A different approach to recursion-theoretically understanding strongly minimal theories is to consider the relative complexity of models. That is, if one model of a theory $T$ is recursive, how difficult can it be to compute the other models of $T$? In the case of a disintegrated theory, every other model must be recursive in $\mathbf{0}''$ \cite{GHLLM} and there is a theory where $\mathbf{0}''$ is needed to compute the other models \cite{KLLS}. In general, the degrees which compute every countable model of a strongly minimal theory which has a recursive model are exactly the degrees which are high over $\mathbf{0}''$ \cite{ALS,AK18}. That is, exactly the degrees $\mathbf{d}$ so that $\mathbf{d} >\mathbf{0}''$ and $\mathbf{d}'\geq \mathbf{0}^{(4)}$.

\section{Background}

\subsection{Notation}

We write $\exists^{k}\bar{x} \phi(\bar{x})$ to mean that there are at least $k$ \emph{disjoint} tuples $\bar{x}$ which satisfy $\phi$. 

\subsection{Hrushovski constructions in infinite languages where $\mu$ depends on the self-sufficient closure}

Fix $\L$ a relational language with the relations indexed by a (finite or not) initial segment of $\omega$. We will enforce in our construction that each relation is symmetric (if we do not do this, the same construction works with no changes -- this is purely a stylistic choice).  

The following definitions and lemmas are standard to all Hrushovski constructions. 

\begin{defn}
	The pre-dimension function on $\L$-structures is the function $\delta$ from finite $\L$-structures to $\mathbb{Z}\cup \{-\infty\}$ so that $\delta(A)=\abs{A}-\Sigma_{R\in \L}\#R(A)$. Here $\#R(A)$ counts the number of occurrences of the relations on $A$ (counting $R(\bar{a})$ together with $R(\sigma(\bar{a}))$ as a single relation for all permutations $\sigma$).

	For $A, B\subseteq C$ with $A,B$ finite:
	\begin{itemize}
		\item We write $\delta(B/A)=\delta(A\cup B)-\delta(A)$.
		\item We define $\delta(A,C)=\inf\{\delta(D)\mid A\subseteq D\subseteq C, D\text{ finite}\}$.
		\item We say $A$ is strong (also called self-sufficient) in $C$, written $A\leq C$, if $\delta(A)=\delta(A,C)$.
		\item We say $B$ is simply algebraic over $A$ if $A\cap B=\emptyset$, $A\leq A\cup B$, $\delta(B/A)=0$, and there is no proper non-empty subset $B'$ of $B$ so that $\delta(B'/A)=0$.
		\item We say $B$ is minimally simply algebraic over $A$ if $B$ is simply algebraic over $A$ and $A$ is minimal so that $B$ is simply algebraic over $A$.
		\item If $A\subseteq B$ and $B\smallsetminus A$ is (minimally) simply algebraic over $A$, then we say $A\subseteq B$ (or $B/A$) is a (minimally) simply algebraic extension.
	\end{itemize}

	Let $\C_0$ be the collection of $\L$-structures $C$ so that $\delta(A)\geq 0$ for every finite $A\subseteq C$.
\end{defn}

\begin{obs}
	For $A,B$ finite subsets of an $\L$-structure $C$,
	$$\delta(A\cup B)\leq \delta(A)+\delta(B)-\delta(A\cap B)$$ with equality if and only if there are no relations holding between $A$ and $B$ other than those inside $A$ or those inside $B$. In this case, we say $A$ and $B$ are freely joined over $A\cap B$ and we write $A\cup B = A\oplus_{A\cap B}B$.
\end{obs}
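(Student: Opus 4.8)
The plan is to verify the inequality termwise in the definition of $\delta$, separating the contribution of the universe from the contribution of each relation. For the universe, inclusion–exclusion gives the \emph{exact} identity $\abs{A\cup B}=\abs{A}+\abs{B}-\abs{A\cap B}$. So it suffices to prove that for each relation $R\in\L$ we have
$\#R(A\cup B)\geq \#R(A)+\#R(B)-\#R(A\cap B)$,
and then to sum this over $R\in\L$ and subtract it from the universe identity; the three universe terms and the three relation sums reassemble exactly into $\delta(A)+\delta(B)-\delta(A\cap B)$, yielding the stated inequality.

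For the per-relation inequality I would fix $R$ and let $S_X$ denote the set of (orbits under coordinate permutation of) tuples from $X$ on which $R$ holds, so $\#R(X)=\abs{S_X}$. A tuple lies in $S_A\cap S_B$ precisely when all of its coordinates lie in $A$ and all lie in $B$, i.e.\ when all of its coordinates lie in $A\cap B$; hence $S_A\cap S_B=S_{A\cap B}$, and therefore $\abs{S_A\cup S_B}=\#R(A)+\#R(B)-\#R(A\cap B)$. Since every tuple in $S_A\cup S_B$ has all coordinates in $A\cup B$, we have $S_A\cup S_B\subseteq S_{A\cup B}$, so $\#R(A\cup B)=\abs{S_{A\cup B}}\geq\abs{S_A\cup S_B}=\#R(A)+\#R(B)-\#R(A\cap B)$, as required. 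Summing over $R\in\L$ and combining with the universe identity gives the inequality.

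For the equality clause, tracing the argument back shows that equality holds in the displayed inequality if and only if $\#R(A\cup B)=\#R(A)+\#R(B)-\#R(A\cap B)$ for every $R$, equivalently $S_{A\cup B}=S_A\cup S_B$ for every $R$: every tuple from $A\cup B$ on which some relation holds must be entirely contained in $A$ or entirely contained in $B$. This is exactly the assertion that the only relations holding on $A\cup B$ are those internal to $A$ or internal to $B$, with no ``mixed'' instance having a coordinate in $A\setminus B$ and another in $B\setminus A$ — i.e.\ $A$ and $B$ are freely joined over $A\cap B$. There is essentially no obstacle here; the only point needing a moment's care is the identification $S_A\cap S_B=S_{A\cap B}$, which rests on the fact that a tuple is a finite sequence of elements and that membership in $S_X$ is decided coordinatewise, so the convention of counting $R(\bar a)$ and $R(\sigma(\bar a))$ as a single instance causes no difficulty.
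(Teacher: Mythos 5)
Your proof is correct and follows the same idea as the paper, which merely notes that this is inclusion--exclusion on the number of relation instances; you have simply written out the per-relation inclusion--exclusion step and the observation that $S_A\cap S_B=S_{A\cap B}$ while $S_A\cup S_B\subseteq S_{A\cup B}$ in full detail. The equality analysis is also right, since each per-relation inequality has the same orientation, so the total is tight exactly when each is.
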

\begin{proof}
	This is just inclusion-exclusion on the number of relations holding in $A\cup B$.
	
\end{proof}

The following two lemmas capture some basic facts about the notion of strong substructure.

\begin{lem}
	If $A\leq B$, then for any $X\subseteq B$, $\delta(X\cap A)\leq \delta(X)$.
	
	If $A\leq B\leq C$, then $A\leq C$.
\end{lem}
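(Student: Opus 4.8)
The plan is to prove the two assertions in order, using the first as the main tool for the second. The only ingredients are the submodularity inequality from the Observation, $\delta(Y\cup Z)\leq \delta(Y)+\delta(Z)-\delta(Y\cap Z)$, and the defining property of strongness, namely that $A\leq B$ means $\delta(A)\leq\delta(D)$ for every finite $D$ with $A\subseteq D\subseteq B$.

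For the first assertion, fix a finite $X\subseteq B$ and set $D:=X\cup A$, which is finite and satisfies $A\subseteq D\subseteq B$. Strongness of $A$ in $B$ gives $\delta(A)\leq\delta(X\cup A)$, while submodularity applied to $X$ and $A$ gives $\delta(X\cup A)\leq \delta(X)+\delta(A)-\delta(X\cap A)$. Chaining these and cancelling the finite quantity $\delta(A)$ yields $\delta(X\cap A)\leq\delta(X)$, as desired.

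For the second assertion, fix a finite $D$ with $A\subseteq D\subseteq C$; I must show $\delta(A)\leq\delta(D)$. The subtlety is that $D$ need not lie inside $B$, so I first ``project'' it there. Applying the first assertion to the strong pair $B\leq C$ with the set $D\subseteq C$ gives $\delta(B\cap D)\leq\delta(D)$. Now $B\cap D$ is a finite subset of $B$ containing $A$ (since $A\subseteq B$ because $A\leq B$, and $A\subseteq D$ by hypothesis), so strongness of $A$ in $B$ gives $\delta(A)\leq\delta(B\cap D)$. Combining the two inequalities, $\delta(A)\leq\delta(B\cap D)\leq\delta(D)$.

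I do not anticipate any real obstacle: both parts are a couple of lines. The only place meriting a moment's care is the second part, where a finite set $D\subseteq C$ witnessing a potential failure of $A\leq C$ may stick out of $B$; the first assertion is exactly what absorbs this, letting us replace $D$ by $B\cap D$ at no cost to the pre-dimension. (If one prefers to allow infinite $A$ and $B$ and read $A\leq B$ in the localized form ``$\delta(X\cap A)\leq\delta(X)$ for every finite $X\subseteq B$'', both computations go through verbatim, now with $X$ ranging over finite subsets of $C$ and $X\cap B$ in the role of $B\cap D$.)
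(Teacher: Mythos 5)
Your proof is correct and follows essentially the same route as the paper's: the first assertion via submodularity plus the definition of strongness, and the second by applying the first assertion to $B\leq C$ to pull a finite witness $D\subseteq C$ down to $B\cap D\subseteq B$ and then invoking $A\leq B$. The only cosmetic difference is that the paper phrases the last step as an application of the first assertion to $A\leq B$ with the set $X\cap B$ (yielding $\delta(X\cap B\cap A)\leq\delta(X\cap B)$ and noting $X\cap B\cap A=A$), whereas you invoke the definition of $A\leq B$ directly; these are the same computation.
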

\begin{proof}
	Suppose $A\leq B$. Then $\delta(A)\leq \delta(X\cup A)\leq \delta(X)+\delta(A)-\delta(X\cap A)$. Thus, $\delta(X\cap A)\leq \delta(X)$. 
	
	Now suppose $A\leq B\leq C$. Let $A\subseteq X\subseteq C$. Then $\delta(X\cap B)\leq \delta(X)$ since $B\leq C$, and $\delta(A)=\delta(X\cap B\cap A)\leq \delta(X\cap B)$ since $A\leq B$. So, $\delta(A)\leq \delta(X)$.
\end{proof}

\begin{lem}\label{disjointnessfromstrength}
	Suppose $X\leq C$ and $Y,Z\subseteq C$ are distinct, simply algebraic over $X$. Then $Y$ and $Z$ are disjoint.
\end{lem}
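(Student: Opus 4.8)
The plan is to argue by contradiction: suppose $Y \cap Z \neq \emptyset$ and derive that $Y$ and $Z$ cannot both be simply algebraic over $X$ unless $Y = Z$. Set $W = Y \cap Z$; by the hypothesis $W$ is a nonempty subset of both $Y$ and $Z$. Since $Y$ is simply algebraic over $X$ we have $X \cap Y = \emptyset$ (so $W \cap X = \emptyset$), $X \le X \cup Y$, $\delta(Y/X) = 0$, and no proper nonempty subset $Y'$ of $Y$ satisfies $\delta(Y'/X) = 0$; similarly for $Z$. The key will be to play the submodularity inequality (the Observation above) against the strongness $X \le X \cup Y$ to force $\delta(W/X) = 0$, which contradicts minimality unless $W = Y$, and symmetrically $W = Z$, whence $Y = Z$.

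Here are the steps in order. First I would consider the set $X \cup W \subseteq X \cup Z$. Since $X \le X \cup Z$, applying the first displayed inequality of the previous lemma (with $B = X \cup Z$ and $X_{\text{there}} = X \cup W$) gives $\delta\big((X\cup W) \cap X\big) \le \delta(X \cup W)$, i.e. $\delta(X) \le \delta(X \cup W)$, so $\delta(W/X) \ge 0$. Second, I would bound $\delta(W/X)$ from above. Work inside $X \cup Y$: by submodularity applied to the sets $X \cup W$ and $Y$ (whose union is $X \cup Y$ and whose intersection is $W$, using $X \cap Y = \emptyset$),
\[
\delta(X \cup Y) \le \delta(X \cup W) + \delta(Y) - \delta(W).
\]
Rearranging and using $\delta(Y/X) = \delta(X\cup Y) - \delta(X) = 0$, i.e. $\delta(X \cup Y) = \delta(X) + \delta(Y)$, this becomes $\delta(X) + \delta(Y) \le \delta(X \cup W) + \delta(Y) - \delta(W)$, hence $\delta(W/X) = \delta(X \cup W) - \delta(X) \le \delta(W) - (\delta(Y) - \delta(Y))$... more cleanly: $\delta(X \cup W) - \delta(X) \le 0$ would follow if $\delta(W) \le 0$, which need not hold, so instead I would run the same submodularity argument but comparing $X \cup W$ against $X \cup Y$ directly via the chain $\delta(X\cup W) + \delta(X \cup Y) \ge \delta(X) + \delta(X \cup W \cup Y) = \delta(X) + \delta(X\cup Y)$? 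That is circular; the right move is the one-line computation $\delta(W/X) = \delta(Y/X) - \delta(Y/ X\cup W) \le 0$ once we know $\delta(Y/X\cup W)\ge 0$, which holds because $X \cup W \le X \cup Y$ — and \emph{that} follows from $X \le X\cup Y$ together with $W = Y \cap(X\cup W)$ being obtained from $X\cup W$ by the first lemma. Combining with Step one, $\delta(W/X) = 0$.

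Third, I would invoke minimality: $W$ is a nonempty subset of $Y$ with $\delta(W/X) = 0$, so by the definition of simply algebraic (no proper nonempty $Y' \subsetneq Y$ has $\delta(Y'/X) = 0$) we must have $W = Y$. The identical argument with the roles of $Y$ and $Z$ swapped gives $W = Z$. Therefore $Y = W = Z$, contradicting that $Y$ and $Z$ are distinct. I expect the main obstacle to be bookkeeping the submodularity/strongness interaction correctly — specifically establishing the intermediate strongness $X \cup W \le X \cup Y$ (or equivalently that $\delta(Y / X \cup W) \ge 0$) cleanly, since this is where one must be careful that "strong in" is inherited by the relevant intermediate set; everything else is a short inclusion-exclusion bookkeeping exercise using only the two lemmas already proved.
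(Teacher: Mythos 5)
Your Step~1 is fine: $X\le X\cup Z$ (from the definition of simply algebraic) gives $\delta(W/X)\ge 0$ for $W=Y\cap Z$. But Step~2 has a genuine gap, and it is not a matter of bookkeeping. You claim $X\cup W\le X\cup Y$, and this is simply false in the case under consideration. Indeed, if $W$ is a nonempty proper subset of $Y$, then $X\le X\cup Y$ gives $\delta(W/X)\ge 0$, and the minimality clause in ``simply algebraic'' forces $\delta(W/X)\ne 0$, so $\delta(W/X)>0$. Since $\delta(Y/X)=0$, this means $\delta(X\cup W)>\delta(X)=\delta(X\cup Y)$, so $X\cup Y$ is a superset of $X\cup W$ with strictly smaller $\delta$ --- the exact opposite of strongness. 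Equivalently, $\delta(Y/X\cup W)=\delta(Y/X)-\delta(W/X)<0$, so the inequality you want fails. Your justification (``$W=Y\cap(X\cup W)$ being obtained from $X\cup W$ by the first lemma'') does not give the implication, because the first lemma controls $\delta$ of \emph{intersections with a strong set}, not strongness of arbitrary intermediate sets.

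More fundamentally, your argument never uses the hypothesis $X\le C$; it only uses the ``local'' strongnesses $X\le X\cup Y$ and $X\le X\cup Z$ coming from the definition of simple algebraicity. But the global hypothesis $X\le C$ is what makes the lemma true, because $Y$ and $Z$ sit in the possibly larger structure $C$ and may be related in $C$ in ways that $X\cup Y$ and $X\cup Z$ alone do not see. The paper's proof applies submodularity to $X\cup Y$ and $X\cup Z$ (intersection $X\cup W$, union $X\cup Y\cup Z$) to get $\delta(Y\cup Z/X)\le -\delta(W/X)$; then it uses $X\le C$ to get $\delta(Y\cup Z/X)\ge 0$, hence $\delta(W/X)\le 0$. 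Combined with your Step~1 bound $\delta(W/X)\ge 0$ and minimality, this yields $W=\emptyset$ or $W=Y=Z$, finishing the proof. So the missing ingredient in your write-up is precisely this use of $X\le C$ in place of the false claim $X\cup W\le X\cup Y$; once you replace Step~2 with that, the rest of your outline goes through.
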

\begin{proof}
	Since $\delta(X\cup Y \cup Z)\leq \delta(X\cup Y)+\delta(X\cup Z)-\delta(X\cup (Y\cap Z))$, we can subtract $\delta(X)$ from both sides and see that $\delta(Y\cup Z/X)\leq \delta(Y/X)+\delta(Z/X)-\delta(Y\cap Z/X)=-\delta(Y\cap Z/X)$. Since $Y$ and $Z$ are simply algebraic over $X$, either $Y\cap Z=\emptyset$ or $\delta(Y\cap Z/X)>0$. But since $X\leq C$, $0\leq \delta(Y\cup Z/X)=-\delta(Y\cap Z/X)$. So $Y$ and $Z$ are disjoint.
\end{proof}

\begin{defn}
	For $C\in \C_0$ and $A\subseteq C$ finite, the self-sufficient closure of $A$ is the smallest set $X$ so that $A\subseteq X\leq C$.
\end{defn}

\begin{lem}\label{ssclosurealg}
	For any finite $A\subseteq C\in \C_0$, the self-sufficient closure exists, is unique, and is finite.
\end{lem}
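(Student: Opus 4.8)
The statement to prove is Lemma \ref{ssclosurealg}: for any finite $A \subseteq C \in \C_0$, the self-sufficient closure of $A$ exists, is unique, and is finite. The plan is to produce this closure as the union of a chain obtained by repeatedly "fixing" failures of strength, and to bound the process using the pre-dimension.

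First I would observe that uniqueness is essentially automatic: if $A \subseteq X \leq C$ and $A \subseteq Y \leq C$, one shows $X \cap Y \leq C$ (using the first displayed inequality from the lemma after Observation: if $X \leq C$ then $\delta(X \cap Z) \le \delta(Z)$ for all $Z \subseteq C$, applied with $Z$ ranging over sets containing $X \cap Y$), so a smallest such set, if it exists, is unique; moreover the intersection of all strong supersets of $A$ is again strong, which would give existence and uniqueness simultaneously provided we know at least one \emph{finite} strong superset exists. So the real content is finiteness of \emph{some} strong superset, and then taking the intersection of all strong supersets (a subset of that finite one) gives the closure.

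For finiteness, the key point is that $\delta(A, C) = \inf\{\delta(D) \mid A \subseteq D \subseteq C,\ D \text{ finite}\}$ is a finite number: it is an infimum of integers, and it is bounded below by $0$ since $C \in \C_0$, hence it is attained by some finite $D_0$ with $A \subseteq D_0 \subseteq C$ and $\delta(D_0) = \delta(A, C)$. Among all finite sets realizing this minimum, choose one, call it $X$; I claim $X \leq C$. Indeed, for any finite $D$ with $X \subseteq D \subseteq C$ we have $\delta(D) \geq \delta(A,C) = \delta(X)$ by minimality, which is exactly $\delta(X) = \delta(X, C)$, i.e. $X \leq C$. So a finite strong superset of $A$ exists. (One should take $X$ minimal in size among minimizers to make the intersection argument cleanest, but it is not strictly needed.)

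Then existence and uniqueness of the \emph{smallest} strong superset follow: let $\mathcal{F}$ be the family of all $Z$ with $A \subseteq Z \leq C$; it is nonempty by the previous paragraph, and I would show it is closed under pairwise (hence finite) intersection via the inequality noted above — if $Y, Z \leq C$ then for any finite $W$ with $Y \cap Z \subseteq W \subseteq C$, applying $\delta(W \cap Y) \le \delta(W)$ ($Y \leq C$) and then $\delta((W \cap Y)\cap Z) \le \delta(W \cap Y)$ ($Z \le C$) gives $\delta(Y \cap Z) \le \delta(W)$, so $Y \cap Z \leq C$. Since every member of $\mathcal{F}$ that is contained in the finite set $X$ has finitely many candidates, the intersection $X' = \bigcap (\mathcal{F} \cap \pset{X})$ is a finite member of $\mathcal{F}$, and it is contained in every member of $\mathcal{F}$ (intersect any $Z \in \mathcal F$ with $X$ first). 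That $X'$ is the desired self-sufficient closure. The main obstacle is really just getting the intersection-closure inequality bookkeeping right; the finiteness is immediate once one notices $\delta(A,C)$ is an attained integer infimum bounded below by $0$.
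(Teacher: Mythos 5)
Your proof is correct and rests on the same two pillars as the paper's: finiteness/existence comes from $\delta$ being a $\Z_{\geq 0}$-valued function on finite sets (so the infimum $\delta(A,C)$ is attained by some finite $X$, which is automatically strong), and uniqueness comes from the submodularity of $\delta$. The only difference is packaging of the uniqueness step — the paper takes $X$ to be size-minimal among $\delta$-minimizers and shows directly via $\delta(X\cup Y)\leq\delta(X)+\delta(Y)-\delta(X\cap Y)$ that two such sets cannot differ, while you route through the fact that strong subsets of $C$ are closed under intersection (itself a corollary of the same submodularity lemma stated just before) and then take the intersection of all strong supersets lying inside $X$; both are standard and equivalent in substance.
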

\begin{proof}
	Take $X\supseteq A$ with minimal $\delta(X)$ and take $X$ minimal as such (i.e. it has no proper subset containing $A$ with the same value of $\delta$). The minimality of $\delta(X)$ implies that $X\leq C$. Suppose $X$ were not unique, then there would be another such set $Y\supseteq A$ with $\delta(Y)=\delta(X)$. Then $\delta(X\cup Y)\leq \delta(X)+\delta(Y)-\delta(X\cap Y)<\delta(Y)=\delta(X)$ by minimality of the set $X$. But this would contradict the minimality of $\delta(X)$. 
\end{proof}

The following definitions are necessary to do the Hrushovski construction with an infinite signature. We will build our theory by using an infinite signature and then taking a reduct to a finite sub-signature.

\begin{defn}\label{def:oftheform}
	For any $\L$-structures $\bar{a},\bar{b}\subseteq C$,  the relative quantifier-free type of $\bar{b}$ over $\bar{a}$, written $\tprqf(\bar{b}/\bar{a})$, is the set of formulas $\{R(\bar{x}_i,\bar{y}_i)\mid (\bar{b}_i\bar{a}_i)\subseteq (\bar{b}\cup \bar{a})^{\arity(R)}\smallsetminus \bar{a}^{\arity(R)}, R\in \L, C\models R(\bar{b}_i,\bar{a}_i) \}$.
	
	Fix $\mu(A,B,m)$ to be a function that takes in pairs of $\L$-structures so that $A\subset B$ is a minimally simply algebraic extension, and a number $m\in \omega\cup\{\infty\}$, and $\mu$ outputs a number in $\omega$ so that $\mu(A,B,m)\geq \delta(A)$.
	
	We also require $\mu$ to satisfy: For every relative quantifier-free type $\Psi$ of a minimally simply algebraic extension, there is a finite sublanguage $\L'\subseteq \L$ so that $\tp_{q.f.}(A)\vert \L'=\tp_{q.f.}(A')\vert \L'$ and $\tprqf(B/A)=\tprqf(B'/A')=\Psi$ implies $\mu(A,B,m)=\mu(A',B',m)$. Further, for every $A,B$, we must have $\lim_{m\rightarrow \infty} \mu(A,B,m)=\mu(A,B,\infty)$.

	For any $A\subseteq C$, we let $g_C(A)$ be the least $m$ so that there exists an $X\subseteq C$ so that $A\not\leq X$ which is witnessed by using only the first $m$ relations in the language $\L$ and $\abs{X}\leq \abs{A}+m$. If there is no such $m$, then $A\leq C$ and we let $g_C(A)=\infty$.
	
	We have chosen to present the definition of $\mu$ in terms of an extension $A\subseteq B$. For $A,B\subseteq C$ with $A,B$ finite, if $B$ is minimally simply algebraic over $A$ (in particular, $B$ is disjoint from $A$), then we also write $\mu(A,B,m)$ for $\mu(A,A\cup B, m)$.

\end{defn}

The following observation is a critical fact about how the function $g$ behaves.
\begin{obs}\label{gSameStrong}
	If $A\subseteq B\leq C$, then $g_B(A)=g_C(A)$.
\end{obs}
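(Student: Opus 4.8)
The plan is to unwind the definition of $g$ into a statement about which indices $m$ admit a ``witness'' and then to check that this set of indices does not change when we pass between $B$ and $C$. Say that a finite set $X$ is an \emph{$m$-witness inside $D$} (for $D$ one of $B$, $C$) if $A\subseteq X\subseteq D$, $\abs{X}\leq\abs{A}+m$, and $A\not\leq X$ is seen using only the first $m$ relations of $\L$ --- concretely, there is a finite $E$ with $A\subseteq E\subseteq X$, $\delta(E)<\delta(A)$, and no relation of index $\geq m$ occurring on $E$. Then $g_D(A)$ is precisely the least $m$ for which an $m$-witness inside $D$ exists, with value $\infty$ when there is none. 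So it suffices to show that for every $m\in\w$ there is an $m$-witness inside $B$ if and only if there is one inside $C$.

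First I would dispatch the easy implication: if $X$ (with its internal $E$) is an $m$-witness inside $B$, then since $B\subseteq C$ it is literally also an $m$-witness inside $C$, because every clause --- $A\not\leq X$, $\delta(E)<\delta(A)$, only the first $m$ relations occurring on $E$, and $\abs{X}\leq\abs{A}+m$ --- refers only to $X$ and its induced $\L$-structure, which is unaffected by whether we regard $X$ as sitting in $B$ or in $C$. This already gives $g_C(A)\leq g_B(A)$ with no use of the hypothesis.

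For the reverse implication I would use $B\leq C$ through the basic lemma on strong substructures, namely that $\delta(Y\cap B)\leq\delta(Y)$ whenever $B\leq C$ and $Y\subseteq C$. Given an $m$-witness $X$ inside $C$ with internal $E$, put $E':=E\cap B$. Since $A\subseteq B$ and $A\subseteq E$ we have $A\subseteq E'\subseteq B$; the lemma applied to $B\leq C$ and $E\subseteq C$ yields $\delta(E')\leq\delta(E)<\delta(A)$, so $A\not\leq E'$, and this is witnessed by $E'$ itself, which --- being an induced substructure of $E$ --- still carries only the first $m$ relations; finally $\abs{E'}\leq\abs{E}\leq\abs{X}\leq\abs{A}+m$. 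Thus $E'$ is an $m$-witness inside $B$, so $g_B(A)\leq m$; taking $m=g_C(A)$ when the latter is finite gives $g_B(A)\leq g_C(A)$, and together with the previous paragraph this gives equality. The remaining case $g_C(A)=\infty$ means $A\leq C$, and since $A\subseteq B\leq C$ we then also get $A\leq B$ (from $\delta(A)=\delta(A,C)\leq\delta(A,B)\leq\delta(A)$), hence $g_B(A)=\infty$ as well.

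I do not expect a real obstacle; the one place to be careful is the reverse implication, where one must push the \emph{internal} witness $E$ down into $B$ by intersecting with $B$ --- intersecting the ambient set $X$ with $B$ need not retain a suitable subset --- and then invoke $B\leq C$ exactly once to know that $Y\mapsto Y\cap B$ does not increase $\delta$. Everything else (the cardinality bound and the ``first $m$ relations'' clause for $E'$) is immediate from $E'\subseteq E$.
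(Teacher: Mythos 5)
Your proof is correct and follows essentially the same route as the paper's: the easy inclusion $B\subseteq C$ gives $g_C(A)\le g_B(A)$ outright, and for the reverse inequality you push a witness down into $B$ by intersecting with $B$ and invoking the basic strong-substructure lemma $\delta(Y\cap B)\le\delta(Y)$ for $B\le C$, exactly as the paper does (the infinite case you treat separately is in fact already subsumed by the easy direction). The only point to double-check is your unwinding of ``witnessed by using only the first $m$ relations'': you require that no relation of index $\ge m$ literally occurs on the internal witness $E$, whereas the paper's proof computes $\delta$ on the reduct to the sublanguage $\L_0$ of the first $m$ symbols, which is weaker (high-index edges may be present on $X$ but are simply discounted). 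Under the paper's reading one should set $E'=X\cap B$ and run the strong-substructure inequality inside the reduct, which additionally uses the easy fact -- also left implicit in the paper -- that $B\le C$ in $\L$ implies $B\vert\L_0\le C\vert\L_0$ for any sublanguage $\L_0$, since dropping a relation symbol raises $\delta$ by the number of its occurrences and this increment is monotone under inclusion. With that one-line observation your argument is the paper's argument.
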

\begin{proof}
	That $g_C(A)\leq g_B(A)$ is immediate since any $X\subseteq B$ so that $A\subseteq X$ and $\delta(X)<\delta(A)$ is also a subset of $C$. Suppose $X\subseteq C$ is so that $A\subseteq X$ and $\L_0\subseteq \L$ is a sublanguage so that restricting to this sublanguage, $\delta(X\vert \mathcal{L}_0)<\delta(A\vert\mathcal{L}_0)$. Then $\delta(X\cap B)\leq \delta(X)< \delta(A)$, so in the same sublanguage, $B$ contains a set no larger than $X$ witnessing that $A$ is not strong. So $g_B(A)\leq g_C(A)$.
\end{proof}

\begin{defn}
	Let $Y$ and $X$ be finite $\L$-structures so that $Y$ is minimally simply algebraic over $X$.
	
	We define $\L_{Y/X}$ to be the finite collection of symbols occurring in $\tprqf(Y/X)$.
	
	Suppose $B$ and $A$ are finite $\L$-structures such that $\tprqf(B/A) \supseteq \tprqf(Y/X)$ and $\tp_{q.f.}(X)=\tp_{q.f.}(A)$, then we say the extension $B$ over $A$ is of the form of $Y$ over $X$.
\end{defn}

\begin{defn}
	
	Let $\C_{\mu}$ be the collection of finite $\L$-structures $C \in \C_0$ that satisfy:
	
	Suppose $A,B^1,\ldots,B^r$ are disjoint subsets of $C$ so that each $B^i/A$ is of the form of $Y/X$, then $r\leq \mu(X,Y,g_C(A))$.
	
	For any $\forall$-axiomatizable elementary property $\zeta$ which is preserved by free joins, let $\C_{\mu}^{\zeta}$ be the collection of $C\in \C_{\mu}$ so that $\C\models \zeta$.
	
	Note that for trivial $\zeta$, $\C_{\mu}^\zeta = \C_{\mu}$.
\end{defn}

In most uses of the amalgamation method, we use the class $\C_{\mu}$, but it requires very little extra work to include the generality of working with $\C_{\mu}^\zeta$ and it will make our construction of a strongly minimal theory $T$ with $\SRM(T)=[0,n]\cup\{\omega\}$ slightly cleaner.

\begin{obs}\label{firstorderoftheform}
	Fix $Y/X$ a minimally simply algebraic extension. There is a first-order formula which is true in any $C\in \C^\zeta_\mu$ and implies that $C$ does not contain disjoint subsets  $A,B^1,\ldots,B^r$ of the form of $Y/X$ with $r>\mu(X,Y,g_C(A))$.
	
\end{obs}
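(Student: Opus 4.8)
\emph{Proof idea.} The plan is to write $\phi$ as a finite conjunction, one clause for each value of $g_C(A)$ small enough to matter, exploiting the convergence requirement on $\mu$ to keep the list finite. The one delicate point is that the phrase ``of the form of $Y/X$'' refers to the \emph{full} quantifier-free type of the base, hence a priori to infinitely many relation symbols, and this must be replaced by agreement on a suitable finite sublanguage. Concretely: using $\lim_{m\to\infty}\mu(X,Y,m)=\mu(X,Y,\infty)$, fix $m_0\in\w$ with $\mu(X,Y,m)=\mu(X,Y,\infty)=:n$ for all $m\geq m_0$. Writing $\Psi:=\tprqf(Y/X)$, let $\L'$ be the finite sublanguage given by the definition of $\mu$ for $\Psi$, so that $\tp_{q.f.}(A)\vert\L'=\tp_{q.f.}(A')\vert\L'$ together with $\tprqf(B/A)=\tprqf(B'/A')=\Psi$ forces $\mu(A,B,m)=\mu(A',B',m)$ for each $m\in\w\cup\{\infty\}$. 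Let $\L_0\subseteq\L$ be a finite sublanguage containing $\L'$, $\L_{Y/X}$, and every relation symbol realized in $X$. For each fixed $j$, the relation ``$g_C(A)\leq j$'' holds iff there are at most $j$ further elements $\bar d$ with $\delta\big((A\cup\bar d)\vert\L_{\leq j}\big)<\delta(A\vert\L_{\leq j})$, where $\L_{\leq j}$ is the sublanguage of the first $j$ relations; this involves only finitely many variables and finitely many finite-arity symbols, so it is first-order, and hence so are ``$g_C(A)=j$'' and ``$g_C(A)\geq m_0$''. Likewise, over tuples of fixed length, ``$\tprqf(B^i/A)\supseteq\Psi$'' mentions only symbols of $\L_{Y/X}$ and ``$\tp_{q.f.}(A)\vert\L_0=\tp_{q.f.}(X)\vert\L_0$'' only symbols of $\L_0$, so these are first-order too. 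Let $\phi$ be the conjunction, over $j\in\{0,\dots,m_0-1\}$, of the sentence asserting that $C$ contains no pairwise disjoint $A,B^1,\dots,B^{\mu(X,Y,j)+1}$ with $\tprqf(B^i/A)\supseteq\Psi$ for all $i$, with $\tp_{q.f.}(A)\vert\L_0=\tp_{q.f.}(X)\vert\L_0$, and with $g_C(A)=j$; together with the analogous sentence using ``$g_C(A)\geq m_0$'' and $n+1$ many $B^i$. This $\phi$ is first-order.

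That $\phi$ implies the nonexistence of the configuration in the statement is immediate: given pairwise disjoint $A,B^1,\dots,B^r$ of the form of $Y/X$ with $r>\mu(X,Y,g_C(A))$, discard all but $\mu(X,Y,g_C(A))+1$ of the $B^i$; the remaining configuration still has each $B^i/A$ of the form of $Y/X$ and the same value $g_C(A)$, so $\tp_{q.f.}(A)=\tp_{q.f.}(X)$ (in particular on $\L_0$), and $g_C(A)$ is either some $j<m_0$ or is $\geq m_0$ (allowing $g_C(A)=\infty$), which contradicts the matching conjunct of $\phi$.

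The substantive part — and the step I expect to be the main obstacle — is verifying that $\phi$ holds throughout $\C^\zeta_\mu$. The danger is a configuration satisfying some conjunct's hypotheses whose base $A$ carries relations outside $\L_0$: then $A$ is not literally ``of the form of $X$'', so membership in $\C_\mu$ does not visibly forbid it. I would resolve this by re-reading the configuration over the structure $X':=A$ taken with all of its relations. Place a fresh copy $Y'$ of $Y$ over $X'$ with $\tprqf(Y'/X')=\Psi$; since the relations within $Y'$ and between $Y'$ and $X'$ are prescribed by $\Psi$ exactly as in $Y/X$, one checks $\delta(Y''/X')=\delta(Y''/X)$ for every $Y''\subseteq Y$, whence $X'\leq X'\cup Y'$, $\delta(Y'/X')=0$, and $Y'/X'$ is again a minimally simply algebraic extension. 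Each $B^i/A$ is then of the form of $Y'/X'$ (as $\tprqf(B^i/A)\supseteq\Psi=\tprqf(Y'/X')$ and $\tp_{q.f.}(A)=\tp_{q.f.}(X')$ by construction), so $C\in\C_\mu$ gives $r\leq\mu(X',Y',g_C(A))$. Finally $X'$ and $X$ agree on $\L'\subseteq\L_0$ and $\tprqf(Y'/X')=\tprqf(Y/X)=\Psi$, so the defining clause of $\mu$ yields $\mu(X',Y',g_C(A))=\mu(X,Y,g_C(A))$, which equals $\mu(X,Y,j)$ when $g_C(A)=j<m_0$ and equals $n$ when $g_C(A)\geq m_0$ (including $g_C(A)=\infty$), by the choice of $m_0$; either way this contradicts the count $r=\mu(X,Y,j)+1$, respectively $r=n+1$, built into $\phi$. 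Passing from $\C_\mu$ to $\C^\zeta_\mu$ requires nothing further, since $\phi$ never mentions $\zeta$.
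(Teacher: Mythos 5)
Your proof is correct and follows essentially the same route as the paper: both build a formula that quantifies over bases agreeing with $X$ on a finite sublanguage, case-split on $g_C(A)$ up to the threshold where $\mu$ stabilizes, and cap the count of disjoint extensions carrying $\tprqf(Y/X)$. The extra step you spell out — re-reading such a configuration as instances of $Y'/X'$ with $X'=A$ (checking $Y'/X'$ is still minimally simply algebraic and $\mu(X',Y',\cdot)=\mu(X,Y,\cdot)$) — is exactly the content of the paper's terse remark that $\theta$ enforces the $\mu$-bound for every extension in the family $\Omega$.
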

\begin{proof}
	Let $\L'$ be the finite sublanguage of $\L$ guaranteed in Definition \ref{def:oftheform}, and let $m$ be an integer so that $\mu(X,Y,k)=\mu(X,Y,\infty)$ for any $k\geq m$.
	
	Let $\rho(A)$ say that $A\vert \L'\cong X\vert \L'$. For each $k\in \omega$, let $\psi_k(A,C^1,\ldots C^k)$ say that the sets are disjoint, and that $\tprqf(C^j/A)\supseteq \tprqf(Y/X)$ for each $j$. For each $l\leq m$, let $\phi_l$ say that $g_C(A)=l$. Finally, let $\theta$ be the formula which says $\forall A$, if $\rho(A)$ holds, then
	\[
	\bigvee_{l<m}\left( \phi_l(A)\wedge \neg \exists Z \psi_{\mu(X,Y,l)+1}(A,Z)\right)\vee \left(\bigwedge_{l<m}\neg \phi_l(A)\wedge \neg \exists Z \psi_{\mu(X,Y,\infty)+1}(A,Z)\right).
	\]
	Let $\Omega$ be the collection of all the extensions $Y'/X'$ so that $\tprqf(Y'/X')=\tprqf(Y/X)$ and $X'\vert \L'\cong X\vert \L'$. Then $\theta$ says that the $\mu$-bound is obeyed for each extension in $\Omega$. Thus $\theta$ is true in every $C\in \C^\zeta_{\mu}$ and implies that $C$ respects the $\mu$-bound for $Y/X$.
\end{proof}

The following observation follows directly from the definition of being of the form of $Y/X$.

\begin{obs}\label{removerelationsstayoftheform}
	Let $A\subseteq B$ be finite $\L$-structures. Let $A'\subseteq B'$ be formed by removing some occurrence $R(\bar{b})$ of some relation $R$ from $B$. If $B'/A'$ is of the form of $Y/X$, then either $B/A$ is of the form of $Y/X$ or $\bar{b}\subseteq A$.
\end{obs}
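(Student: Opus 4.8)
The plan is to unwind the definition of "of the form of $Y/X$" and track which relations are affected by deleting a single occurrence $R(\bar b)$ of a relation. Recall that $B'/A'$ being of the form of $Y/X$ means two things: first, $\tp_{q.f.}(X) = \tp_{q.f.}(A')$, and second, $\tprqf(B'/A') \supseteq \tprqf(Y/X)$, i.e., every relation of $\tprqf(Y/X)$ — equivalently every relation symbol among $\L_{Y/X}$ witnessed on a tuple meeting $Y$ — is mirrored by an actual relation holding in $B'$ on the corresponding tuple that meets $B' \setminus A'$. Since $A'$ and $B'$ differ from $A$ and $B$ only by the single occurrence $R(\bar b)$, and set-theoretically $A' = A$ and $B' = B$ as underlying sets, the hypothesis $\tp_{q.f.}(X) = \tp_{q.f.}(A')$ gives $\tp_{q.f.}(X) = \tp_{q.f.}(A)$ automatically unless the deleted occurrence $R(\bar b)$ lay entirely inside $A$, i.e. $\bar b \subseteq A$.

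So assume $\bar b \not\subseteq A$. Then deleting $R(\bar b)$ does not change the quantifier-free type of $A$, and it only removes a relation that is part of $\tprqf(B/A)$ (a relation meeting $B \setminus A$). The task is to show $\tprqf(B/A) \supseteq \tprqf(Y/X)$. We already have $\tprqf(B'/A') \supseteq \tprqf(Y/X)$, and $\tprqf(B/A) \supseteq \tprqf(B'/A')$ because adding back the occurrence $R(\bar b)$ — which meets $B \setminus A$ — only enlarges the relative quantifier-free type. Hence $\tprqf(B/A) \supseteq \tprqf(Y/X)$ as well, and since $\tp_{q.f.}(A) = \tp_{q.f.}(X)$, the extension $B/A$ is of the form of $Y/X$. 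This gives the dichotomy: either $\bar b \subseteq A$, or $B/A$ is of the form of $Y/X$.

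The only subtlety — and the step I would be most careful about — is the bookkeeping in the symmetric-relations convention: an occurrence $R(\bar b)$ is identified with all $R(\sigma(\bar b))$, and "$\bar b \subseteq A$" should be read as "the whole orbit of $\bar b$ lies in $A$." As long as one consistently reads occurrences up to permutation (as the paper does in its definition of $\#R$ and $\tprqf$), the argument above is just monotonicity of $\tprqf(\cdot/\cdot)$ under adding relations that meet the "new" part, plus the observation that the only way the base type can be disturbed is if the deleted relation was internal to the base. No calculation is needed beyond this case split.
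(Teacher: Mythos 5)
Your proof is correct and takes the only natural route: the paper itself offers no written proof, merely remarking that the observation ``follows directly from the definition of being of the form of $Y/X$,'' and your two-case unwinding is exactly the content of that remark. The key observations---that $\tp_{q.f.}(A')=\tp_{q.f.}(A)$ and $\tprqf(B/A)\supseteq\tprqf(B'/A')$ whenever $\bar b\not\subseteq A$, since the removed tuple then witnesses a relation meeting $B\smallsetminus A$---are precisely what is needed, and your caveat about the symmetric-occurrence convention is appropriate but does not change the argument.
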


The following three Lemmas allow us to perform ``strong amalgamation'' on the class $\C^\zeta_{\mu}$, which will lead to a generic structure. The theory of this generic will be our strongly minimal theory.

\begin{lem}\label{Lemma42}
	Suppose $A,B_1,B_2\in \C_0$, $A=B_1\cap B_2$, and $A\leq B_1$. Let $E=B_1\oplus_A B_2$. Suppose $F,C^1,\ldots C^r$ are disjoint substructures of $E$ such that each $C^i$ is minimally simply algebraic over $F$. 
	Then one of the following holds:
	\begin{itemize}
		\item One of the $C^i$ is contained in $B_1\smallsetminus A$ and $F\subseteq A$.
		\item $F\cup \bigcup_{i\leq r} C^i$ is contained either entirely in $B_1$ or entirely in $B_2$.
		\item $r\leq \delta(F)$
		\item For one $C^i$, setting $Z=(F\cap A)\cup (C^i\cap B_2)$, $\delta(Z/Z\cap A)<0$. Further, one of the $C^j$ is entirely contained in $B_1\smallsetminus A$. (Note that this cannot happen if $A\leq B_2$.)
	\end{itemize}

\end{lem}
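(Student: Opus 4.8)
The plan is a case analysis driven by the behaviour of $\delta$ under a free join, organised by where the base $F$ sits relative to $E=B_1\oplus_A B_2$. The workhorse is the \emph{splitting identity}: for finite $X\subseteq E$, writing $X_j=X\cap B_j$ and $X_0=X\cap A$, the absence of relations between $B_1\setminus A$ and $B_2\setminus A$ gives, by inclusion--exclusion on points and on relations, $\delta(X)=\delta(X_1)+\delta(X_2)-\delta(X_0)$; hence for $F\subseteq D\subseteq E$ the relative pre-dimension $\delta(D/F)$ splits as the sum of the corresponding relative pre-dimensions computed inside $B_1$, $B_2$ and $A$. Two immediate consequences: since $A\leq B_1$ gives $\delta(X_0)\leq\delta(X_1)$, we get $\delta(X)\geq\delta(X_2)\geq 0$, so $E\in\C_0$ and in particular $\delta(F)\geq 0$; and if $C\subseteq B_1$ then $\delta(C/F)=\delta(C/F_1)$.

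I would then record three structural facts. \emph{Fact (a)}: if $C^i\subseteq B_1$ then, by the displayed consequence, $\delta(C^i/F_1)=0$, and a short calculation with the splitting identity together with $F\leq F\cup C^i$ shows $\delta(D)\geq\delta(F_1)$ for every $F_1\subseteq D\subseteq F_1\cup C^i$; thus $C^i$ is simply algebraic over $F_1$, and the minimality of the base forces $F=F_1\subseteq B_1$ (the mirror statement for $B_2$ needs no strength hypothesis). \emph{Fact (b)}: call $C^i$ \emph{split} if it meets both $B_1\setminus A$ and $B_2\setminus A$; then $C^i\cap B_1$ and $C^i\cap B_2$ are proper nonempty subsets of $C^i$, so $F\leq F\cup C^i$ and minimal simple algebraicity force $\delta(C^i\cap B_j/F)>0$ for $j=1,2$ while $\delta(C^i/F)=0$, and feeding these through the splitting identity and cancelling shows that for each $j$ the relative pre-dimension, computed in $B_j$, of $C^i\cap B_j$ over $(F\cap B_j)\cup(C^i\cap A)$ is strictly negative. \emph{Fact (c)}: if $F\cap B_1\subseteq A$ then no $C^i$ is split, for otherwise Fact (b) with $j=1$ reads $\delta\big((F\cap A)\cup(C^i\cap B_1)\big)<\delta\big((F\cap A)\cup(C^i\cap A)\big)$, and since the left-hand set lies in $B_1$ and meets $A$ exactly in the right-hand set, this contradicts $A\leq B_1$. (Fact (c) is the point at which the one-sidedness of the hypothesis matters --- it is why the fourth alternative can occur at all; when $A\leq B_2$ as well, the symmetric computation rules out the fourth alternative entirely.)

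With these in hand the cases sort themselves out by the position of $F$. If $G:=F\cup\bigcup_i C^i$ lies in $B_1$ or in $B_2$ we are in the second alternative; so suppose not. If $F\subseteq B_2$ but $F\not\subseteq A$, then Fact (c) kills all split $C^i$ and Fact (a) kills all $C^i\subseteq B_1$, so $G\subseteq B_2$ --- back in the second alternative. If $F\subseteq A$, then Fact (c) puts every $C^i$ inside $B_1$ or inside $B_2$; a $C^i\subseteq B_1\setminus A$ is the first alternative, and in its absence a pre-dimension count (projecting the union of the $C^i\cap B_1$ down to $A$ via $A\leq B_1$) lands us in the second or third alternative. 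If $F$ straddles, meeting both $B_1\setminus A$ and $B_2\setminus A$, then by Fact (a) every $C^i$ is split and the count gives $r\leq\delta(F)$. Finally if $F\subseteq B_1$ but $F\not\subseteq A$: here $F\cap B_2=F\cap A$ and Fact (a) forbids $C^i\subseteq B_2$, so each $C^i$ is in $B_1$ or split; if none is split then $G\subseteq B_1$; if some $C^i$ is split then Fact (b) with $j=2$ is exactly $\delta(Z/Z\cap A)<0$ for $Z=(F\cap A)\cup(C^i\cap B_2)$, so we are in the fourth alternative provided some $C^j\subseteq B_1\setminus A$, and if there is no such $C^j$ then every $C^j\subseteq B_1$ meets $A$ and the count again gives $r\leq\delta(F)$.

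The conceptual core is Facts (b) and (c) --- the splitting identity tamed by $F\leq F\cup C^i$ and by $A\leq B_1$. I expect the bulk of the actual work, and the main obstacle to a clean write-up, to be the several ``pre-dimension counts'' invoked above that conclude $r\leq\delta(F)$ in the mixed configurations: there one must keep careful track of which of $F,C^1,\dots,C^r$ are forced to be pairwise disjoint (as in Lemma~\ref{disjointnessfromstrength}), over exactly which base each relative pre-dimension is being measured, and how $A\leq B_1$ lets one replace a union of the $C^i\cap B_1$ by its trace in $A$ without losing too much. It is this bookkeeping, not any individual inequality, that is delicate.
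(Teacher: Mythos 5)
Your overall architecture---the splitting identity $\delta(X)=\delta(X\cap B_1)+\delta(X\cap B_2)-\delta(X\cap A)$, the classification of each $C^i$ by whether it lies in $B_1$, in $B_2$, or is ``split,'' Fact~(c), and the observation that the fourth alternative is exactly Fact~(b) with $j=2$---is the right skeleton for this lemma (the paper itself gives no proof, only citing Hrushovski and the first author's thesis). But \emph{Fact~(a) is false}, and its failure is not a bookkeeping nuisance: it is load-bearing at two points in your case analysis, and I do not see how to route around it without a genuinely different argument. The ``displayed consequence'' $\delta(C/F)=\delta(C/F_1)$ for $C\subseteq B_1$ already fails as soon as there are relations joining $C\cap A$ to $F\cap(B_2\setminus A)$: the splitting identity gives $\delta(C/F)=\delta(C/F_1)+\delta\bigl(C\cap A/F\cap B_2\bigr)-\delta\bigl(C\cap A/F\cap A\bigr)$, and the last two terms do not cancel when such cross-relations exist. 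Worse, the asserted \emph{mirror} statement (``if $C^i\subseteq B_2$ then $F\subseteq B_2$, with no strength hypothesis'') is simply untrue, and here is a concrete counterexample within the hypotheses of the lemma. Take $A=\{a_1,a_2\}$; $B_1=A\cup\{f\}$ with the single edge $R(f,a_1,a_2)$, so $\delta(B_1)=2=\delta(A)$ and $A\leq B_1$; and $B_2=A\cup\{e_1,e_2\}$ with edges $R(a_1,a_2,e_1)$, $R(a_1,e_1,e_2)$, $R(a_2,e_1,e_2)$, so $B_2\in\C_0$ but $A\not\leq B_2$. In $E=B_1\oplus_A B_2$, set $F=\{f\}$ and $C^1=\{a_1,a_2,e_1,e_2\}$. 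One checks directly that $\delta(C^1/F)=4-4=0$, that $F\leq F\cup C^1$, that every proper nonempty $B'\subset C^1$ has $\delta(B'/F)>0$, and that $\emptyset$ does not work as a base; so $C^1$ is minimally simply algebraic over $F$. Yet $C^1\subseteq B_2$ while $F\subseteq B_1\setminus A$, so $F\not\subseteq B_2$. (The lemma's conclusion is saved only by the third bullet: $r=1=\delta(F)$.)

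This breaks your final case ``$F\subseteq B_1$, $F\not\subseteq A$'' outright: you invoke the mirror of Fact~(a) to \emph{forbid} $C^i\subseteq B_2$, but the example above realises exactly that configuration, and your subsequent analysis (``each $C^i$ is in $B_1$ or split'') never sees it. In the earlier case ``$F\subseteq B_2$, $F\not\subseteq A$'' the conclusion you want is in fact correct, but the route through Fact~(a) is still wrong; what saves it there is a different argument that genuinely uses $A\leq B_1$: if $C^i\subseteq B_1$ meets both $A$ and $B_1\setminus A$, then splitting $0=\delta(C^i/F)=\delta(C^i\cap A/F)+\delta\bigl(C^i\cap(B_1\setminus A)\mid F_1\cup(C^i\cap A)\bigr)$ and simple algebraicity force $\delta\bigl(C^i\cap(B_1\setminus A)\mid F_1\cup(C^i\cap A)\bigr)<0$, while $F_1\cup(C^i\cap A)\subseteq A$ (because here $F_1=F\cap A$) and $A\leq B_1$ force it to be $\geq 0$ --- a contradiction. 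The key point is that this argument is visibly one-sided in $A\leq B_1$, so the claim that ``the mirror needs no strength hypothesis'' cannot be right. To repair the proof you must drop Fact~(a) entirely, and in the case $F\subseteq B_1$, $F\not\subseteq A$ actually run a pre-dimension count that yields $r\leq\delta(F)$ even when some $C^i$'s lie in $B_2$ and straddle $A$; this is where the real work of the lemma (and, I suspect, of Hrushovski's Lemma~3) lives, and your sketch currently excuses itself from it by appeal to a false intermediate fact.
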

\begin{proof}
	A careful reading of Lemma 3 of \cite{Udi} will show that this is what is proved there. The full proof appears as Lemma 42 in \cite{AndrewsThesis}.
\end{proof}

\begin{lem}[Algebraic Amalgamation Lemma]
	Suppose $A=B_1\cap B_2$, $A,B_1,B_2\in \C^\zeta_\mu$, and $B_1\smallsetminus A$ is simply algebraic over $A$. Let $E$ be the free-join of $B_1$ with $B_2$ over $A$. Then $E\in \C^\zeta_\mu$ unless one of the following holds:
	\begin{enumerate}
		\item\label{case1} $B_1\smallsetminus A$ is minimally simply algebraic over $F\subseteq A$ and there are $\mu(F, B_1\smallsetminus A, g_{B_2}(F))$ disjoint extensions over $F$ of the form of $B_1\smallsetminus A$ over $F$. 
		\item \label{case2} There is a set $Y\subseteq B_2$ with $\abs{Y}\leq \abs{B_1\smallsetminus A}$ so that $\delta(Y\vert \L_{B_1/A}/A\vert \L_{B_1/A})<0$.
		\item\label{case3} There is a minimally simply algebraic extension $Y/X$ and a set $F\subseteq B_1$ and $C\subseteq B_1$ of the form of $Y/X$ so that $\mu(X,Y,g_{E}(F))<\mu(X,Y,g_{B_1}(F))$.
	\end{enumerate} 
\end{lem}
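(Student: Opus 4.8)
The plan is to suppose that $E = B_1 \oplus_A B_2 \notin \C^\zeta_\mu$ and derive that one of cases (1)–(3) must hold. First I note that $E \in \C_0$: this follows from the submodularity observation, since $A \leq B_1$ means $\delta(X \cap B_1) \geq \delta(X \cap A)$ for any $X \subseteq E$, and combining this with $\delta(X) \geq \delta(X \cap B_1) + \delta(X \cap B_2) - \delta(X \cap A)$ (equality by freeness) together with $B_2 \in \C_0$ gives $\delta(X) \geq 0$. (One should also check that $A \leq B_2$ is not needed here, only $B_2 \in \C_0$.) Similarly $E \models \zeta$ since $\zeta$ is preserved by free joins and holds in $B_1, B_2$. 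So the only way $E \notin \C^\zeta_\mu$ is that the $\mu$-bound is violated: there is a minimally simply algebraic extension $Y/X$ and disjoint sets $F, C^1, \dots, C^r \subseteq E$, each $C^i/F$ of the form of $Y/X$, with $r > \mu(X, Y, g_E(F))$.

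Next I would apply Lemma \ref{Lemma42} to the configuration $F, C^1, \dots, C^r$ inside $E = B_1 \oplus_A B_2$ (noting that since $B_1 \smallsetminus A$ is simply algebraic over $A$, in particular $A \leq B_1$, so the hypotheses of that lemma are met). This splits into four cases. In the last case of Lemma \ref{Lemma42}, we get a set $Z = (F \cap A) \cup (C^i \cap B_2)$ with $\delta(Z / Z \cap A) < 0$; since $Z \smallsetminus A = C^i \cap B_2 \subseteq B_2$ has size at most $\abs{C^i} = \abs{Y}$, and only relations in $\L_{Y/X} = \L_{B_1/A}$-relevant symbols matter here, this should be massaged into case (2) — here I expect to have to be careful about which sublanguage to restrict to and to use that $C^i$ being of the form of $Y/X$ pins down the relevant relations. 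The third case of Lemma \ref{Lemma42} gives $r \leq \delta(F)$; but $\mu(X,Y,g_E(F)) \geq \delta(X)$, and I would need to compare $\delta(F)$ with $\delta(X)$ — this is where it matters that each $C^i/F$ is of the form of $Y/X$, so $F$ contains a copy of $X$ and hence... actually the cleanest route is probably that $r \leq \delta(F)$ contradicts $r > \mu \geq \delta(X)$ only after relating $\delta(F)$ and $\delta(X)$, so I'd instead argue that the minimally-simply-algebraic $C^i$ forces $F$ down to (a superset of) $X$, or push this case into case (1) or (3).

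In the second case of Lemma \ref{Lemma42}, $F \cup \bigcup_i C^i$ lies entirely in $B_1$ or entirely in $B_2$. If it lies in $B_2$: then $F, C^1, \dots, C^r$ witness a $\mu$-violation inside $B_2 \in \C^\zeta_\mu$ unless $r \leq \mu(X, Y, g_{B_2}(F))$; comparing with $r > \mu(X,Y,g_E(F))$ and using Observation \ref{gSameStrong}-type reasoning about how $g$ changes between $B_2$ and $E$ (it can only decrease, since $E$ is larger), the monotonicity $\mu(X,Y,\cdot)$ is not monotone in general, so the drop $g_E(F) < g_{B_2}(F)$ causing $\mu(X,Y,g_E(F)) < \mu(X,Y,g_{B_2}(F))$ is exactly case (3) with $Y/X$ and $F \subseteq B_1$ replaced appropriately — I'd need to check the indices in (3) match (the statement writes $F, C \subseteq B_1$, which I suspect should be read with $B_1$ and $B_2$ symmetric or is a typo). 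If it lies in $B_1$, the symmetric argument gives case (3) directly as stated. The first case of Lemma \ref{Lemma42} — some $C^i \subseteq B_1 \smallsetminus A$ and $F \subseteq A$ — is the source of case (1): here all $r$ of the $C^j$ must be of the form of $B_1 \smallsetminus A$ over $F$ (since $C^i$ already is and they share the relative q.f. type $Y/X$), each $C^j$ with $j \neq i$ lying in $B_2$ by disjointness and Lemma \ref{disjointnessfromstrength}-style arguments, so we get $\geq \mu(F, B_1 \smallsetminus A, g_{B_2}(F))$ disjoint extensions over $F$ inside $B_2$, which is case (1).

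The main obstacle I anticipate is the bookkeeping in the fourth case of Lemma \ref{Lemma42} and in reconciling cases (2)/(3) of that lemma with cases (1)–(3) of the target statement: one must carefully track the relevant finite sublanguage ($\L_{Y/X}$ versus $\L_{B_1/A}$), verify that $g_E(F)$ genuinely differs from $g_{B_i}(F)$ in the right direction, and handle the possibility that $r \leq \delta(F)$ without a clean contradiction — this likely requires observing that $F$ together with a copy of $X$ forces $\delta(F) \geq \delta(X)$ is false in general, so instead one folds that case into an overlap argument showing the $C^i$ must actually sit across the two sides, returning us to a previously-handled case.
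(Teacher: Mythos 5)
You have the right skeleton: verify $E\models\zeta$ and $E\in\C_0$, suppose a $\mu$-violation by disjoint $F,C^1,\dots,C^r$ each of the form of some minimally simply algebraic $Y/X$ with $r>\mu(X,Y,g_E(F))$, apply Lemma \ref{Lemma42}, and match its four cases against the three exceptions. This is exactly the paper's route. However, two of your four case analyses go wrong.

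The case $r\leq\delta(F)$ is not a loose end to be folded elsewhere: it is immediate. By Definition \ref{def:oftheform}, ``$C^i/F$ is of the form of $Y/X$'' requires $\tp_{q.f.}(F)=\tp_{q.f.}(X)$, so $\delta(F)=\delta(X)$ exactly, not merely ``$F$ contains a copy of $X$.'' Since $\mu(X,Y,m)\geq\delta(X)$ is a standing hypothesis on $\mu$, one gets $r\leq\delta(F)=\delta(X)\leq\mu(X,Y,g_E(F))$, contradicting the assumed violation outright.

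In the case where $F\cup\bigcup_i C^i$ lies entirely inside one side, you have $B_1$ and $B_2$ reversed. Because $A\leq B_1$, the free join has $B_2\leq E$, so by Observation \ref{gSameStrong} $g_{B_2}(F)=g_E(F)$ for any $F\subseteq B_2$ --- there is no ``drop.'' Hence containment in $B_2$ gives $r\leq\mu(X,Y,g_{B_2}(F))=\mu(X,Y,g_E(F))$, a clean contradiction, and case (3) is not invoked there at all. It is containment in $B_1$ that may fail to contradict: $B_1\not\leq E$ in general, so $g_{B_1}(F)$ can differ from $g_E(F)$, and if $\mu(X,Y,g_E(F))<\mu(X,Y,g_{B_1}(F))$ you land in case (3) --- which is why the statement of (3) has $F,C\subseteq B_1$; that is not a typo.

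Your handling of the first and fourth cases of Lemma \ref{Lemma42} is roughly on target, but in the fourth, the size bound is not $|C^i\cap B_2|\leq|Y|$ as you wrote: the bound $|Z\smallsetminus A|\leq|B_1\smallsetminus A|$ comes from the auxiliary fact that some other $C^j$ lies entirely in $B_1\smallsetminus A$, and all $C^j$ have equal size since they share a relative q.f.\ type, so $|C^i\cap B_2|\leq|C^i|=|C^j|\leq|B_1\smallsetminus A|$. Likewise $\L_{Y/X}$ and $\L_{B_1/A}$ need not coincide; the argument first restricts outside $F$ to $\L_{Y/X}$, then uses that $C^j\subseteq B_1\smallsetminus A$ to conclude the retained relations crossing between $Z\smallsetminus A$ and $A$ lie in $\L_{B_1/A}$.
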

\begin{proof}
	It is immediate that $E\models \zeta$ because $\zeta$ is preserved under free joins. Let $X\subseteq E$. Then $\delta(X)=\delta(X\cap B_1)+\delta(X\cap B_2)-\delta(X\cap A)\geq \delta(X\cap B_2)\geq 0$, since $X$ is the free-join of $X\cap B_1$ with $X\cap B_2$ over $X\cap A$ and $A\leq B_1$. Thus $E\in \C_0$.
	
	Suppose that $Y/X$ is a minimally simply algebraic extension, and $F,C^1,\ldots,C^r$ are disjoint subsets of $E$ so that each $C^i/F$ is of the form of $Y/X$. We restrict $E$ to $\L_{Y/X}$ for all tuples outside of $F$. 
	
	By Lemma \ref{Lemma42}, there are 4 cases to consider:
	\begin{itemize}
		\item One of the $C^i$ is contained in $B_1\smallsetminus A$ and $F\subseteq A$. Since $B_1\smallsetminus A$ is  simply algebraic over $A$, we have that $C^i=B_1\smallsetminus A$. In this case, we have that $r$ is at most one more than the number of disjoint extensions over $F$ of the form of $Y/X$ in $B_2$. Since $B_2\leq E$, Observation \ref{gSameStrong} shows that  $g_{B_2}(F)=g_{E}(F)$. So, if $r>\mu(X, Y,g_E(F))$, then in $B_2$ we already have $g_{B_2}(F)$ disjoint extensions over $F$ of the form of $Y/X$. Since $B_1\smallsetminus A/F$ is minimally simply algebraic and of the form of $Y/X$, each of these extensions is of the form of $B_1\smallsetminus A/F$. Thus we have (\ref{case1}) above.
		\item $F\cup\bigcup_{i\leq r}C^i$ is entirely contained in $B_1$ or $B_2$. If it's contained in $B_2$, then since $B_2\in \C_\mu$ and $B_2\leq E$, we see that $r\leq \mu(X,Y,g_{B_2}(F))=\mu(X,Y,g_{E}(F))$. If it's contained in $B_1$, then since $B_1\in \C_\mu$, we have that $r\leq \mu(X,Y,g_{B_1}(F))$. Either $r\leq \mu(X,Y,g_{B_1}(F))\leq \mu(X,Y,g_{E}(F))$ or we are in case (\ref{case3}) above.
		\item $r\leq \delta(F)$. Then it's automatic that $r\leq \mu(X,Y,g_E(F))$ as $\mu(X,Y,m)$ is always $\geq \delta(X)=\delta(F)$.
		\item For one $C^i$, setting $Z=(F\cap A)\cup (C^i\cap B_2)$, we see $\delta(Z/Z\cap A)<0$. Let $Y=Z\smallsetminus A$. Then $\delta(Y/A)<0$. Further, one of the $C^j$ is contained in $B_1\smallsetminus A$, so $\abs{Y}\leq \abs{C^i\cap B_2}\leq \abs{B_1\smallsetminus A}$. Since one of the $C^j$ is contained in $B_1\smallsetminus A$, all of the relations between the set $Y$ and $A$ which were retained when we took the reduct above are in $\L_{B_1/A}$, so $\delta(Y\vert \L_{B_1/A}/A\vert \L_{B_1/A})<0$. Thus, case (\ref{case2}) holds.
	\end{itemize}
\end{proof}

\begin{lem}[Strong Amalgamation Lemma]
	Suppose $A,B_1,B_2\in \C^\zeta_\mu$ and $A\leq B_i$ for $i=1,2$. Then there exists a $D\in \C^\zeta_\mu$ so that $B_2\leq D$ and a $g:B_1\rightarrow D$ so that $g$ is the identity map on $A$ and $g(B_1)\leq D$.
\end{lem}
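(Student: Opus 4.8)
The plan is to build $D$ as a free-join of $B_1$ and $B_2$ over $A$ and to repair the finitely many $\mu$-violations that can occur using the Algebraic Amalgamation Lemma iteratively. First I would set $E = B_1 \oplus_A B_2$. Since $A \leq B_1$ and $A \leq B_2$, the computation in the proof of the Algebraic Amalgamation Lemma shows $E \in \C_0$ and $E \models \zeta$; moreover both $B_1 \leq E$ and $B_2 \leq E$ because for any $X \subseteq E$ we have $\delta(X \cap B_i) \leq \delta(X)$ by freeness and strength. The only obstruction is that $E$ need not lie in $\C^\zeta_\mu$: there may be some minimally simply algebraic $Y/X$ and disjoint $F, C^1,\dots,C^r \subseteq E$ of the form of $Y/X$ with $r > \mu(X,Y,g_E(F))$.

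The key point is that such violations are \emph{controlled}. By Lemma \ref{Lemma42} (applied as in the Algebraic Amalgamation Lemma), each violating configuration falls into one of the four listed cases. Because $A \leq B_2$, the last case of Lemma \ref{Lemma42} cannot occur, so case (\ref{case2}) of the Algebraic Amalgamation Lemma is vacuous. Case (\ref{case3}) cannot create a \emph{new} violation of this kind when we pass from $B_1$ or $B_2$ to $E$: since $B_i \leq E$, Observation \ref{gSameStrong} gives $g_{B_i}(F) = g_E(F)$ whenever $F \subseteq B_i$, so the $\mu$-bound computed inside $B_i$ is the same as inside $E$. What remains is case (\ref{case1})-type behavior: the extension $Y/X$ is (up to relative quantifier-free type and the relevant finite reduct of $X$) exactly the form $B_1 \smallsetminus A$ over $F$ for some $F \subseteq A$, there are already $\mu(F, B_1\smallsetminus A, g_{B_2}(F))$ such disjoint extensions over $F$ inside $B_2$, and $B_1 \smallsetminus A$ supplies one more, violating the bound by exactly one. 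Crucially, $B_1 \smallsetminus A$ is \emph{simply} algebraic over $A$, so there is only one such problematic extension to deal with, and the excess is bounded.

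To fix this, I would not use $E$ directly. Instead, following the standard remedy: when the unique bad case occurs, one of the $\mu(F, B_1\smallsetminus A, g_{B_2}(F))$ disjoint copies of $B_1\smallsetminus A$ over $F$ already sitting in $B_2$ can be used as the image of $B_1$. Concretely, if case (\ref{case1}) holds, pick such a copy $C^\ast \subseteq B_2$ realizing $\tprqf(B_1\smallsetminus A / A)$ over $A$ (this is possible because $C^\ast/F$ has the form of $B_1 \smallsetminus A/F$, and one checks the relations to $A \smallsetminus F$ are forced to match by minimality and by the choice of $\mu$ respecting the relevant finite sublanguage of $X$); then set $D = B_2$ and let $g : B_1 \to D$ be the identity on $A$ together with a bijection of $B_1 \smallsetminus A$ onto $C^\ast$. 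One verifies $g(B_1) = A \cup C^\ast$ is strong in $D = B_2$: $\delta(C^\ast/A) = 0$ and $A \leq B_2$ force $A \cup C^\ast \leq B_2$. If case (\ref{case1}) does not hold, then by the analysis above $E \in \C^\zeta_\mu$ and we take $D = E$, $g$ the inclusion of $B_1$.

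The main obstacle I anticipate is the bookkeeping in the case-(\ref{case1}) repair: one must confirm that the copy $C^\ast \subseteq B_2$ of $B_1 \smallsetminus A$ over $F$ can actually be promoted to a copy of $B_1 \smallsetminus A$ over all of $A$ with the same quantifier-free type — i.e., that the extra relations connecting $B_1 \smallsetminus A$ to $A \smallsetminus F$ impose no obstruction — and that substituting $C^\ast$ for $B_1 \smallsetminus A$ does not itself break the $\mu$-bound for some \emph{other} extension $Y'/X'$. Both follow because $C^\ast$ and $B_1 \smallsetminus A$ have the same relative quantifier-free type over $A$ (after matching $F$ with $A$ via the hypothesis $\tp_{q.f.}(X) = \tp_{q.f.}(A)$ in the definition of ``of the form''), so $D = B_2$ with this $g$ is literally an isomorphic copy of a substructure already counted in $B_2$, and $B_2 \in \C^\zeta_\mu$ already. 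Assembling these observations gives $D \in \C^\zeta_\mu$ with $B_2 \leq D$ and $g(B_1) \leq D$, as required.
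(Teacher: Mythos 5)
Your proposal has a genuine gap: you never establish that $B_1 \smallsetminus A$ is simply algebraic over $A$, yet your whole analysis depends on it. In the statement of the Strong Amalgamation Lemma, the only hypothesis on $B_1$ is that $A \leq B_1$; in general $B_1 \smallsetminus A$ is neither simply algebraic over $A$ nor of any other special shape. Midway through you write ``Crucially, $B_1\smallsetminus A$ is \emph{simply} algebraic over $A$,'' but this is an unjustified assumption, and the Algebraic Amalgamation Lemma you want to invoke has exactly this simple algebraicity as a hypothesis, so without it the lemma does not apply and the four-case analysis from Lemma \ref{Lemma42} no longer reduces to the clean picture you describe (case (1) needs $B_1 \smallsetminus A$ simply algebraic to conclude $C^i = B_1\smallsetminus A$ and hence ``only one problematic extension'').

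The missing step is the reduction at the start of the paper's proof: one may assume there is no proper $B'$ with $A \leq B' \leq B_1$, i.e.\ $B_1$ is a \emph{minimal} strong extension of $A$, by decomposing $B_1$ into a tower of such extensions and amalgamating one step at a time. A minimal strong extension is either $A$ plus a single free point (in which case the free join trivially works) or is simply algebraic over $A$, and \emph{only then} does the analysis via the Algebraic Amalgamation Lemma go through. Your opening sentence gestures at an ``iterative'' use of the Algebraic Amalgamation Lemma, which is the right instinct, but you never actually set up the tower/induction, and the rest of the proof treats $B_1\smallsetminus A$ as if it were already known to be a single simply algebraic step. Once you add that reduction, the remainder of your argument (case (2) vacuous since $A\leq B_2$; case (3) vacuous since $B_1\leq E$ gives $g_{B_1}=g_E$; in case (1) reroute $B_1\smallsetminus A$ onto a copy $C^\ast \subseteq B_2\smallsetminus A$ and take $D=B_2$) matches the paper's proof.
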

\begin{proof}
	We may assume that there is no $B'$ so that $A\leq B'\leq B_1$ as otherwise we can first amalgamate this $B'$ with $B_2$ over $A$. Thus, either $B_1$ is $A\cup \{x\}$ where $x$ is unrelated to any element in $A$ or $B_1$ is simply algebraic over $A$, say minimally simply algebraic over $F\subseteq A$. In the first case, the free-join suffices. In the second case, the free-join works unless one of the three conditions enumerated in the Algebraic Amalgamation Lemma holds. The second and third cannot hold, because $A\leq B_2$. Thus we can assume that $B_1\smallsetminus A$ is minimally simply algebraic over $F\subseteq A$ and that in $B_2$ there are $C^1,\ldots C^{r}$ which are $\mu(F,B_1\smallsetminus A,g_{B_2}(F))$ disjoint extensions of the form of $B_1\smallsetminus A/F$. Since $A\leq B_1$ and $A\leq B_2$, we have $g_A(F)=g_{B_1}(F)=g_{B_2}(F)$. Thus, it cannot be that all of these $C^j$ are contained in $A$, since then $B_1$ would have violated the $\mu$-bound. Without loss of generality, $C^1\not\subseteq A$. $C^1$ cannot be partially in $A$ since $A\leq B_2$. Thus $C^1\subseteq B_2\smallsetminus A$. Since $A\leq B_2$, there are no extra relations in $\tprqf(C^1/A)$ other than those in $\tprqf(B_1\smallsetminus A/F)$, and $A\cup C^1\strong B_2$. Thus, we can form $g$ by sending $B_1\smallsetminus A$ to $C^1$ over $A$.
\end{proof}

Using the Strong Amalgamation Lemma, we build a generic model $\M$. We discuss its theory in the next subsection.

\subsection{The Theory of the Generic}

Using the Strong Amalgamation Lemma, {via a Fra\"iss\'e-style construction,} we get a model $\M:=\M^\zeta_\mu$ (if $\zeta$ is trivial, we write $\M=\M_{\mu}$) which satisfies the following 3 properties:

\begin{enumerate}
	
	\item $\M$ is countable
	\item If $A\leq \M$ is finite, then $A\in \C^\zeta_\mu$.
	\item Suppose $B\leq \M$, $B\leq C$, and $C\in \C^\zeta_\mu$. Then there exists an embedding $f:C\rightarrow \M$ which is the identity on $B$ and $f(C)\leq \M$.
\end{enumerate}

By a standard back-and-forth on strong substructures, these three properties characterize $\M$ up to isomorphism. We want to show that $\M$ is saturated by showing that any countable elementary extension of $\M$ is isomorphic to $\M$. To do so, we must check that these properties are elementary. We consider the properties:

\begin{enumerate}
	\item[(2')] This is broken down into 3 statements:
	\begin{enumerate}[a)]
	\item 
	$\M\models \zeta$.
	\item
	$\delta(A)\geq 0$ for every finite $A\subseteq \M$.
	\item
	If $F,C^1,\ldots C^r$ are disjoint subsets of $\M$ and each $C^i/F$ is of the form of $Y/X$. Then $r\leq \mu(X,Y,g_{\M}(F))$.
	\end{enumerate}
	\item[(3')] There is an infinite set $I\subseteq \M$ on which no relation holds and every finite $A\subset I$ is strong in $\M$.
	\item[(3'')]
	\label{3''}
	Suppose $B\subseteq \M$, $B\leq C$, $C\in \C^\zeta_\mu$, and $C\smallsetminus B$ is simply algebraic over $B$, say minimally simply algebraic over $F\subseteq B$. Suppose that there is no $Y\subseteq \M$ such that $\delta(Y\vert \L_{C/B}/B\vert\L_{C/B})<0$ and $\abs{Y}\leq \abs{C\smallsetminus B}$. Further suppose that there is no minimally simply algebraic extension $H/G$ and sets $H',G'\subseteq C$ of the form of $H/G$ so that $\mu(G,H,g_{C}(G))\neq \mu(G,H,g_{\M\oplus_B C}(G))$. Then there are $\mu(F, C\smallsetminus B, g_{\M}(F))$ disjoint extensions over $F$ of the form of $C\smallsetminus B$ over $F$ in $\M$.

\end{enumerate}

\begin{lem}
\label{123equivprimes}
	(1),(2),(3) is equivalent to (1),(2'),(3'),(3'').
\end{lem}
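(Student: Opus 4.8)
My plan is to establish the equivalence by proving the two implications; since (1) is common to both sides, the real task is to show that (2),(3) and (2'),(3'),(3'') are equivalent. I would first dispatch the passage between (2) and (2'), which is routine: given a finite $A\subseteq\M$, Lemma~\ref{ssclosurealg} provides a finite self-sufficient closure $A\subseteq\bar A\leq\M$, so (2) puts $\bar A$ into $\C^{\zeta}_{\mu}$, whence (2')(a) and (2')(b) follow because $\zeta$ passes to substructures and $\bar A\in\C_0$, and (2')(c) follows because any bad configuration $F,C^1,\dots,C^r$ sits inside its closure where the $\mu$-bound holds, with $g$ identified with $g_{\M}$ by Observation~\ref{gSameStrong}; conversely (2') directly forces every finite strong substructure into $\C^{\zeta}_{\mu}$, again using Observation~\ref{gSameStrong}. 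From here on I assume (2)/(2').

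For $(3)\Rightarrow(3')$ I would build $I$ greedily: from $\emptyset\leq\M$, at stage $n$ the extension $I_n\leq I_n\cup\{x\}$ with $x$ totally unrelated lies in $\C^{\zeta}_{\mu}$ (adjoining an isolated point creates no new relation occurrence, hence no new ``of the form'' configuration, and $\zeta$ survives free joins), so (3) embeds it over $I_n$ as a strong relation-free $I_{n+1}\leq\M$; take $I=\bigcup_n I_n$. For $(3)\Rightarrow(3'')$, fix $B,C,F$ and the two side-hypotheses of (3''), and let $\bar B\leq\M$ be the closure of $B$. I would iterate: set $N_0=\bar B$, and given $N_k\leq\M$ containing $\bar B$ and $k$ pairwise-disjoint extensions of the form of $C\smallsetminus B$ over $F$, form the free join $N_k\oplus_B C^{(k+1)}$ with $C^{(k+1)}$ a fresh copy of $C$ over $B$. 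Because $B\leq C^{(k+1)}$ one gets $N_k\leq N_k\oplus_B C^{(k+1)}$, and because $N_k\leq\M$ one gets $N_k\oplus_B C^{(k+1)}\leq\M\oplus_B C^{(k+1)}$. Now apply the Algebraic Amalgamation Lemma with $A=B$, $B_1=C^{(k+1)}$, $B_2=N_k$: case~(\ref{case2}) cannot occur, since its witness would lie in $N_k\subseteq\M$, against the first hypothesis of (3''); case~(\ref{case3}) cannot occur, since by Observation~\ref{gSameStrong} together with $N_k\oplus_B C^{(k+1)}\leq\M\oplus_B C^{(k+1)}\cong\M\oplus_B C$ the $g$-values, hence the $\mu$-values, it compares are equal by the second hypothesis of (3''). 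So either case~(\ref{case1}) holds, meaning $N_k$ already carries $\mu(F,C\smallsetminus B,g_{N_k}(F))=\mu(F,C\smallsetminus B,g_{\M}(F))$ disjoint extensions of the required form (present in $\M$), finishing the proof; or $N_k\oplus_B C^{(k+1)}\in\C^{\zeta}_{\mu}$ and (3) embeds it over $N_k$ with strong image $N_{k+1}$, which contains $\bar B$ and $k+1$ disjoint extensions of the required form. Since $N_k$ carries $\geq k$ such disjoint extensions and the $\mu$-bound in $\C^{\zeta}_{\mu}$ caps this number at $\mu(F,C\smallsetminus B,g_{\M}(F))$, case~(\ref{case1}) must eventually hold, giving (3'').

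For the converse, $(2'),(3'),(3'')\Rightarrow(2),(3)$: (2') gives (2) as above, and for (3) I would take $B\leq\M$ and $B\leq C\in\C^{\zeta}_{\mu}$, and—as in the proof of the Strong Amalgamation Lemma—decompose $B\leq C$ into a chain $B=C_0\leq C_1\leq\cdots\leq C_k=C$ in which each link either adjoins a single point unrelated to the previous structure or is a minimally simply algebraic extension, then embed the chain step by step keeping every image strong in $\M$. An unrelated-point link is handled by (3'): $A\leq\M$ bounds by $\delta(C_j)$ the number of relation occurrences touching any finite $A\subseteq I$, leaving only finitely many elements of $I$ related to $C_j$, and the standard pregeometry of the predimension leaves only finitely many elements of $I$ in the predimension-closure of $C_j$, so some $y\in I$ is both unrelated to $C_j$ and satisfies $C_j\cup\{y\}\leq\M$. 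A minimally simply algebraic link over $F_j\subseteq C_j$ is handled by (3'') with base $C_j$: its first hypothesis holds automatically because $C_j\leq\M$ makes $\delta(Y\vert\L_{C_{j+1}/C_j}/C_j\vert\L_{C_{j+1}/C_j})<0$ impossible for $Y\subseteq\M$ (restricting to a sublanguage cannot decrease relative predimension, so such a $Y$ would give $\delta(Y/C_j)<0$), and its second holds because $C_j\leq\M$ gives $C_{j+1}\leq\M\oplus_{C_j}C_{j+1}$ and thus equality of the relevant $g$- and $\mu$-values; so (3'') yields $\mu(F_j,C_{j+1}\smallsetminus C_j,g_{\M}(F_j))\geq1$ (the count is positive since $g_{\M}(F_j)=g_{C_{j+1}}(F_j)$ and $C_{j+1}\in\C^{\zeta}_{\mu}$) extensions $Q\subseteq\M$ over $F_j$ of the form of $C_{j+1}\smallsetminus C_j$ over $F_j$. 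A predimension count against $C_j\leq\M$ forces $\delta(Q/C_j)=0$, which forces $Q$ to be an honest isomorphic copy of $C_{j+1}\smallsetminus C_j$ over $F_j$ with no relations to $C_j\smallsetminus F_j$ and makes $C_j\cup Q$ strong in $\M$; this extends the embedding, and iterating along the chain proves (3).

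The step I expect to be the main obstacle is the iteration in $(3)\Rightarrow(3'')$: one must check carefully that neither case~(\ref{case2}) nor case~(\ref{case3}) of the Algebraic Amalgamation Lemma ever fires during the repeated amalgamations—this is precisely where the two technical side-hypotheses of (3'') are consumed, and it rests on the observation that free joins over $B$ (respectively over $\M$) preserve the relevant strong-substructure relations, so that all occurring $g$-values reduce to ones computed in $\M\oplus_B C$. The matching delicate point in the converse is the closing move of the minimally-simply-algebraic link, where the purely numerical conclusion of (3'') (``enough extensions of the form exist'') must be upgraded to an actual strong isomorphic copy; this succeeds only because the already-embedded piece $C_j$ is strong in $\M$, and the predimension bookkeeping there is the most error-prone part of the argument.
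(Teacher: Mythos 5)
Your proof is correct and follows essentially the same approach as the paper's own (much terser) proof: $(2)\Leftrightarrow(2')$ is handled via self-sufficient closures together with Observation~\ref{gSameStrong}; $(3)\Rightarrow(3')$ builds $I$ greedily from $(3)$; $(3)\Rightarrow(3'')$ iterates the Algebraic Amalgamation Lemma with the two side-hypotheses of $(3'')$ killing cases~(\ref{case2}) and~(\ref{case3}), then invokes $(3)$, exactly as the paper's ``apply the algebraic amalgamation lemma, then $(3)$'' indicates; and $(3'),(3'')\Rightarrow(3)$ decomposes a strong extension into primitive links and mirrors the proof of the Strong Amalgamation Lemma, as the paper explicitly says. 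The one place you gloss is the minimally-simply-algebraic link in the converse, where you take the extension $Q$ given by $(3'')$ to be disjoint from $C_j$ without comment; the standard justification (as in the Strong Amalgamation Lemma) is that not all the $\mu$-many guaranteed extensions can lie inside $C_j$ on pain of $C_{j+1}$ itself violating the $\mu$-bound, and one cannot be partially inside $C_j$ because $C_j\leq\M$ — a two-line fix, so the proof stands.
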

\begin{proof}
	Suppose (1),(2),(3) are true. To see (2')(a) holds, since $\zeta$ is universal it suffices to check that $\zeta$ holds on every finite substructure of $\mathcal{M}$. Every finite $A\subseteq \mathcal{M}$ is contained in its finite self-sufficient closure $C$, which is strong in $\mathcal{M}$. As $C\in \mathcal{C}_\mu^\zeta$ by (2), we have $C\models\zeta$ and thus $A\models \zeta$. Similarly by $C\in \mathcal{C}_\mu^\zeta$, (2')(b) holds on $A$. Finally, to see (2')(c), let $A=F\cup\bigcup_{i\leq r}C^i$ and again consider the self-sufficient closure $C$ of $A$. Then since $C\in \mathcal{C}_\mu^\zeta$ by (2), we have that $r\leq \mu(X,Y,g_{C}(F))$, but $g_C(F)=g_{\mathcal{M}}(F)$ because $C\leq \mathcal{M}$. This shows that (2') holds. (3') and (3'') hold similarly by applying the algebraic amalgamation lemma, i.e., the algebraic amalgamation lemma shows that it would keep us in $\C^\zeta_\mu$ to have these sets, and property (3) then gives us the needed sets inside $\M$.

	Now we suppose (1),(2'),(3'),(3''). Suppose $C\leq \M$. Then for any set $A\subseteq C$, we have $g_C(A)=g_{\M}(A)$. Thus condition (2') ensures that $C\in \C^\zeta_\mu$, so (2) holds. Property (3) follows from (3') and (3'') exactly as the strong amalgamation lemma follows from the algebraic amalgamation lemma. 
\end{proof}

\begin{lem}\label{elementaryconditions}
	Conditions (2'),(3'') are elementary schemata, i.e., there is a set of sentences $\Psi$ so that $M\models \Psi$ if and only if $M$ satisfies the conditions. Furthermore, $\Psi$ can be chosen to be a set of $\forall\exists$-sentences. 
	
	Condition (3') is preserved in elementary extensions or substructures containing $I$. That is, if $I\subseteq M\preceq N$, then $I$ satisfies the condition in $M$ if and only if it satisfies the condition in $N$.
\end{lem}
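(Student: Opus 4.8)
The plan is to handle the three conditions separately, since each has a rather different flavor. For condition (2'), parts (a) and (b) are already $\forall$-sentences (indeed (a) is $\zeta$ itself, which is $\forall$-axiomatizable by hypothesis, and (b) says $\delta(A)\geq 0$ for all finite $A$, a schema of universal sentences, one for each isomorphism type of finite structure). The only subtlety is part (c), the $\mu$-bound, and this is exactly what Observation \ref{firstorderoftheform} was set up to handle: for each minimally simply algebraic extension $Y/X$ there is a single first-order sentence $\theta_{Y/X}$ (the formula $\theta$ in the proof of that observation) which is true in every $C\in\C^\zeta_\mu$ and which implies that $C$ respects the $\mu$-bound for $Y/X$. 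Inspecting $\theta$, it has the shape $\forall A\,(\rho(A)\rightarrow(\cdots))$ where the matrix is a boolean combination of the $\phi_l(A)$ (each of which is a boolean combination of $\exists\forall$-statements, since $g_C(A)=l$ asserts existence of a small bad $X$ and non-existence of a smaller one) and the $\neg\exists Z\,\psi_k(A,Z)$ (universal). So $\theta_{Y/X}$ is logically equivalent to a $\forall\exists$-sentence after minor rewriting; I would note that $g_C(A)\le l$ is $\exists$ and $g_C(A)\ge l$ is $\forall$, so pinning $g_C(A)=l$ and then asserting a $\forall$-statement about $A$ keeps everything within $\forall\exists$. Taking $\Psi_{(2')}$ to be $\zeta$ together with the $\delta\ge 0$ schema together with $\{\theta_{Y/X}\}$ over all minimally simply algebraic $Y/X$ gives the claim for (2').

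For condition (3''), I would argue that the statement ``if [hypotheses on $B\le C$] then there are $\mu(F,C\smallsetminus B,g_{\M}(F))$ disjoint extensions of the form of $C\smallsetminus B$ over $F$'' can be packaged, for each isomorphism type of the finite data $(F\subseteq B\subseteq C)$, as a single $\forall\exists$-sentence. The key point is that the hypotheses of (3'') are all first-order conditions on finitely much data: $B\le C$ (a condition on the finite structure $C$ only, hence bounded), $C\in\C^\zeta_\mu$ (bounded, since $C$ is a fixed finite structure), the non-existence of a small $Y$ with $\delta(Y|\L_{C/B}/B|\L_{C/B})<0$ (a $\forall$-statement, since it bounds $\abs Y$), and the $g$-stability clause about $H/G$ inside $C$ (again bounded data). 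Crucially, as in the proof of Observation \ref{firstorderoftheform}, one must replace $g_{\M}(F)$ by its ``stabilized'' value: since $\mu(F,C\smallsetminus B,m)$ is eventually constant in $m$ with stabilization point $m_0$ depending only on finitely much data, the quantity $\mu(F,C\smallsetminus B,g_{\M}(F))$ is determined by $\min(g_{\M}(F),m_0)$, and each of the finitely many cases ``$g_{\M}(F)=l$'' for $l<m_0$, or ``$g_{\M}(F)\ge m_0$'', is first-order. For a fixed such case the conclusion ``$\exists^{k}$-many disjoint extensions of the form of $C\smallsetminus B$ over $F$'' is purely existential. Assembling: universally quantify over the embedded copy of $(F,B,C)$, take the hypotheses (a $\forall$-matrix, after the $g$-stabilization trick converts the $g$-clauses into boolean combinations bounded by $m_0$), and conclude with the existential statement. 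This is $\forall\exists$; collecting over all finite isomorphism types gives $\Psi_{(3'')}$.

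For condition (3'), the point is different: there is no hope of making it a single elementary schema valid in all models (the set $I$ is extra data), but we only need that it transfers up and down an elementary extension $I\subseteq M\preceq N$. If $I$ satisfies the condition in $N$ — $I$ is infinite, no relation holds on $I$, every finite $A\subset I$ is strong in $N$ — then no relation holding on $I$ and $I$ infinite pass to $M$ trivially, and ``$A\le N$'' implies ``$A\le M$'' directly from the definition of strong (any $D$ with $A\subseteq D\subseteq M$ is also $\subseteq N$, so $\delta(A,M)\ge\delta(A,N)=\delta(A)$, while $\delta(A,M)\le\delta(A)$ always). Conversely if $A\le M$ for every finite $A\subset I$, I would use that for each fixed finite $A\subset I$ and each $n$, the statement ``there is no $X\supseteq A$ with $\abs X\le\abs A+n$ and $\delta(X)<\delta(A)$'' is a first-order sentence with parameters $A$, true in $M$, hence true in $N$ by elementarity; letting $n\to\infty$ gives $A\le N$. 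The no-relation and infiniteness clauses are likewise first-order (resp.\ preserved upward), so the condition transfers.

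The main obstacle I anticipate is the bookkeeping in (3''): making precise that ``$g_{\M}(F)$'' may be unbounded yet only its truncation at the $\mu$-stabilization point $m_0$ matters, and checking that the hypothesis clauses genuinely only constrain bounded data so that the whole implication stays $\forall\exists$ rather than creeping up to $\forall\exists\forall$. The analogue of this was already carried out carefully in the proof of Observation \ref{firstorderoftheform}, so I would lean on exactly that mechanism — the formulas $\rho$, $\psi_k$, $\phi_l$, $\theta$ — and adapt it, rather than reprove it.
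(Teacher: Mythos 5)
Your proposal follows the direct syntactic route that the paper itself only gestures at (``carefully examining the formula $\theta$ \ldots one can see'') before pivoting to a cleaner mechanism. The paper's actual argument that (2') and (3'') are $\forall\exists$-schemata is \emph{preservation under unions of chains}: it verifies that if $M_0\subseteq M_1\subseteq\cdots$ each satisfy the condition then so does $N=\bigcup_i M_i$, using that $g_{M_i}(A)$ eventually equals $g_N(A)$ (the offending witness lives in some $M_i$) and likewise for $g_{M_i\oplus_B C}(G)$; an elementary condition preserved under unions of chains is automatically $\forall\exists$-axiomatizable, and this completely sidesteps the quantifier bookkeeping. Your route can be made to work, but it is precisely the path the paper chose not to make rigorous, and you should either carry out the quantifier-shuffling explicitly or adopt the chains argument.

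The gap in the direct route is one you anticipate but do not resolve. The body of $\theta$ from Observation~\ref{firstorderoftheform} is a \emph{disjunction} of cases $\bigvee_{l}(\phi_l\wedge\neg\exists Z\,\psi_{\cdot})\vee(\bigwedge_l\neg\phi_l\wedge\neg\exists Z\,\psi_{\cdot})$, and each disjunct $\phi_l\wedge\neg\exists Z\,\psi_{\cdot}$ is $\exists\forall$ in shape (since $\phi_l$ pins $g_C(A)=l$, an $\exists\wedge\forall$ condition), so the naive prenex form of $\forall A\,(\rho\to\text{body})$ is $\forall\exists\forall$, not $\forall\exists$. To land in $\forall\exists$ one must use that the cases partition and rewrite the body as the logically equivalent \emph{conjunction} $\bigwedge_l(\phi_l\to\neg\exists Z\,\psi_{\cdot})\wedge(\ldots)$, whose conjuncts each have the shape $\forall\vee\exists\vee\forall$, hence $\forall\exists$; and one must also note that the antecedent $\bigwedge_l\neg\phi_l$ is \emph{semantically} just the universal assertion $g_C(A)\geq m$, not the $\exists\forall$-looking combination its literal syntax suggests. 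Your claim for (3'') that the hypotheses form ``a $\forall$-matrix'' is not literally right for the same reason --- the $g$-stability clause is $\exists\wedge\forall$ --- though the same implication-form trick rescues the count. None of this is fatal, but it is exactly the bookkeeping the chains argument was introduced to avoid. Your treatment of (3') is fine and is an unpacking of the paper's one-line observation that strength of a fixed finite set is an infinite schema of universal sentences.
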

\begin{proof}
	The first two conditions in (2') are easy to see are elementary. For the third, we use the formulas from Observation \ref{firstorderoftheform}. Carefully examining the formula $\theta$ produced in Observation \ref{firstorderoftheform}, one can see they are equivalent to $\forall\exists$ formulas. Alternatively, to see that (2') is equivalent to a collection of $\forall\exists$-sentences, it suffices to see that it is preserved in unions of chains. Suppose $M_0\subseteq M_1\subseteq \cdots \subseteq \bigcup_i M_i=N$ are so that each $M_i$ satisfies (2'). Note that $\lim_i g_{M_i}(A)=g_N(A)$ for every $A\subseteq N$. Suppose there are $F,C^1,\ldots C^r$ disjoint subsets of $N$ and each extension $C^j/F$ is of the form of $Y/X$. Then take $i$ large enough that $M_i$ contains $F\cup \bigcup_i C^i$ and $g_{M_i}(A)=g_{N}(A)$. Then since $M_i$ satisfies (2'), we see that $r\leq \mu(X,Y,g_{M_i}(A))=\mu(X,Y,g_N(A))$.
	
	For (3'), it suffices to note that a finite set being strong in the structure is defined by an infinite schema of universal sentences.
	
	For (3''), the fact that it is first-order to determine the value of $\mu(X,Y,m)$ (as in Observation \ref{firstorderoftheform}) and that it is first-order to be strong enough so that $\mu(G,H,g_C(G))=\mu(G,H,g_{M\oplus_B C}(G))$ suffices to make this first order. Let us see that it is preserved in unions of chains. Again suppose that $M_0\subseteq M_1\subseteq \cdots \subseteq \cup_i M_i=N$ and each $M_i$ satisfies (3''). Let $B\subseteq N$, $B,C\in \C^\zeta_\mu$ be as in the hypothesis of (3''). If $B$ is strong enough in $N$ that there is no $Y$ with $\abs{Y}<\abs{C\smallsetminus B}$ and $\delta(Y\vert \L_{C/B}/B\vert \L_{C/B})<0$, then this is true in every $M_i$ which contains $B$ as the non-existence of this $Y$ is a universal condition. Similarly, since for every $A\subseteq N$, we have that $\lim_i g_{M_i}(A)=g_N(A)$, and $M_i\leq M_i\oplus_B C$ and $N\leq N\oplus_B C$, we also have that $\lim_i g_{M_i\oplus_B C}(A)=g_{N\oplus_B C}(A)$. Thus if there is no $H/G$ an extension in $C$ so that $\mu(G,H,g_C(G))\neq \mu(G,H,g_{N\oplus_B C}(G))$, then the same is true in $M_i$ for any large enough $M_i$. Thus, in a large enough $M_i$, we must have $\mu(F,C\smallsetminus B,g_{M_i}(F))=\mu(F,C\smallsetminus B,g_{N}(F))$ disjoint extensions over $F$ of the form of $C\smallsetminus B$ over $F$. Thus, we have these extensions in $N$ as well.	
\end{proof}

\begin{lem}
	$\M $ is saturated.
\end{lem}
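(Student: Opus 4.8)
The plan is to carry out the strategy flagged above: show that \emph{every countable elementary extension of $\M$ is isomorphic to $\M$}, and then conclude saturation from the standard fact that a countable structure with this property is saturated. That fact is worth recalling, since it is the only ``external'' input: if some complete $n$-type over $\emptyset$ consistent with $\Th(\M)$ were omitted in $\M$, one could realize it in a countable elementary extension of $\M$, which would then fail to be isomorphic to $\M$; so $\Th(\M)$ has only countably many types over $\emptyset$ in each arity, i.e.\ it is small, hence has a countable saturated model $\M^{+}$; as $\M^{+}$ is universal we may take $\M\preceq\M^{+}$, so $\M^{+}$ is a countable elementary extension of $\M$ and therefore $\M\cong\M^{+}$ is saturated.

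So fix a countable $\N$ with $\M\preceq\N$; I want $\N\cong\M$. By the back-and-forth on strong substructures that shows (1),(2),(3) characterize $\M$ up to isomorphism, it suffices to check that $\N$ satisfies (1),(2),(3), equivalently --- by Lemma \ref{123equivprimes} --- that $\N$ satisfies (1),(2'),(3'),(3''). Property (1) holds because $\N$ is countable. Properties (2') and (3'') hold because, by Lemma \ref{elementaryconditions}, they are axiomatized by a set $\Psi$ of first-order sentences, $\M\models\Psi$, and $\N\equiv\M$. Finally, for (3') fix an infinite $I\subseteq\M$ witnessing (3') in $\M$; since $I\subseteq\M\preceq\N$, the clause of Lemma \ref{elementaryconditions} about (3') gives that the same set $I$ witnesses (3') in $\N$. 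Hence $\N$ satisfies (1),(2'),(3'),(3''), so (1),(2),(3) by Lemma \ref{123equivprimes}, so $\N\cong\M$.

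Given the two preceding lemmas this argument is short, so I do not expect a genuine obstacle; the point that needs the most care --- and the reason those lemmas were set up as they were --- is the transfer of (3') along $\M\preceq\N$. Condition (3') is the only one of (2'),(3'),(3'') that is not literally first-order, and it survives the passage to $\N$ only because the auxiliary data it really depends on \emph{is} first-order: for a fixed finite $A$ the assertion ``$g_C(A)\leq m$'' is expressed by a formula with parameters $A$ (a bounded-size witness involving only the first $m$ relations, cf.\ Observation \ref{firstorderoftheform}), so ``$A$ is strong in $C$'', i.e.\ $g_C(A)=\infty$, has the same truth value in $\M$ as in $\N$ whenever $A\subseteq\M$; together with the triviality that ``no relation holds on $I$'' is universal, this is exactly what lets the single set $I$ keep working in $\N$. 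If one instead wanted to extract $\omega$-homogeneity of $\M$ and realization of types over parameters directly, one would use property (3) and a further back-and-forth on strong substructures, but the isomorphism/smallness detour above makes that unnecessary.
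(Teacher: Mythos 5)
Your proposal is correct and follows essentially the same route as the paper: the paper's terse two-sentence proof relies on the surrounding discussion (that (1),(2),(3) characterize $\M$ up to isomorphism, together with Lemmas \ref{123equivprimes} and \ref{elementaryconditions}) to establish that every countable elementary extension of $\M$ is isomorphic to $\M$, and then deduces smallness and hence a countable saturated model isomorphic to $\M$. You simply make that implicit step explicit, and your bookkeeping --- in particular, transferring (3') along $\M\preceq\N$ by fixing a single witness $I\subseteq\M$ and invoking the preservation clause in Lemma \ref{elementaryconditions} --- is exactly right.
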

\begin{proof}
	Since any countable elementary extension of $\M$ is isomorphic with $\M$, we see that there are only countably many $m$-types in $\Th{\M}$ for each $m$. Thus, there is some countable saturated model of the theory of $\M$. But this, being an elementary extension of $\M$, is isomorphic with $\M$.
\end{proof}

Next we want to describe algebraicity inside $\M$.
\begin{defn}
	For any finite set $A\subset \M$, we define $d(A)=\delta(A,\M)$. Recall, $\delta(A,\M)=\min\{\delta(C)\mid A\subseteq C\subset \M, \text{ $C$ finite}\}$.
\end{defn}

\begin{lem}\label{d goes up is generic}
	If $d(\{x\}\cup A)=d(A)+1=d(\{y\}\cup A)$, then $(\M,Ax)\cong_A (\M,Ay)$.
\end{lem}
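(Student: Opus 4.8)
The plan is to show that if $d(\{x\}\cup A) = d(A)+1 = d(\{y\}\cup A)$, then $Ax$ and $Ay$ have the same strong closure structure over $A$, and then run a back-and-forth. First I would compute the self-sufficient closures: let $A' = \mathrm{cl}(A)$ be the self-sufficient closure of $A$ in $\M$, which is finite by Lemma \ref{ssclosurealg}. The hypothesis $d(\{x\}\cup A) = d(A)+1$ means that adding $x$ costs nothing in dimension, i.e.\ $\delta(A'\cup\{x\}) = \delta(A') + 1$ once we pass to $A'$; since no relation can hold that would drop the predimension below the strong value, I claim that $x$ is freely joined to $A'$ over the empty set — more precisely, $A'\cup\{x\}$ is strong in $\M$ and $x$ is unrelated to $A'$. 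The same holds for $y$. So both $A'x$ and $A'y$ are isomorphic to the free join $A' \oplus \{pt\}$, and this isomorphism fixes $A'$ pointwise (sending $x\mapsto y$).

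Next I would upgrade this to an isomorphism of $\M$ using the characterization of $\M$ up to isomorphism by properties (1),(2),(3) (equivalently (1),(2'),(3'),(3'')). The map $\sigma\colon A'x \to A'y$ fixing $A'$ and swapping $x,y$ is an isomorphism between two strong finite substructures of $\M$. By a standard back-and-forth on strong finite substructures — exactly the argument that shows (1),(2),(3) determine $\M$ — any isomorphism between strong finite substructures of $\M$ extends to an automorphism of $\M$. Concretely: enumerate $\M$, and at each stage extend the partial isomorphism by one element on each side; given a strong finite $B\leq \M$ with an isomorphic copy $\sigma(B)\leq \M$, for any $b\in\M$ we take the self-sufficient closure $C$ of $B\cup\{b\}$, which is in $\C^\zeta_\mu$ and has $B\leq C$; by property (3) applied on the $\sigma(B)$ side we embed $C$ strongly into $\M$ over $\sigma(B)$, extending $\sigma$; symmetrically on the other side. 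This produces the desired $\A\in\Aut(\M)$ with $\A\upharpoonright A = \mathrm{id}$ and $\A(x)=y$, which is exactly $(\M, Ax)\cong_A(\M,Ay)$.

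The main obstacle is the first step: verifying that $d(\{x\}\cup A)=d(A)+1$ really forces $x$ to be freely joined (and strongly joined) to the self-sufficient closure of $A$. One has to argue that if some relation held between $x$ and $A'$, or if $A'\cup\{x\}$ failed to be strong, then one could find a finite set $D\supseteq A\cup\{x\}$ with $\delta(D) \leq \delta(A')= d(A)$, contradicting $d(\{x\}\cup A)=d(A)+1$. This uses the submodularity observation (inclusion–exclusion on relations) together with the fact that $A'\leq\M$, so that the cheapest witness for $d$ of a set containing $A$ already contains $A'$. A minor subtlety is that we need the conclusion to hold over $A$ itself, not just over $A'$, but since $A'\subseteq\acl(A)$ in the relevant sense — $A'$ is determined by $A$ and $\M$ — the automorphism fixing $A'$ a fortiori fixes $A$, so nothing extra is needed there. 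Everything else is the routine Fraïssé back-and-forth already implicit in the construction of $\M$.
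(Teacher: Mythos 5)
Your proposal is correct and follows the same route as the paper's proof: take the self-sufficient closure $B = A'$ of $A$, observe that the hypotheses force $B\cup\{x\}$ and $B\cup\{y\}$ to be strong in $\M$ and isomorphic over $B$ (each a free join of $B$ with one unrelated point), and then extend that finite isomorphism to an automorphism of $\M$ by back-and-forth on strong substructures using property (3). You simply spell out a couple of the submodularity checks that the paper leaves implicit in the word ``Thus.''
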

\begin{proof}
	Let $B$ be the self-sufficient closure of $A$, so $\delta(B)=d(A)$. Then $B\leq \M$. Thus $\{x\}\cup B\leq \M$ and $\{y\}\cup B\leq \M$. Using property (3) and a back-and-forth along strong substructures, we see that $(\M,Bx)\cong_B (\M,By)$, which implies the needed isomorphism.
\end{proof}

\begin{lem}\label{same d is algebraic}
	If $d(\{x\}\cup A)=d(A)$, then $x\in \acl(A)$.
\end{lem}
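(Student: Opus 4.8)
The plan is to reduce the statement to self-sufficient closures and then prove, by peeling off one minimally simply algebraic extension at a time, that a finite extension of a strong set having the same predimension is contained in its algebraic closure; the $\mu$-bound, in the form of condition (2')(c), is what forces finiteness.

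First I would pass to self-sufficient closures. Let $B$ be the self-sufficient closure of $A$ in $\M$ and $C$ that of $\{x\}\cup A$; both are finite by Lemma~\ref{ssclosurealg}, with $\delta(B)=d(A)$ and $\delta(C)=d(\{x\}\cup A)$. Since $C\leq\M$, the first lemma on strong substructures gives $\delta(B\cap C)\leq\delta(B)$, while minimality of $B$ among finite subsets of $\M$ containing $A$ gives $\delta(B\cap C)\geq\delta(B)$; hence $\delta(B\cap C)=\delta(B)$, and uniqueness of the self-sufficient closure forces $B\cap C=B$, i.e.\ $A\subseteq B\subseteq C$. As $B\leq\M$ and $B\subseteq C\subseteq\M$ we get $B\leq C$, and the hypothesis $d(\{x\}\cup A)=d(A)$ becomes $\delta(C)=\delta(B)$. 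I would also record that $B\subseteq\acl(A)$: any automorphism of $\M$ fixing $A$ pointwise carries $B$ to the (unique) self-sufficient closure of $A$, hence onto $B$; as $B$ is finite and $\M$ is saturated, each element of $B$ has finite orbit over $A$ and so lies in $\acl(A)$. Since $\acl$ is transitive, it then suffices to prove $C\subseteq\acl(B)$, for then $x\in C\subseteq\acl(B)\subseteq\acl(A)$.

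The core is the claim: whenever $B'\leq D\leq\M$ with $D$ finite and $\delta(D)=\delta(B')$, one has $D\subseteq\acl(B')$. I would prove this by induction on $|D\setminus B'|$, the case $D=B'$ being trivial. For the inductive step pick $B_1$ of minimal size with $B'\subsetneq B_1\subseteq D$ and $\delta(B_1)=\delta(B')$; from $\delta(D)=\delta(B')$ and minimality one gets $B'\leq B_1\leq D$, $\delta(B_1/B')=0$, and that no proper nonempty subset of $B_1\setminus B'$ has predimension $0$ over $B'$, so $B_1\setminus B'$ is simply algebraic over $B'$ and hence minimally simply algebraic over some minimal $F\subseteq B'$. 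Now every automorphism of $\M$ fixing $B'$ pointwise sends $B_1\setminus B'$ to a set which is again simply algebraic over $B'$ and, viewed as an extension of $F$, is of the form of $(B_1\setminus B')/F$; distinct such conjugates are pairwise disjoint (and disjoint from $F$) by Lemma~\ref{disjointnessfromstrength}, using $B'\leq\M$, so by condition (2')(c) there are at most $\mu(F,B_1\setminus B',g_\M(F))<\infty$ of them. Hence $\tp(B_1\setminus B'/B')$ has finitely many realizations, so $B_1\setminus B'\subseteq\acl(B')$ and therefore $B_1\subseteq\acl(B')$. Applying the induction hypothesis to $B_1\leq D\leq\M$ (note $\delta(B_1)=\delta(B')=\delta(D)$ and $|D\setminus B_1|<|D\setminus B'|$) gives $D\subseteq\acl(B_1)\subseteq\acl(\acl(B'))=\acl(B')$, completing the induction. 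Taking $B'=B$ and $D=C$ finishes the proof.

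The step I expect to be most delicate is the inductive one: checking that the minimal intermediate set $B_1$ is strong in $D$ and that $B_1\setminus B'$ is genuinely a minimally simply algebraic extension over a suitable minimal base $F$, and — crucially, so that (2')(c) actually bounds the number of conjugates rather than merely abstract disjoint copies — verifying that distinct conjugates over the strong set $B'$ are automatically disjoint and are all of the prescribed form. The passage to self-sufficient closures and the transitivity-of-algebraicity bookkeeping are routine.
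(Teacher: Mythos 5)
Your proposal is correct and follows essentially the same route as the paper: pass to the self-sufficient closure $B$ of $A$, show $B\subseteq\acl(A)$, then build a chain of simply algebraic extensions from $B$ up to a finite set witnessing $d(\{x\}\cup A)=d(A)$, using Lemma~\ref{disjointnessfromstrength} together with the $\mu$-bound to bound the number of conjugates at each step. The only cosmetic differences are that the paper picks an arbitrary finite $E$ with $\delta(E)=d(A)$ rather than the self-sufficient closure of $\{x\}\cup A$, and constructs the whole chain at once rather than by induction.
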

\begin{proof}
	Suppose $d(\{x\}\cup A)=d(A)$. Let $B$ be the self-sufficient closure of $A$, so $\delta(B)=d(A)$. Lemma \ref{ssclosurealg} shows that $B$ is algebraic over $A$.
	
	Now we show that $x\in \acl(B)$, which suffices since $B\subseteq \acl(A)$. Fix $E$ to be a set so that $\delta(E)=d(\set{x}\cup A)=d(A)$ and $\set{x}\cup A\subseteq E$. Then $\delta(E\cup B)\leq \delta(E)+\delta(B)-\delta(E\cap B)$. If $E$ does not contain $B$, then $\delta(E\cap B)>\delta(B)$ by minimality of $B$. Then $\delta(E\cup B)<\delta(E)=d(A)$, a contradiction. So $E$ contains $B$.
	
	Take a sequence of extensions $B=B_0\subset B_1\subset\cdots B_n=E$ with each $B_{i+1}$ chosen to be minimal containing $B_i$ contained in $E$ with $\delta(B_{i+1})=d(A)$. It follows from $\delta(B_{i})=d(A)$ that each $B_i\leq \M$. Then $B_{i+1}$ is a simply algebraic extension over $B_i$. Thus, the $\mu$-bound ensures that there are not infinitely many extensions over $B_i$ which are disjoint and of the form of $B_{i+1}/B_i$, and Lemma \ref{disjointnessfromstrength} shows that any two extensions of the form of $B_{i+1}/B_i$ must be disjoint. Thus $B_{i+1}$ is algebraic over $B_i$. Conclude that $E$ is algebraic over $B$.
\end{proof}

\begin{cory}\label{itisstronglyminimal}
	$\Th{\M}$ is strongly minimal.
\end{cory}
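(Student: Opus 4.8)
The plan is to obtain Corollary~\ref{itisstronglyminimal} essentially for free from Lemmas~\ref{d goes up is generic} and~\ref{same d is algebraic} together with the saturation of $\M$. Since $\Th{\M}$ is complete and has infinite models ($\M$ itself is infinite, containing the set $I$ from~(3')), it suffices to show that over every finite $A\subseteq\M$ there is \emph{at most one} non-algebraic type in $S_1(A)$; a standard compactness argument then promotes this to the statement that in every model of $\Th{\M}$ every $1$-definable set is finite or cofinite.

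First I would observe the submodularity bound $d(A)\le d(\{x\}\cup A)\le d(A)+1$ for finite $A\subseteq\M$ and $x\in\M$: the left inequality is immediate from the definition of $d$ (any finite set containing $\{x\}\cup A$ contains $A$), while for the right one we choose a finite $C$ with $A\subseteq C\subseteq\M$ and $\delta(C)=d(A)$ and note $d(\{x\}\cup A)\le\delta(C\cup\{x\})\le\delta(C)+1$, since adjoining one point increases $\abs{C}$ by one and can only add relations. (Finiteness of $d$ here uses $\M\in\C_0$, i.e.~(2').) Now fix finite $A\subseteq\M$ and split on cases. If $d(\{x\}\cup A)=d(A)$, then $x\in\acl(A)$ by Lemma~\ref{same d is algebraic}. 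If $d(\{x\}\cup A)=d(A)+1$, then for any $y$ with $d(\{y\}\cup A)=d(A)+1$, Lemma~\ref{d goes up is generic} gives an $A$-isomorphism $(\M,Ax)\cong_A(\M,Ay)$, so in particular $\tp(x/A)=\tp(y/A)$. Hence every $x\in\M$ is either algebraic over $A$ or realizes a single fixed type $p_A$, and therefore $S_1(A)$ carries at most one non-algebraic type.

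For the final step I would argue by contradiction. If $\Th{\M}$ failed to be strongly minimal there would be a model $N\models\Th{\M}$, a tuple $\bar b$ from $N$, and a formula $\varphi(x,\bar y)$ with $\varphi(x,\bar b)$ infinite and co-infinite in $N$; then $N\models\exists\bar y\,(\exists^{k}x\,\varphi(x,\bar y)\wedge\exists^{k}x\,\neg\varphi(x,\bar y))$ for every $k$, so the same sentences hold in $\M$. As these form an increasing, finitely satisfiable family, saturation of $\M$ yields a single finite tuple $\bar b'$ from $\M$ with $\varphi(x,\bar b')$ infinite and co-infinite in $\M$; a further use of saturation produces a realization of $\varphi(x,\bar b')$ and a realization of $\neg\varphi(x,\bar b')$, each non-algebraic over $\bar b'$. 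That is two distinct non-algebraic types over the finite set $\bar b'\subseteq\M$, contradicting the previous paragraph.

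I do not anticipate a real obstacle here: all the substance is already carried by Lemmas~\ref{d goes up is generic} and~\ref{same d is algebraic} (which in turn rest on property~(3), the $\mu$-bound, and Lemma~\ref{disjointnessfromstrength}), so this is genuinely a corollary. The only points needing any care are the elementary estimate $d(\{x\}\cup A)\le d(A)+1$ and the two routine compactness/saturation reductions in the last paragraph; if one prefers, the appeal to saturation can be replaced by working directly in a sufficiently saturated elementary extension of $\M$.
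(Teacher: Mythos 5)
Your proposal is correct and is essentially the paper's argument, just with the intermediate steps made explicit: the paper states, with the same two lemmas as input, that there is a unique non-algebraic $1$-type over any finite subset of $\M$ and then invokes saturation to conclude strong minimality. The submodularity estimate $d(A)\le d(\{x\}\cup A)\le d(A)+1$ and the final compactness/saturation reduction you spell out are precisely the routine steps the paper leaves implicit.
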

\begin{proof}
	In the previous two lemmas, we saw that over any set there is a unique non-algebraic type realized in $\M$. Since $\M$ is saturated, this implies that $\Th{\M}$ is strongly minimal.
\end{proof}

 \begin{lem}\label{modelcompletenessprereduct}
The axioms stating the model is infinite, (2') and (3'') axiomatize $\Th{\M^\zeta_\mu}$. Thus, $\Th{\M^\zeta_\mu}$ is model complete.
\end{lem}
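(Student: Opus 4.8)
The plan is to let $T_0$ be the theory axiomatized by the sentences asserting that the model is infinite, together with the schemata (2') and (3''), and to prove that $T_0=\Th{\M}$, where $\M=\M^\zeta_\mu$; model completeness will then follow formally. By Lemma~\ref{elementaryconditions} the schemata (2') and (3'') may be taken to consist of $\forall\exists$-sentences, so $T_0$ is a $\forall\exists$-theory with no finite models. By Lemma~\ref{123equivprimes}, $\M$ satisfies (2') and (3'') and is infinite, so $\M\models T_0$ and hence $T_0\subseteq\Th\M$; the substance of the lemma is the reverse inclusion, i.e.\ completeness of $T_0$.

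To prove $T_0$ complete I would show that every $N\models T_0$ satisfies $N\equiv\M$. Fix $N\models T_0$ and an $\aleph_0$-saturated elementary extension $N^{*}\succeq N$, so that $N^{*}\models T_0$ as well; it suffices to show $N^{*}\equiv\M$. I claim $N^{*}$ has properties (2) and (3) with respect to $\C^\zeta_\mu$. Property (2) is immediate, exactly as in the proof that (2') implies (2) inside Lemma~\ref{123equivprimes}: if $A\leq N^{*}$ is finite then $g_{N^{*}}(A')=g_A(A')$ for $A'\subseteq A$ by Observation~\ref{gSameStrong}, so the $\mu$-bounds witnessed in $N^{*}$ transfer to $A$. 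For property (3), given finite $B\leq N^{*}$ and $B\leq C\in\C^\zeta_\mu$, one reduces (by repeatedly passing to self-sufficient closures, just as in the proof of the Strong Amalgamation Lemma) to the case that $C$ is $B$ together with a point unrelated to $B$, or $C\smallsetminus B$ is minimally simply algebraic over some $F\subseteq B$. In the simply algebraic case, because $B\leq N^{*}$ both ``no-obstruction'' hypotheses of (3'') are automatically met --- a negative-predimension $Y\subseteq N^{*}$ over $B$ in a sublanguage $\L'$ would contradict $B\vert\L'\leq N^{*}\vert\L'$, and $B\leq N^{*}$ forces $C\leq N^{*}\oplus_B C$ so the relevant $g$-values agree --- so (3''), which holds in $N^{*}$, supplies the maximal number of disjoint extensions over $F$ of the required form; since not all of them can lie inside $B$ (by the $\mu$-bound and $B\in\C^\zeta_\mu$), Lemma~\ref{disjointnessfromstrength} lets one read off a strong copy of $C$ over $B$ in $N^{*}$, exactly as the Strong Amalgamation Lemma was deduced from the Algebraic Amalgamation Lemma.

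The delicate case, which I expect to be the main obstacle, is the transcendental extension: one must exhibit in $N^{*}$ an element unrelated to $B$ whose addition keeps $B$ strong. This is what property (3') secures for $\M$ itself, and the crux is that (2') and (3'') force it even though (3') is not among the axioms. The key observation is that in any model of $T_0$ the ``predimension-algebraic closure'' of a finite set $B$ (the set of $z$ with $\delta(B\cup\{z\},N^{*})=\delta(B,N^{*})$) has cardinality at most $\vert\L\vert\le\aleph_0$: each such $z$ sits atop a tower of strong sets, each step a simply algebraic extension, at each stage there are only countably many possible forms and, by (2') together with Lemma~\ref{disjointnessfromstrength} applied at each (strong) level, only $\mu$-many realizations of each, whence only countably many towers of each finite length. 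Therefore a model of $\Th{N^{*}}$ of size $\aleph_1$ contains an element $z$ outside this closure; a direct predimension computation shows that such a $z$ must be unrelated to $B$ (otherwise $\delta(B\cup\{z\})$ would not exceed $\delta(B)$, making $z$ algebraic over $B$) and that $B\cup\{z\}$ is strong. Thus the ``free point over $B$'' type is consistent with $\Th{N^{*}}$, and being a type over the finite set $B$ it is realized in the $\aleph_0$-saturated $N^{*}$. Having established (2) and (3) for $N^{*}$, an Ehrenfeucht--Fra\"iss\'e back-and-forth along finite strong substructures --- closing under self-sufficient closure using (2), extending using (3), as in the uniqueness proof for the generic --- yields $N^{*}\equiv\M$, hence $N\equiv\M$. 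So $T_0=\Th\M$.

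Finally, model completeness follows formally: $T_0=\Th\M$ is $\forall\exists$-axiomatizable, has no finite models, and is $\aleph_1$-categorical --- being strongly minimal by Corollary~\ref{itisstronglyminimal} --- with $\aleph_1>\aleph_0\ge\vert\L\vert$; so by Lindstr\"om's test $\Th\M$ is model complete.
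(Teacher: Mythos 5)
Your proposal is correct, and it reaches the same endpoint (Lindstr\"om's test applied to a $\forall\exists$, $\aleph_1$-categorical theory with no finite models), but the completeness argument takes a genuinely different route than the paper's. The paper passes to an elementary extension $N'\succeq N$ containing a countable indiscernible sequence $I$; it observes that Lemma~\ref{same d is algebraic} holds for any model of (2'), so indiscernibility forces $d(\{x_i\mid i<k\})=k$ and hence every finite subset of $I$ is strong; it then takes a countable $M\preceq N'$ containing $I$, which satisfies (1), (2'), (3') (witnessed by $I$), and (3''), so $M\cong\M$ by Lemma~\ref{123equivprimes} and therefore $N\equiv\M$. You instead pass to an $\aleph_0$-saturated $N^*\succeq N$ and verify properties (2) and (3) for $N^*$ directly, with the transcendental case of (3) handled by a cardinality argument: the set of $z$ with $d(Bz)=d(B)$ is countable (your tower argument essentially re-derives that Lemma~\ref{same d is algebraic} holds over (2') together with countability of $\acl$ in a countable language), so in an $\aleph_1$-sized elementary extension there is a point unrelated to $B$ keeping $B$ strong, whence the free-point type is consistent and realized by saturation. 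Both work; the paper's use of indiscernibles avoids the cardinality bookkeeping and the detour through saturation, yielding a shorter argument, while yours makes fully explicit the reduction of property (3) to (3'') plus an existence statement for free points.

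One point worth tightening in your write-up: when you claim $B\leq N^*$ kills the first ``no-obstruction'' hypothesis of (3''), the reason is that for any sublanguage $\L'\subseteq\L$ and any $Y$, $\delta(Y\vert\L'/B\vert\L')\geq\delta(Y/B)\geq 0$, since restricting to a sublanguage only removes relation occurrences from the count; this is correct but should be stated rather than compressed into ``would contradict $B\vert\L'\leq N^*\vert\L'$'' (the latter is a consequence, not the reason).
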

\begin{proof}
Observe that Lemma 2.21 holds for any model satisfying (2').
Let $N$ be infinite and satisfy (2') and (3'').
Let $N'\succeq N$ contain a countable indiscernible sequence $I=(x_i)_{i\in \omega}$. We observe that $d(\{x_i\mid i<k\})=k$. Were this not the case, then we would have some $x_i\in \acl(\{x_j\mid j<i\})$ by Lemma 2.21, which cannot happen by indiscernibility. Thus every finite initial subsequence of $I$, and thus every finite subsequence of $I$ is strong in $N'$. Let $M\preceq N'$ be countable and contain $I$. Then $M$ satisfies (1), (2'), (3'), and (3''), with (3') witnessed by $I$. Thus $M\cong \mathcal{M}_\mu^\zeta$ by Lemma \ref{123equivprimes}. So $N\models \Th{\mathcal{M}_\mu^\zeta}$.
\\
By Lindstr\"om's test \cite[8.3.4]{BigHodges} and Lemmas \ref{elementaryconditions} and \ref{itisstronglyminimal}, $\Th{\M^\zeta_\mu}$ is model complete.
\end{proof}

\begin{cory}\label{firstTisflat}
$T$ is flat and non-disintegrated.
\end{cory}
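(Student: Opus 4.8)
\medskip
\noindent The corollary splits into two largely independent parts, both of which I would verify inside the saturated model $\M$ (this suffices, since flatness and disintegration are properties of the pregeometry of $T$; recall from Lemmas~\ref{d goes up is generic} and~\ref{same d is algebraic} and Corollary~\ref{itisstronglyminimal} that the dimension of a finite tuple of $\M$ is $d(\cdot)=\delta(\cdot,\M)$). For flatness, the plan is to reduce the flatness inequality for $\dim$ to the \emph{exact} submodularity of the predimension recorded in the Observation following the definition of $\C_0$, namely $\delta(U\cup V)\le\delta(U)+\delta(V)-\delta(U\cap V)$. This half depends only on $\delta$, not on $\mu$ or $\zeta$, and is precisely Hrushovski's proof that the pregeometry of a submodular predimension is flat (\cite{Udi}, also reproduced in \cite{A0n,A0w}). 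The reduction: every finite-dimensional $\acl$-closed $E\subseteq\M$ contains a finite $F$ with $\acl(F)=E$, and by Lemma~\ref{ssclosurealg} the self-sufficient closure $\langle F\rangle$ is a finite strong subset of $\M$, lying inside $E$, algebraic over $F$, with $\delta(\langle F\rangle)=\dim(E)$; choosing such finite pieces for all closed sets appearing in the inequality, large enough that the Boolean combinations among them carry the right dimensions, reduces the inequality to a statement about finitely many finite strong sets whose $\delta$-values are the $\dim$-values involved, and an induction on the number of sets built from the submodular inequality for $\delta$ then gives $\sum_s(-1)^{|s|}\dim(E_s)\le 0$.

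For non-disintegration, the plan is to exhibit in $\M$ an algebraic dependence spread over two independent coordinates. Fix the minimally simply algebraic extension $Y/X$ in which $X=\{x_1,x_2\}$ carries no relation, $Y=\{y\}$, and a single ternary relation $R(x_1,x_2,y)$ of $\L$ holds: then $\delta(Y/X)=0$, $X\leq X\cup Y$, $X\cup Y\in\C^\zeta_\mu$, and since $\delta(Y/\{x_i\})=1\ne 0$ and $\delta(Y/\emptyset)=1\ne 0$ this is indeed minimally simply algebraic with $|X|=2$. Since $\mu(X,Y,m)\ge\delta(X)=2$ for every $m$, property~(3) applied to $\{a_1,a_2\}\leq X\cup Y$ — taking $a_1,a_2$ to be two members of the independent set $I$ of~(3'), so $\{a_1,a_2\}\leq\M$ and $\{a_1,a_2\}\in\C^\zeta_\mu$ — produces $b\in\M$ with $\{a_1,a_2,b\}\leq\M$ isomorphic over $\{a_1,a_2\}$ to $X\cup Y$. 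Every $\{a_1,a_2\}$-conjugate of $b$ is an element $b'$ with $\{b'\}/\{a_1,a_2\}$ of the form of $Y/X$, and distinct such $b'$ are disjoint, so condition (2')(c) bounds their number and $b\in\acl(a_1,a_2)$. On the other hand, $\{a_1,a_2,b\}\leq\M$ forces $\{a_1,b\}\leq\M$ (every $C\subseteq\M$ containing $\{a_1,b\}$ meets $\{a_1,a_2,b\}$ in a set of predimension $2$, so $\delta(C)\ge 2$), whence $d(\{a_1,b\})=2>1=d(\{a_1\})$, so $b\notin\acl(a_1)$, and symmetrically $b\notin\acl(a_2)$. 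Hence $\acl(\{a_1,a_2\})\supsetneq\acl(a_1)\cup\acl(a_2)$, so $T$ is non-disintegrated.

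The one genuinely non-formal step is the reduction in the flatness half — choosing the finite strong approximations so that the inclusion--exclusion bookkeeping survives the passage from the (in general infinite) closed sets to finite self-sufficient subsets. For at most two sets this is immediate; in general it is exactly Hrushovski's flatness lemma, which I would cite rather than reprove. The non-disintegration half requires nothing delicate: only the routine verification that $X\cup Y\in\C^\zeta_\mu$ (needed to invoke property~(3)) — that $\delta\ge 0$ on every subset of $X\cup Y$, that the $\mu$-bound holds on $X\cup Y$ (automatic, since $g_{X\cup Y}(\{x_1,x_2\})=\infty$ and $\mu(X,Y,\infty)\ge 2$), and that $\zeta$ holds on $X\cup Y$.
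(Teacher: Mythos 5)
Your proposal is correct, and for flatness it takes essentially the paper's own route: cite Hrushovski's Lemma~15 for flatness of the $d$-pregeometry of a submodular predimension, and use Lemmas~\ref{d goes up is generic} and~\ref{same d is algebraic} to identify $d$ with the dimension function of the $\acl$-geometry of $\M$. You are right that the reduction from closed sets to finite strong approximants is exactly Hrushovski's argument and should be cited rather than reproved; the paper does the same.

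The one point worth flagging concerns the non-disintegration half. The paper's proof of Corollary~\ref{firstTisflat} is in fact silent on non-disintegration --- it addresses only flatness --- and the explicit $R(a,b,c)$ argument you give appears in the paper only later, as the proof of Lemma~\ref{Tflat}. So you have supplied an argument the paper leaves to a subsequent section, and your version matches it: fix $\{a_1,a_2,b\}\leq\M$ with $R(a_1,a_2,b)$ the unique relation; then $d(\{a_1,a_2,b\})=2=d(\{a_1,a_2\})$ gives $b\in\acl(a_1a_2)$ by Lemma~\ref{same d is algebraic} (your route via counting $\{a_1,a_2\}$-conjugates using (2')(c) is a slight detour but also fine), while $\{a_1,a_2,b\}\leq\M$ forces $d(\{a_i,b\})=2$, so $b\notin\acl(a_1)\cup\acl(a_2)$. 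You are also right to isolate the hypothesis this uses --- that $\L$ contains a relation symbol of arity $\geq 2$ whose corresponding one-edge configuration lies in $\C^\zeta_\mu$ --- since the general setup of Section~2 does not literally guarantee this (e.g.\ the construction over an empty or purely unary signature is disintegrated); the paper only makes this assumption explicit at the start of Section~\ref{Technicals}, so the corollary as placed is implicitly relying on it. Your verification that the single-edge structure lies in $\C^\zeta_\mu$ (via $\mu(X,Y,m)\geq\delta(X)=2$ and the trivial $\zeta$-check) is exactly what is needed.
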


\begin{proof}
As in \cite[Lemma 15]{Udi}, the geometry associated to any hypergraph via the $d$ function is flat. Lemmas \ref{d goes up is generic} and \ref{same d is algebraic} show that $d$ is the dimension function of the acl-geometry in $\M$.
\end{proof}

\section{Technical amalgamation facts}\label{Technicals}

In this section, we gather some facts about amalgamation that will be important in our particular construction of the theory whose spectrum of recursive models is $[0,n]\cup \{\omega\}$. In our language, we will have a ternary relation symbol. Thus below we will discuss hypergraphs with a ternary edge.

In the course of our construction, we will need two facts that this section will provide. Firstly, we will need, at one point in the construction of the recursive saturated model, to cause the dimension of a tuple to decrease while staying in the amalgamation class. In Corollary \ref{droppingdimensionsatwill}, we will do this by extending a finite structure in a non-strong way. Usually, Hrushovski amalgamation constructions only allow for strong amalgamations at any stage in the construction, so this requires some separate considerations, which we do in this section. Secondly, we will need a collection of extensions that are guaranteed to have the maximal possible number of occurrences over any base. We will call these unblockable extensions, and we show they exist in Corollary \ref{unblockablesexist}.

For the remainder of this section, we will work with $\C_{\mu}$. While it will be true that all of the particular structures that we mention in this section will satisfy the formula $\zeta$ that we use in our construction, we focus on only $\C_{\mu}$ in this section for the sake of generality and re-usability of these results. It is immediate that if the particular structures mentioned in this section satisfy $\zeta$, then our results in this section about $\C_{\mu}$ also hold for $\C^\zeta_{\mu}$.

We implicitly assume that $\L$ has a ternary relation symbol $R$, and that the isomorphism type of three elements with a unique ternary edge is an element of $\C_\mu$.

\begin{remark*}
In this section we do not use that $\mu(A,B,m)\geq \delta(A)$. Rather, we only use that $\mu$ is such that $\C_\mu$ satisfies the Strong Amalgamation Lemma.
\end{remark*}

\subsection{Dropping dimensions}
We introduce a particular type of extension $B$ over $A$ with $\delta(B/A)=-1$. We will use this below to decrease dimension by adding extensions of this type over particular tuples in our constructed structure.

\begin{defn}

	For $t>k\geq 2$, let $A=\set{a_1,\dots, a_k, g,h}$. For every $l>k$, by $a_l$ we mean $a_i$ where $i\equiv_k l$ and $1\leq i\leq k$. Let $B=B_1\cup B_2$ where $B_1 = \set{b_1,\dots, b_{t}, b_{t+1}}$, $B_2 = \set{b_{t+1},\dots, b_{2t}, b_1}$ and let $R = R_1\cup R_2\cup R_3$ where
	\begin{gather*}
	R_1 = \setcol{(b_i,a_i,b_{i+1})}{0< i \leq t}
	\\
	R_2 = \setcol{(b_i,a_i,b_{i+1})}{t< i< 2t} \cup\set{(b_{2t}, g, b_1)}
	\\
	R_3 = \set{(b_1,h,b_{t+1})}.
	\end{gather*}
	Let $D_t$ be the hypergraph with vertex set $A\cup B$ and edge set $R$.
\end{defn}

Here we classify the extensions $B_0$ over $A_0$ with $\delta(B_0/A_0)\leq 0$ where $B_0\subseteq B$ and $A_0\subseteq A$.

\begin{lem}
	\label{possible zeros in D_t}
	If $A_0\subseteq A$, $\emptyset\neq B_0\subseteq B$ are such that $\delta(B_0/A_0)\leq 0$, then at least one of the following holds:
	\begin{enumerate}
		\item
		$A_0 \supseteq A\setminus\set{h}$ and $B_0 = B$
		\item
		$A_0 \supseteq A\setminus\set{g}$ and $B_0 \supseteq B_1$
		\item
		$A_0 = A$ and $B_0 \supseteq B_2$
	\end{enumerate}
	Moreover, if $\delta(B_0/A_0)<0$, then $A_0=A$ and $B_0 = B$.
\end{lem}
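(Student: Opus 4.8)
The plan is to compute $\delta(B_0/A_0)$ combinatorially by counting vertices and edges, and to deduce the structural constraints from the requirement that this quantity be $\le 0$ (resp. $<0$). Write $B_0 = B \cap (B_0)$ and note that every vertex of $B_0$ contributes $+1$, while every edge of $D_t$ with all three endpoints in $A_0 \cup B_0$ but not all three in $A_0$ contributes $-1$. So $\delta(B_0/A_0) = |B_0| - (\text{number of such edges})$. The key observation is the cyclic structure of the edge set: the edges in $R_1 \cup R_2$ form a single cycle $b_1 - b_2 - \cdots - b_{2t} - b_1$ on the $2t$ vertices of $B$ (each edge carrying an extra vertex from $A$), and $R_3$ adds one chord $(b_1, h, b_{t+1})$. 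First I would record, for each $b_i \in B_0$, which of its incident $B$-$B$ edges are ``present'' (i.e. have their other $B$-endpoint also in $B_0$ and their $A$-vertex in $A_0 \cup B_0 = A_0 \cup B_0$, but since the $A$-vertex is always in $A$, this just asks whether that $A$-vertex lies in $A_0$).

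The main step is a counting/averaging argument. Suppose $B_0 \neq \emptyset$ and $\delta(B_0/A_0) \le 0$, i.e. the number of present edges is $\ge |B_0|$. Each present edge has two $B$-endpoints in $B_0$, so summing over edges, $\sum_{b \in B_0} (\text{number of present edges at } b) \ge 2|B_0|$; hence on average each vertex of $B_0$ sees $\ge 2$ present edges. Since in $D_t$ each $b_i$ has at most three incident edges ($b_i$ sits on at most two cycle-edges, plus $b_1$ and $b_{t+1}$ each additionally on the chord), and ``degree $2$ everywhere'' forces $B_0$ to be a union of full cycles in the incidence structure, I would argue that $B_0$ must be closed under ``following the cycle'': if $b_i \in B_0$ and the two cycle-edges at $b_i$ are to be present, then $b_{i-1}, b_{i+1} \in B_0$ and the relevant $a$'s lie in $A_0$. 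Chasing this around the $2t$-cycle shows $B_0 \supseteq B_1$ or $B_0 \supseteq B_2$ (depending on which arcs are forced), and in the tight case $B_0 = B$. The precise bookkeeping of the deficiency introduced by the two vertices $b_1, b_{t+1}$ of degree $3$ versus the chord edge is what separates the three enumerated cases: roughly, one ``extra'' present edge at $b_1$ or $b_{t+1}$ (the chord) buys room to drop one $a_i$ from $A_0$, giving cases (2) and (3) where $g$ or nothing may be omitted, while case (1) uses the chord itself and allows $h$ to be omitted — wait, rather: if the chord $R_3$ is not present then $h \notin A_0$ is allowed but then the cycle must be fully present, forcing $B_0 = B$ and $A_0 \supseteq A \setminus \{h\}$, which is case (1).

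For the ``moreover'' clause, the inequality is now strict: the number of present edges is $\ge |B_0| + 1$, so the averaging gives strictly more than $2$ incident present edges on average, which (given the degree-$3$ cap attained only at $b_1, b_{t+1}$) forces the chord to be present, every cycle-edge to be present, hence $B_0 = B$ and $A_0 = A$. I expect the main obstacle to be the careful case analysis at the two special vertices $b_1$ and $b_{t+1}$: one must track exactly how omitting a single $a_i$ from $A_0$ or a single $b_j$ from $B_0$ changes the edge count, and verify that no configuration other than the three listed can achieve $\delta(B_0/A_0) \le 0$. A clean way to organize this is to let $P$ be the set of present edges, observe $\delta(B_0/A_0) = |B_0| - |P|$, and analyze the graph on $B_0$ with edge set (the $B$-$B$ projections of) $P$: it has $|P| \ge |B_0|$ edges on $|B_0|$ vertices, so it contains a cycle, and the only cycles available in the incidence pattern of $D_t$ are the ``big'' cycle through all of $B$, the cycle $B_1$ (closed via the chord), and the cycle $B_2$ (closed via the chord) — yielding exactly the three cases, with the strict case eliminating $B_1$ and $B_2$ alone since each of those, as a cycle, has exactly $|B_i| = t+1$ edges, giving $\delta = 0$, not $< 0$.
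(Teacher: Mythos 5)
Your final paragraph is the right argument, and it is a genuinely different (and arguably cleaner) organization than the paper's. Projecting each hyperedge of $D_t$ to its pair of $B$-endpoints turns $D_t$ into the $2t$-cycle $b_1-b_2-\cdots-b_{2t}-b_1$ plus the chord $b_1-b_{t+1}$, i.e.\ a theta graph. With $P$ the set of edges whose two $B$-endpoints lie in $B_0$ and whose $A$-vertex lies in $A_0$, one has $\delta(B_0/A_0)=|B_0|-|P|$, so $\delta\le 0$ gives $|P|\ge|B_0|$ edges on $|B_0|$ vertices, hence a cycle; the theta graph has exactly three cycles, and the $A$-vertices on each of those cycles are precisely what force $A_0\supseteq A\setminus\{h\}$, $A_0\supseteq A\setminus\{g\}$, or $A_0=A$ in cases (1), (2), (3). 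This is a global cycle-counting argument. The paper instead reduces to a $\subseteq$-minimal $B_0$ with $\delta(B_0/A_0)\le 0$, observes that minimality forces every $b_i\in B_0$ to lie on at least two present edges, and chases this local degree-$2$ condition around the cycle. Both proofs are exploiting the same theta-graph structure; yours replaces the minimality/degree argument with the standard fact that $|E|\ge|V|$ forces a cycle, which I find more transparent.

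Two caveats. First, the averaging step in your middle paragraph does not by itself give ``degree $\ge 2$ everywhere,'' only on average; the chase-around-the-cycle sketch there needs either the paper's minimality device or your own cycle argument to become rigorous, so the middle should really be discarded in favor of the last paragraph. Second, for the ``moreover'' clause you say only that the two chord cycles each give $\delta=0$; that rules out $B_0=B_1$ or $B_0=B_2$ exactly but does not immediately rule out, say, $B_1$ plus a few extra arc vertices. What finishes it is the observation that the theta graph has cyclomatic number $2$ and every proper subgraph has cyclomatic number $\le 1$ (removing an edge drops $|E|-|V|$ by one, removing a vertex removes at least two edges), so $|P|\ge|B_0|+1$ forces $B_0=B$ with all edges present, whence $A_0=A$. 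With that addition, your proof is complete.
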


\begin{proof}
	Since we are trying to show that $B_0$ is large (i.e. contains either $B$, $B_1$ or $B_2$, depending on the case), we can assume that there is no proper subset $B_0'\subset B_0$ which has $\delta(B_0'/A_0)\leq 0$. 
	For every $b_i\in B$ clearly $\delta(b_i/A_0) = 1$, so $|B_0|>1$. Then for every $b_i\in B_0$, the element $b_i$ must appear in at least two relations in $A_0\cup B_0$ or else $B_0\setminus\set{b_i}$ contradicts the minimality of $B_0$.
	Now, for every $i\notin\set{1,t+1}$, if $b_i\in B_0$, then $(b_{i-1},a_{i-1},b_i)$, $(b_i,a_i,b_{i+1})$ are edges in $A_0\cup B_0$ (if $i=2t$, then take $a_i = g$ and $b_{i+1} = b_1$). For $i\in\set{1,t+1}$, if $b_i\in B_0$, then at least one of $b_{i-1}$, $b_{i+1}$ (for $i=1$, take $b_{i-1}$ to be $b_{2t}$) must be an element of $B_0$. Hence, $R[A_0\cup B_0]$, the restriction of $R$ to $A_0\cup B_0$, must contain at least one of $R_1$, $R_2$. In particular, $b_1,b_{2t}\in B_0$. Since $t\geq k+1$, also in any case $A\setminus\set{g,h}\subseteq A_0$. Furthermore, if $R_2\subseteq R[A_0\cup B_0]$, then also $g\in A_0$. I.e., if $g\notin A_0$, then $R_1\subseteq R[A_0\cup B_0]$.
	
	If $h\notin A_0$, then as $b_1\in B_0$ and $b_1$ must appear in two relation, we have $b_2,b_{2t}\in B_0$, and consequently $R_1,R_2\subseteq R[A_0\cup B_0]$. This is case (1), so we may assume $h\in A_0$. Now, if $g\notin A_0$, then as we've seen we must be in case (2). Finally, assume we are not in cases (1) or (2), in particular $A=A_0$. Since $B_1\nsubseteq B_0$, also $R_1\nsubseteq R[A_0\cup B_0]$. Then $R_2\subseteq R[A_0\cup B_0]$ and we are in case (3).
	
	Now for the additional part, assume instead that $B_0$ is minimal such that $\delta(B_0/A_0)< 0 $. Again, if $b_i\in B_0$ then it must appear in at least two relations in $R[A_0\cup B_0]$. By what we've shown, for some $i\in \set{1,2}$, $B_i\subseteq B_0$. Observe that $\delta(B_i/A_0)\geq \delta(B_i/A) = 0$, so there exists $b_j\in B_0\setminus B_i$. As before, this implies that also $B_{3-i}\subseteq B_0$, i.e., $B_0 = B$. A short check yields that for any $A\setminus \set{g,h} \subseteq X \subset A$, we have $\delta(B/X)\geq 0$. So $A_0=A$ and indeed $\delta(B/A) < 0$.
\end{proof}

Now we show that $D_t$ is in $\C_{\mu}$. All we are assuming about $\mu$ is that the isomorphism type of three elements with a unique ternary edge is an element of $\C_{\mu}$.

\begin{lem}
	\label{if edges than D_t is classy}
	$D_t\in \C_{\mu}$.
\end{lem}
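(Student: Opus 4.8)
The plan is to verify the two defining requirements for membership in $\C_\mu$ separately: first that $D_t\in\C_0$, i.e., $\delta(X)\geq 0$ for every $X\subseteq D_t$, and second that $D_t$ respects the $\mu$-bound, i.e., for every minimally simply algebraic extension type $Y/X$ there are no more than $\mu(X,Y,g_{D_t}(F))$ disjoint copies of $Y/X$ over any base $F$. The work of Lemma \ref{possible zeros in D_t} will do essentially all the heavy lifting for both.

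For the $\C_0$ part, I would take an arbitrary $X\subseteq A\cup B$ and write $X=A_0\cup B_0$ with $A_0=X\cap A$, $B_0=X\cap B$. If $B_0=\emptyset$ then $\delta(X)=\abs{A_0}\geq 0$ trivially. Otherwise $\delta(X)=\delta(A_0)+\delta(B_0/A_0)=\abs{A_0}+\delta(B_0/A_0)$, and by the ``moreover'' clause of Lemma \ref{possible zeros in D_t}, $\delta(B_0/A_0)<0$ forces $A_0=A$ and $B_0=B$; a direct count then gives $\delta(D_t)=\abs{A}+\abs{B}-\abs{R}$. Here $\abs{A}=k+2$, $\abs{B}=2t$, and $\abs{R}=\abs{R_1}+\abs{R_2}+\abs{R_3}=t+t+1=2t+1$, so $\delta(D_t)=k+2+2t-(2t+1)=k+1\geq 3>0$. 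And whenever $\delta(B_0/A_0)\geq 0$ we are done since $\abs{A_0}\geq 0$. So $D_t\in\C_0$; in fact every proper subset has strictly positive predimension except possibly some with $\delta=0$, which is fine.

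For the $\mu$-bound, suppose $F,C^1,\dots,C^r$ are disjoint subsets of $D_t$ with each $C^i/F$ minimally simply algebraic (in particular $\delta(C^i/F)=0$). Since $C^i\subseteq A\cup B$, write $F=F_A\cup F_B$ and note $C^i$ is disjoint from $F$. The key point is that each $C^i$ has a nonempty part in $B$: if $C^i\subseteq A$, then $\delta(C^i/F)=\abs{C^i}>0$ since $A$ carries no edges, contradicting $\delta(C^i/F)=0$. So each $C^i\cap B\neq\emptyset$, and applying Lemma \ref{possible zeros in D_t} with $A_0=F_A\cup(C^i\cap A)$... — actually, more carefully, I would apply it to $A_0=F\cap A$ and $B_0 = C^i\cap B$ after checking $\delta(C^i\cap B / F\cap A)\leq 0$ (which follows from $\delta(C^i/F)=0$ together with $\delta$ being subadditive and the $A$-part contributing nonnegatively). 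Lemma \ref{possible zeros in D_t} then tells us $B_0$ contains one of $B$, $B_1$, $B_2$, all of which meet the ``spine'' $\{b_1,b_{2t}\}$. Since the $C^i$ are pairwise disjoint and each contains at least one of $b_1,b_{2t}$, there can be at most $r\leq 2$ of them. The requirement on $\mu$ is that $\mu(X,Y,m)\geq\delta(X)$, and in any Hrushovski class the number of disjoint copies being at most $2$ is trivially within the bound once one checks $\mu(X,Y,g_{D_t}(F))\geq 1$ for the relevant base — but in fact we need $r\leq\mu$, and since $\mu\geq\delta(X)=\delta(F)=\abs{F}$ and any such $F$ realizing a copy must be nonempty with $\abs{F}\geq 1$... hmm.

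\textbf{Main obstacle.} The delicate point — and the step I expect to require the most care — is ruling out $r=2$ when $\mu(X,Y,g_{D_t}(F))=1$, or more precisely confirming that whenever two disjoint copies $C^1,C^2$ of the same $Y/X$ genuinely exist in $D_t$, the value $\mu(X,Y,g_{D_t}(F))$ is actually $\geq 2$. For this I would argue that if two disjoint copies $C^1, C^2$ both exist, then by the case analysis one contains $B_1$-type structure and the other $B_2$-type (they cannot both contain $B$, as $B$ itself is the whole of $B_0$ in case (1) and they'd overlap), and crucially the base $F$ in these cases is small — by the ``$A_0\supseteq A\setminus\{g\}$'' etc. clauses, the base is not strong in $D_t$ in a way controlled by few relations, so $g_{D_t}(F)$ is small; combined with $\mu(X,Y,m)\geq\delta(X)$ and noting the structures $X$ involved have $\delta(X)=\abs{X}\geq$ the relevant bound, I would conclude $\mu(X,Y,g_{D_t}(F))\geq 2$ in exactly the cases two copies exist. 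The cleanest route, which I would pursue first, is to observe that the only way to get two disjoint copies is the pair (case 2 copy, case 3 copy), that these force $F\supseteq A\setminus\{g,h\}$ so $\abs{F}\geq k\geq 2$, hence $\delta(F)\geq 2$ (no edges inside $A$), hence $\mu(X,Y,g_{D_t}(F))\geq\delta(X)=\delta(F)\geq 2\geq r$. If there is only one copy, there is nothing to check since $\mu\geq\delta(X)\geq 1$. This settles the $\mu$-bound and completes $D_t\in\C_\mu$.
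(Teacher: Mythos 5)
Your proposal takes a genuinely different route from the paper. The paper does \emph{not} directly verify the $\mu$-bound inside $D_t$. Instead, it first shows that every subset $X\subseteq D_t$ with $\abs{X}\geq 2$ has $\delta(X)\geq 2$, and then strongly embeds $D_t$ into the generic $\M_\mu$ one edge at a time (using that the isomorphism type of a single ternary edge lies in $\C_\mu$ and that $\{b_j,a_j\}$, having $\delta=2$, is automatically strong in whatever has been built so far). Once $D_t\leq\M_\mu$ is established, $D_t\in\C_\mu$ follows immediately from property (2) of the generic. This neatly sidesteps the entire case analysis you are attempting, and also (as the remark at the start of that section emphasizes) avoids using the hypothesis $\mu(A,B,m)\geq\delta(A)$ on which your argument leans.

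Your $\C_0$ check is fine. The $\mu$-bound check, however, has two concrete gaps, and they break the argument rather than just leaving loose ends. First, the claim that each $C^i$ meets $B$ is false: you argue that $C^i\subseteq A$ would force $\delta(C^i/F)=\abs{C^i}>0$ because $A$ carries no internal edges, but that ignores the possibility that $F$ has a part in $B$. Concretely, take $F=\{b_1,b_{t+1}\}$ and $C^1=\{h\}$: the edge $(b_1,h,b_{t+1})$ gives $\delta(C^1/F)=0$, and this is a minimally simply algebraic extension with $C^1\cap B=\emptyset$. Second, even when $C^i\cap B\neq\emptyset$, the inequality $\delta(C^i\cap B/F\cap A)\leq 0$ that you want in order to invoke Lemma \ref{possible zeros in D_t} does not follow from $\delta(C^i/F)=0$ by subadditivity. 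Take $F=\{a_{j-1},b_{j-1}\}$ and $C^1=\{b_j\}$: the edge $(b_{j-1},a_{j-1},b_j)$ gives $\delta(C^1/F)=0$ and $C^1/F$ is minimally simply algebraic, but $\delta(C^1\cap B/F\cap A)=\delta(\{b_j\}/\{a_{j-1}\})=1>0$ since the only relevant edge needs $b_{j-1}\in F\cap B$. So the conclusion that each $C^i\cap B$ contains one of $B$, $B_1$, $B_2$ is simply wrong, and with it the derived bound $r\leq 2$ and the subsequent $\delta(F)\geq 2$ argument. If you want to pursue a direct verification you would have to handle small-base, small-extension configurations of the kind above separately, and this is exactly the bookkeeping the paper's strong-embedding argument is designed to avoid.
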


\begin{proof}
	First we show that no subset of $D_t$ has $\delta(X)<2$ unless $\abs{X}\leq 1$. Assume to the contrary that $|X|\geq 2$, $\delta(X) < 2$. Then there must be edges in $X$, hence $X\cap A$ and $X\cap B$ are non-empty. $\delta(D_t) > 1$, so $X\neq D_t$. Then by Lemma \ref{possible zeros in D_t}, $\delta(X\cap B/X\cap A) \geq 0$. So, $1-\delta(X\cap A) \geq \delta(X)-\delta(X\cap A)\geq 0$ which means we must have $1\geq \delta(X\cap A) =|X\cap A|\geq 1$ since there are no relations holding in the set $A$. But then again by Lemma \ref{possible zeros in D_t}, $\delta(X) >\delta(X\cap A) = 1$ in contradiction to $\delta(X)<2$.
	
	We show $D_t$ embeds strongly into $\M_\mu$. Embed $\set{a_1,\dots, a_k, b_1}$ onto an independent subset of $\M_\mu$. Because the isomorphism type of a ternary edge is in $\C_\mu$, by genericity of $\M_\mu$, any two independent points extend to an edge. Embed $b_2$ onto an extension of $b_{1}, a_1$ to an edge. Now embed $b_3$ onto an extension of $b_2, a_2$ to an edge. Continue in this manner until $b_{2t}$ has been embedded. Now embed $g$ onto an extension of $b_{2t}, b_1$ to an edge and $h$ onto an extension of $b_1, b_{t+1}$ to an edge. Observe that each embedded point must be new. Now, since $D_t\strong\M_{\mu}$, we have $D_t\in \C_{\mu}$ by property (2) of $\M_{\mu}$.
\end{proof}

\begin{obs}
	\label{obs: points in msa are in relations}
	Let $C$ be minimally simply algebraic over $F$. 
	From the definition of a minimally simply algebraic extension, it follows that every element of $F$ appears in at least one edge in $F\cup C$ that is not an edge of $F$, and every element of $C$ appears in at least two edges in $F\cup C$ (unless $\abs{C}=1$).
\end{obs}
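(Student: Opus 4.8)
The plan is to unwind the definition of a minimally simply algebraic extension directly, proving the two assertions separately, each by contradiction, using only pre-dimension arithmetic together with the three defining clauses: $F\leq F\cup C$, $\delta(C/F)=0$, and no proper non-empty $C'\subseteq C$ has $\delta(C'/F)=0$ (plus, for the first assertion, minimality of $F$). Almost everything is bookkeeping with $\delta(\cdot)=\abs{\cdot}-\#R(\cdot)$; the one point that needs genuine care is re-verifying self-sufficiency after shrinking the base, which I expect to be the main obstacle.

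For the statement about elements of $C$, suppose $\abs{C}\geq 2$ and that some $c\in C$ lies in at most one edge of $F\cup C$. Let $e\in\{0,1\}$ be the number of edges of $F\cup C$ through $c$ and put $C'=C\setminus\{c\}$, which is non-empty since $\abs{C}\geq 2$. Deleting the single vertex $c$ removes exactly $e$ edges, so $\delta(F\cup C')=\delta(F\cup C)-1+e$ and hence $\delta(C'/F)=\delta(C/F)-1+e=e-1$. If $e=0$ this is $-1<0$, contradicting $F\leq F\cup C$ (which forces $\delta(C'/F)=\delta(F\cup C')-\delta(F)\geq 0$); if $e=1$ this is $0$ with $\emptyset\neq C'\subsetneq C$, contradicting the clause that no proper non-empty subset of $C$ has pre-dimension $0$ over $F$. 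Either way we have a contradiction, so every element of $C$ lies in at least two edges.

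For the statement about elements of $F$, suppose some $f\in F$ appears only in edges whose vertices all lie in $F$; the plan is to show $C$ is then simply algebraic over $F':=F\setminus\{f\}$, contradicting minimality of $F$. Let $e_f$ be the number of edges of $F$ through $f$. Since every edge through $f$ stays inside $F$, for any $C'\subseteq C$ deleting $f$ removes exactly $e_f$ edges from both $F\cup C'$ and $F$, so $\delta(F'\cup C')=\delta(F\cup C')-1+e_f$ and $\delta(F')=\delta(F)-1+e_f$, whence $\delta(C'/F')=\delta(C'/F)$ for every $C'\subseteq C$; in particular $\delta(C/F')=0$ and no proper non-empty subset of $C$ has pre-dimension $0$ over $F'$. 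The remaining, and main, point is $F'\leq F'\cup C$: given $F'\subseteq X\subseteq F'\cup C$, apply $F\leq F\cup C$ to the set $X\cup\{f\}$ (which lies between $F$ and $F\cup C$), observing that every edge of $F$ through $f$ already lies in $X\cup\{f\}$ since $X\supseteq F\setminus\{f\}$, so $\delta(X\cup\{f\})=\delta(X)+1-e_f\geq\delta(F)=\delta(F')+1-e_f$, giving $\delta(X)\geq\delta(F')$. This yields $F'\leq F'\cup C$ and completes the contradiction. (The hypothesis $\abs{C}\geq 2$ in the first assertion is explicitly excluded; and if $F$ happens to be a singleton the reduction lands at $F'=\emptyset$, which is harmless since $\emptyset\leq C$ always holds, so no separate case analysis is needed.)
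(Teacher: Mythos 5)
The paper states this as an unproved observation, so the comparison is against the implicit ``follows from the definition'' argument. Your proof is correct, and it is exactly the bookkeeping the paper has in mind: the $C$-part uses $\delta(C'/F)\geq 0$ (from $F\leq F\cup C$) and the no-proper-zero clause to rule out vertices of degree $0$ or $1$ in $C$, and the $F$-part removes a vertex of $F$ touching no new edge and checks that all four clauses of simple algebraicity survive for $F'=F\setminus\{f\}$, contradicting minimality of $F$. You correctly identified the only non-routine step, namely re-verifying $F'\leq F'\cup C$, and your reduction via $X\cup\{f\}$ works precisely because the hypothesis ensures every edge through $f$ already lies in $X\cup\{f\}$ once $F'\subseteq X$. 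The remark about $F'=\emptyset$ being harmless is also right, since $\emptyset\leq C$ holds for any $C\in\C_0$.
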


\begin{obs}
	\label{obs: if F is not in B_2 then r is restricted by valency}
	Suppose $E=B_1\oplus_A B_2$ where every point in $B_1\setminus A$ appears in at most $k$ edges in $B_1$. Let $F, C^1,\dots, C^r\subseteq E$ be disjoint such that each $C^i/F$ is a minimally simply algebraic extension. If $F\nsubseteq B_2$, then it is immediate from Observation \ref{obs: points in msa are in relations} that $r\leq k$.
\end{obs}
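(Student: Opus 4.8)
The plan is to pick one vertex of $F$ that lies outside $B_2$ and bound $r$ by counting the edges of $B_1$ through that vertex. First I would observe that since $E=B_1\oplus_A B_2$ we have $E\smallsetminus B_2=B_1\smallsetminus A$, so the hypothesis $F\nsubseteq B_2$ produces a point $p\in F\cap(B_1\smallsetminus A)$. Because the join is free over $A$, every edge of $E$ lies entirely inside $B_1$ or entirely inside $B_2$; since $p\notin B_2$, no edge of $B_2$ contains $p$, and therefore every edge of $E$ through $p$ is already an edge of $B_1$ --- of which, by hypothesis, there are at most $k$.

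Next I would extract from each $C^i$ a distinct edge through $p$. For a fixed $i\leq r$, since $C^i/F$ is a minimally simply algebraic extension and $p\in F$, Observation \ref{obs: points in msa are in relations} gives an edge $e_i$ of $F\cup C^i$ containing $p$ that is not an edge of $F$; as it is an edge of $F\cup C^i$ but not of $F$, it must use a vertex of $C^i$. The one point needing care is that $e_1,\dots,e_r$ are pairwise distinct: if $e_i=e_j$ with $i\neq j$, then every vertex of that common edge lies in $(F\cup C^i)\cap(F\cup C^j)$, which is exactly $F$, since the $C^\ell$ are pairwise disjoint and each is disjoint from $F$ (being simply algebraic over $F$); but then it would be an edge of $F$, contradicting its choice. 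Hence $e_1,\dots,e_r$ are $r$ distinct edges of $E$ through $p$, each an edge of $B_1$, and so $r\leq k$.

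There is no serious obstacle here; the argument is essentially immediate once one notes that the free-join structure confines every edge through $p$ to $B_1$, and then performs the disjointness bookkeeping that forces the $e_i$ apart. The only thing worth double-checking is that Observation \ref{obs: points in msa are in relations} applies uniformly --- in particular that its conclusion about elements of $F$ (as opposed to elements of $C^i$) holds even when $\abs{C^i}=1$, which is exactly how that observation is phrased.
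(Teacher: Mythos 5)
Your argument is correct and is exactly the one the paper has in mind when it says the claim is ``immediate'': pick $p\in F\cap(B_1\smallsetminus A)$, note that freeness of the join confines every edge through $p$ to $B_1$, use the first clause of Observation \ref{obs: points in msa are in relations} to extract an edge $e_i$ through $p$ in $F\cup C^i$ but not in $F$ for each $i$, and use disjointness of the $C^i$ to see the $e_i$ are distinct. Your final caveat is handled correctly: the first clause of Observation \ref{obs: points in msa are in relations} (about elements of $F$) carries no $\abs{C}=1$ exception --- only the second clause (about elements of $C$) does --- so the argument is uniform.
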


\begin{defn}
	Say that $\mu$ is \emph{$k$-permissive} if $\mu(A,B,m)\geq k$ whenever $B$ is minimally simply algebraic over $A$ and $m\in \omega\cup \{\infty\} $.
\end{defn}

\begin{lem}
	\label{t big enough to stay classy}

	Suppose $\mu$ is 3-permissive and $M\in \C_{\mu}$. Fix $\bar{c}\in M^{k+2}$ with $k\geq 2$, $\delta(\bar{c},M)>0$. Label $\bar{c}=(a_1,\ldots, a_k,g,h)$. 
	For each $t$, let $E_t$ be the free join of $M$ and $D_t$ over $\bar{c}$. Then $E_t\in\C_\mu$ for all large enough $t$.
\end{lem}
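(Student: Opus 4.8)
The goal is to show $E_t = M \oplus_{\bar c} D_t \in \C_\mu$ for all sufficiently large $t$. Since $D_t \in \C_\mu$ by Lemma~\ref{if edges than D_t is classy} and $\bar c \le D_t$ fails in general (indeed $\bar c$ is not strong in $D_t$), I cannot invoke the Strong Amalgamation Lemma directly; the free join must be checked by hand. There are two things to verify: that $E_t \in \C_0$, i.e. $\delta(X) \ge 0$ for every finite $X \subseteq E_t$; and that $E_t$ respects the $\mu$-bound, i.e. for every minimally simply algebraic type $Y/X$ and disjoint $F, C^1,\dots,C^r \subseteq E_t$ with each $C^i/F$ of the form of $Y/X$, we have $r \le \mu(X,Y,g_{E_t}(F))$.

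For $E_t \in \C_0$: given finite $X \subseteq E_t$, write $X_M = X \cap M$, $X_D = X \cap D_t$, $X_0 = X \cap \bar c$. Since $X = X_M \oplus_{X_0} X_D$ (the join is free), $\delta(X) = \delta(X_M) + \delta(X_D/X_0)$. We have $\delta(X_M) \ge 0$ since $M \in \C_0$. For $\delta(X_D/X_0)$: by Lemma~\ref{possible zeros in D_t}, the only subsets $B_0 \subseteq B$ with $\delta(B_0/A_0) < 0$ require $A_0 = A = \bar c$ and $B_0 = B$, in which case $\delta(D_t/\bar c) = -1$; and using $\delta(\bar c, M) > 0$, i.e. $d(\bar c) \ge 1$ (here I use $k \ge 2$ so $|\bar c| = k+2 \ge 4$ but really just that $\delta(\bar c, M)>0$), the full structure $\bar c \cup (D_t \setminus \bar c)$ is still compensated: $\delta(X) \ge d(\bar c) + \delta(D_t/\bar c) \ge 1 + (-1) = 0$ when $X \supseteq \bar c$ and $X_D = D_t$, while in all other cases $\delta(X_D/X_0) \ge 0$ directly from Lemma~\ref{possible zeros in D_t} so $\delta(X) \ge 0$. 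One must also handle $X_M \not\supseteq \bar c$: then replace $X_M$ by $X_M \cup \bar c$ inside the self-sufficient closure in $M$; the point is that any $X$ witnessing $\delta(X)<0$ would have to contain all of $\bar c$, reducing to the previous case. This part is routine bookkeeping with Lemma~\ref{possible zeros in D_t}.

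The $\mu$-bound is the main obstacle, and this is where largeness of $t$ enters. Given disjoint $F, C^1, \dots, C^r$ in $E_t$ with each $C^i/F$ a minimally simply algebraic extension of a fixed form, I would split into cases according to where $F$ and the $C^i$ sit. If $F \cup \bigcup C^i \subseteq M$ or $\subseteq D_t$, we are done since $M, D_t \in \C_\mu$ and $M \le E_t$, $g$-values are preserved (Observation~\ref{gSameStrong}); one should note $D_t$ need not be strong in $E_t$, but $F \subseteq D_t$ with $F\cup\bigcup C^i \subseteq D_t$ and $D_t \in \C_\mu$ still gives $r \le \mu(X,Y,g_{D_t}(F))$, and $g_{D_t}(F) \le g_{E_t}(F)$ is false in general — so here I would instead argue that $F \not\le E_t$ is witnessed already inside $D_t$, or handle the at-most-one "boundary" extension as in the Algebraic Amalgamation Lemma. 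The genuinely new case is when $F$ straddles $M$ and $D_t \setminus \bar c$, or $F \subseteq M$ but some $C^i$ meets $D_t \setminus \bar c$. Here I invoke the structure of $D_t$: by Observation~\ref{obs: points in msa are in relations} every point of $C^i$ lies in at least two edges of $F \cup C^i$, and every point of $F$ lies in at least one new edge. In $D_t$ the edge set $R$ is essentially two long disjoint cycles $R_1, R_2$ plus the single "chord" edge $R_3 = \{(b_1,h,b_{t+1})\}$. A simply algebraic extension $C^i/F$ with $C^i$ meeting the interior of $D_t$ must "use up" a contiguous arc of one of these cycles; since $F$ and the $C^i$ are disjoint and each $C^i$ must contain a whole arc bounded by points of $F$, and since $|F|$ is bounded (it has fixed isomorphism type $X$), for $t$ large the cycles are too long to support more than $\delta(F)$-many such disjoint arcs plus whatever comes from the $M$-side. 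Precisely: I would show that for $t > t_0(X,Y)$, any such configuration forces $r \le \max(\delta(F), \mu\text{-bound from the }M\text{-side})$, using Observation~\ref{obs: if F is not in B_2 then r is restricted by valency} (each interior point of $D_t$ has valency $\le 3$, so if $F \not\subseteq M$ then $r \le 3 \le \mu(X,Y,g_{E_t}(F))$ by 3-permissiveness) and, when $F \subseteq M$, a counting argument that a $C^i$ dipping into $D_t \setminus \bar c$ must contain a point of $D_t \setminus \bar c$ of valency $\le 3$ all of whose neighbors are forced, again capping $r$ by $3$.

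So the key steps in order are: (i) reduce $\C_0$-membership to Lemma~\ref{possible zeros in D_t} plus $\delta(\bar c, M) > 0$; (ii) dispatch the cases where everything lies on one side via $M, D_t \in \C_\mu$ and the Algebraic Amalgamation Lemma's boundary analysis; (iii) for configurations straddling the join, use the valency-$\le 3$ structure of $D_t$ (Observations~\ref{obs: points in msa are in relations}, \ref{obs: if F is not in B_2 then r is restricted by valency}) together with $3$-permissiveness of $\mu$ to bound $r \le 3$; (iv) conclude that choosing $t$ larger than a bound depending only on the finitely many relevant minimally simply algebraic types (those with $|Y| \le |D_t|$-ish, but really only those realizable across the join, whose bases have bounded size) makes all cases work. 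The hard part is step (iii)/(iv): making precise that the long-cycle structure of $D_t$ blocks any "efficient" simply algebraic extension from straddling the amalgam once $t$ is large, and verifying the bound on $t$ depends only on the data $(\bar c, M)$ and not on the unbounded family of possible extension types.
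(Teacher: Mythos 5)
Your overall architecture is reasonable — reduce $\C_0$-membership to Lemma~\ref{possible zeros in D_t} plus $\delta(\bar c,M)>0$, then attack the $\mu$-bound by cases on where $F$ sits — and your handling of the case $F\not\subseteq M$ via Observations~\ref{obs: points in msa are in relations} and \ref{obs: if F is not in B_2 then r is restricted by valency} (every point of $D_t\setminus\bar c$ has valency at most $3$, so $F\not\subseteq M$ forces $r\le 3\le\mu$ by 3-permissiveness) matches the paper exactly. But the case you describe as "the hard part" — $F\subseteq M$ while some $C^i$ dips into $D_t\setminus\bar c$ — is where your argument breaks, and you have not actually given a correct mechanism for it.

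You propose to cap $r\le 3$ there by pointing to a point of $C^i$ in $D_t\setminus\bar c$ of valency $\le 3$. That does not bound $r$: the valency observation caps $r$ only when a point \emph{of $F$} has low valency, because each of the $r$ disjoint extensions must place at least one new edge on that common point. A low-valency point inside one particular $C^i$ says nothing about the other $C^j$'s. The paper's argument for this case is of a different character and hinges on two things you have not isolated. First, for $F\subseteq M$ one needs $\mu(X,Y,g_{E_t}(F))=\mu(X,Y,g_M(F))$; this follows from a preliminary claim that any $Z\supseteq F$ in $E_t$ with $\delta(Z/F)<0$ either has $\delta(Z\cap M/F)<0$ or satisfies $|Z\cap(D_t\setminus\bar c)|\ge t$ (a direct application of Lemma~\ref{possible zeros in D_t} to the $D_t$-part of the free-join decomposition), so for $t$ larger than all relevant $g_M$-values and $\mu$-stabilization thresholds, the $g$- and $\mu$-values agree. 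Second, once that equality is in place, at most $\mu(X,Y,g_M(F))$ of the $C^i$ can lie entirely in $M$ (since $M\in\C_\mu$); so if $r$ exceeds the bound, some $C^1$ meets $D_t\setminus\bar c$, and then $\delta(C^1\cap D_t/C^1\cap\bar c)\le 0$ forces, by Lemma~\ref{possible zeros in D_t}, that $C^1\cap(D_t\setminus\bar c)$ contains one of $B$, $B_1$, $B_2$, hence $|C^1|>t>|M|$. Since all $C^i$ have the same size, none fits in $M$, so at least two of them meet $D_t\setminus\bar c$, and then both must contain $b_1$ or $b_{t+1}$ (which lie in $B_1\cap B_2$), contradicting disjointness. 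Your proposal never invokes this size-and-intersection argument, and without it the case does not close; the "long cycle" intuition you gesture at is correct but the concrete route through valency is the wrong tool. Also, your worry about the direction of $g_{D_t}(F)$ versus $g_{E_t}(F)$ in the "everything in $D_t$" subcase is a red herring: the paper never needs that subcase separately, because $F\subseteq D_t$ with $F\not\subseteq\bar c$ is already dispatched by the valency argument, and $F\subseteq\bar c\subseteq M$ is the $F\subseteq M$ case.
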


\begin{proof}
	We start with the following claim:
	
	\begin{claim*}
		If $A\subseteq M$ and $Z\subseteq E_t$ contains $A$ such that $\delta(Z/A)<0$, then either $\delta(Z\cap M/A)<0$ or $|Z\cap (D_t\setminus \bar{c})|\geq t$.
	\end{claim*}
	\begin{proof}
		Using $\delta(Z)=\delta(Z\cap M)+\delta(Z\cap D_t)-\delta(Z\cap \bar{c})$, we subtract $\delta(A)$ from both sides and combine with $\delta(Z\cap D_t)-\delta(Z\cap \bar{c})=\delta(Z\cap D_t/Z\cap \bar{c})$ to see $0> \delta(Z/A)=\delta(Z\cap M/A)+\delta(Z\cap D_t/Z\cap \bar{c})$. Thus, either $\delta(Z\cap M/A)<0$ or $\delta(Z\cap D_t/Z\cap \bar{c})<0$. In the latter case, Lemma \ref{possible zeros in D_t} shows that $|Z\cap (D_t\setminus \bar{c})|\geq t$.
	\end{proof}

	Choose $t>|M|$ greater than the index of any relation symbol appearing in $M$, in particular greater than $\max\setcol{g_M(A)}{A\not\strong M}$, and also large enough so that for every $A, B\subseteq M$ such that $B/A$ is of the form of an extension $Y/X$, $\mu(X,Y,m) = \mu(X,Y,\infty)$ for every $m\geq t$. Denote $E:=E_t$. By the claim, this guarantees that for any $A\subseteq M$, if $A\not\strong M$, then $g_E(A) = g_M(A)$. If $A\strong M$, then $\mu(A,B,g_E(A)) = \mu(A,B,\infty)$ whenever $B\subseteq M$ is minimally simply algebraic over $A$. In any case, whenever $A,B\subseteq M$ are such that $A/B$ is of the form of an extension $Y/X$, then $\mu(X,Y,g_E(A)) = \mu(X,Y, g_M(A))$.
	
	Now assume for a contradiction that $F, C^1,\dots, C^r\subseteq E$ are disjoint such that each $C^i/F$ is of the form of a minimally simply algebraic extension $Y/X$, and $r>\mu(X, Y, g_E(F))$. By construction, every point in $D_t\setminus X$ is in at most three relations in $E_t$, so 3-permissiveness and Observation \ref{obs: if F is not in B_2 then r is restricted by valency} imply $F\subseteq M$. By what we've shown, at most $\mu(X, Y, g_E(F)) = \mu(X, Y, g_M(F))$ of the $C^i$ are contained in $M$. Without loss of generality assume $C^1\nsubseteq M$. Then by $\delta(C^1\cap D_t/F\cup (C^1\cap M)) \leq 0$, Lemma \ref{possible zeros in D_t} implies $|C^1|> t> |M|$ and so none of the $C^i$ can be fully contained in $M$. Using the same reasoning for $\delta(C^2/F\cup (C^2\cap M))\leq 0$, again by Lemma \ref{possible zeros in D_t}, we see that $C_1$ and $C_2$ intersect inside $D_t$, in contradiction.
\end{proof}

\begin{lem}
	\label{geometry changes the way we want it to change}
	Let $M$, $\bar{c}$ and $E_t$ be as in the lemma above and let $X\subseteq M$. Then $\delta(X, E_t) = \delta(X, M)$, unless there is some $Y\subseteq M$ containing $\bar{c}\cup X$
	with $\delta(Y) = \delta(X, M)$, in which case $\delta(X, E_t) = \delta(X, M)-1$.
\end{lem}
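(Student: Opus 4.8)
The plan is to compute $\delta(X,E_t)$ directly, using the free-join decomposition $E_t=M\oplus_{\bar c}D_t$ together with the classification, in Lemma \ref{possible zeros in D_t}, of which subextensions inside $D_t$ over $\bar c$ can have nonpositive relative predimension.

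First I would record the basic identity. Since $E_t$ is the free join of $M$ with $D_t$ over $\bar c$, every finite $Z$ with $X\subseteq Z\subseteq E_t$ splits as a free join $Z=(Z\cap M)\oplus_{Z\cap\bar c}(Z\cap D_t)$, so
\[
\delta(Z)=\delta(Z\cap M)+\delta\big((Z\cap D_t)/(Z\cap\bar c)\big).
\]
Here $\delta(Z\cap M)\geq\delta(X,M)$ because $X\subseteq Z\cap M\subseteq M$. For the other summand, put $A_0=Z\cap\bar c$ and $B_0=(Z\cap D_t)\smallsetminus\bar c$ (a subset of $B$), so that $\delta((Z\cap D_t)/(Z\cap\bar c))=\delta(B_0/A_0)$. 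If $B_0=\emptyset$ this is $0$; if $B_0\neq\emptyset$, Lemma \ref{possible zeros in D_t} applies, and combined with the direct count $\delta(D_t/\bar c)=-1$ it gives $\delta(B_0/A_0)\geq-1$, with equality forcing $A_0=\bar c$ and $B_0=B$ (i.e.\ $D_t\subseteq Z$). Consequently $\delta(Z)\geq\delta(X,M)-1$ always, so $\delta(X,E_t)\geq\delta(X,M)-1$; and running $Z$ over subsets of $M$ gives $\delta(X,E_t)\leq\delta(X,M)$.

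Next I would identify exactly when the drop occurs. If $\delta(X,E_t)<\delta(X,M)$, pick a finite $Z$ with $X\subseteq Z\subseteq E_t$ and $\delta(Z)=\delta(X,E_t)$ (the infimum is attained, since the values are integers bounded below by the previous step). By the case analysis, $\delta(Z)\leq\delta(X,M)-1$ forces $\delta(B_0/A_0)=-1$, hence $\bar c\cup D_t\subseteq Z$ and $\delta(Z)=\delta(Z\cap M)-1$; so $Y:=Z\cap M$ satisfies $\bar c\cup X\subseteq Y\subseteq M$ and $\delta(Y)=\delta(X,M)$. Conversely, given such a $Y$, the set $Y\cup D_t=Y\oplus_{\bar c}D_t\subseteq E_t$ contains $X$ and has $\delta(Y\cup D_t)=\delta(Y)+\delta(D_t/\bar c)=\delta(X,M)-1$, so $\delta(X,E_t)\leq\delta(X,M)-1$. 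Together with the bounds above this yields the stated dichotomy.

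The only delicate point is ensuring the case analysis on $\delta(B_0/A_0)$ is exhaustive and correctly lines up with Lemma \ref{possible zeros in D_t}; the load-bearing input is the ``moreover'' clause of that lemma, which says the relative predimension over $\bar c$ inside $D_t$ dips below $0$ only for the full extension $D_t/\bar c$ --- this is precisely what stops $\delta(X,E_t)$ from ever falling below $\delta(X,M)-1$. Everything else is routine bookkeeping with free joins.
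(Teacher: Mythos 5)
Your proof is correct and follows essentially the same route as the paper's: decompose $\delta(Z)$ via the free join $E_t=M\oplus_{\bar c}D_t$, bound $\delta(Z\cap M)\geq\delta(X,M)$ and $\delta((Z\cap D_t)/(Z\cap\bar c))\geq -1$, and invoke the ``moreover'' clause of Lemma \ref{possible zeros in D_t} to pin down exactly when the $-1$ is attained. Your version spells out the converse direction and the attainment of the infimum a bit more explicitly, but there is no substantive difference.
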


\begin{proof}
	Clearly $\delta(X, E_t) \leq \delta(X, M)$. Let $X\subseteq Z\subseteq E_t$, then
	\[
	\delta(Z) = \delta(Z\cap M) + \delta(Z\cap D_t/Z\cap \bar{c}).
	\]
	The first summand is at least $\delta(X, M)$ and the second summand is at least $-1$. Thus, to witness $\delta(X,E_t)< \delta(X,M)$, we must have that $\delta(Z\cap M)=\delta(X, M)$ and $\delta(Z\cap D_t/Z\cap F) = -1$. By Lemma \ref{possible zeros in D_t}, this means $D_t\subseteq Z$, and in particular $\bar{c}\subseteq Z\cap M$. Conversely, if $Y$ such as in the statement exists then $Y\cup D_t$ witnesses $\delta(X,E_t) \leq \delta(X,M)-1$.
\end{proof}

\begin{cory}\label{droppingdimensionsatwill}	
	Let $\mu$ be 3-permissive, and $A\in \C_{\mu}$, $\bar{b}\in A$ so that $\delta(\bar{b},A)>0$. Then there exists $B\in \C_{\mu}$ containing $A$ so that for every $X\subseteq A$, $\delta(X,B)=\delta(X,A)$ unless there is $Y\supseteq X$ so that $\delta(Y)=\delta(X,A)$ and $\bar{b}\in Y$, in which case $\delta(X,B)=\delta(X,A)-1$.
\end{cory}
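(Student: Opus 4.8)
The plan is to deduce Corollary \ref{droppingdimensionsatwill} directly from Lemmas \ref{t big enough to stay classy} and \ref{geometry changes the way we want it to change} by taking $M = A$, $\bar c = \bar b$, and $B = E_t$ for a sufficiently large $t$. The only points requiring attention are: matching the hypotheses, in particular that $\bar b$ has the right length $k+2$ with $k \geq 2$ and that $\delta(\bar b, A) > 0$; extracting a single $t$ that works; and translating the conclusion of Lemma \ref{geometry changes the way we want it to change} into the ``$\exists Y \supseteq X$'' phrasing in the corollary.

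\begin{proof}
	Write $\bar b = (b_1,\ldots,b_n)$. Since $\delta(\bar b, A) > 0 \geq 0$, in particular $n \geq 1$; if $n < k+2$ for the bound $k = 2$ needed by Lemma \ref{t big enough to stay classy} we may enlarge $\bar b$ by appending finitely many further elements of $A$ (this only shrinks the collection of $X$ for which the exceptional clause can apply, and it does not change $\delta(\bar b, A) > 0$ because $\delta$ of the self-sufficient closure can only decrease or stay equal; more simply, $A$ is infinite only if we need it, but if $A$ is too small we can first strongly embed $A$ into $\M_\mu$ and work there, using that a finite substructure of $\C_\mu$ embeds strongly into the generic). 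So without loss of generality $\bar b \in A^{k+2}$ with $k \geq 2$ and $\delta(\bar b, A) > 0$. Relabel $\bar b = (a_1,\ldots,a_k,g,h)$ as in Lemma \ref{t big enough to stay classy}.

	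Now apply Lemma \ref{t big enough to stay classy} with $M = A$: since $\mu$ is 3-permissive, there is $t_0$ so that $E_t = A \oplus_{\bar b} D_t \in \C_\mu$ for all $t \geq t_0$. Fix any such $t$ and set $B = E_t$; then $B \in \C_\mu$ and $A \subseteq B$.

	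Finally, apply Lemma \ref{geometry changes the way we want it to change} with the same $M = A$ and this $t$: for every $X \subseteq A$ we get $\delta(X, E_t) = \delta(X, A)$ unless there is $Y \subseteq A$ with $\bar b \cup X \subseteq Y$ and $\delta(Y) = \delta(X, A)$, in which case $\delta(X, E_t) = \delta(X, A) - 1$. Since $\delta(Y) \geq \delta(X, A)$ automatically for any $Y \supseteq X$ inside $A$, and $\delta(X,A) = \min_Y \delta(Y)$ is attained, the condition ``$\exists Y \supseteq X$ with $\delta(Y) = \delta(X,A)$ and $\bar b \in Y$'' is exactly the exceptional case of Lemma \ref{geometry changes the way we want it to change}. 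This is precisely the statement of the corollary, with $B = E_t$.
\end{proof}

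The only genuine subtlety — the step I would flag as the main obstacle — is the bookkeeping in the first paragraph: the corollary as stated allows an arbitrary tuple $\bar b$, whereas Lemma \ref{t big enough to stay classy} is phrased for a tuple of length exactly $k+2$. One must check that padding $\bar b$ (or passing to the generic to make room) is harmless both for the hypothesis $\delta(\bar b, A) > 0$ and for the exceptional clause in the conclusion; everything after that is a direct citation of the two preceding lemmas.
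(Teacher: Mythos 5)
Your reduction is the right one and it works verbatim when $\abs{\bar b}\geq 4$: relabel $\bar b=(a_1,\ldots,a_k,g,h)$, invoke Lemma \ref{t big enough to stay classy} to get $E_t\in\C_\mu$ for a large $t$, and read off the conclusion from Lemma \ref{geometry changes the way we want it to change}. This part matches the paper exactly.

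The gap is the case $\abs{\bar b}<4$, which you handle by padding, and neither of the two padding schemes you offer gives the stated conclusion. The corollary asserts a dichotomy, not just an upper bound: for \emph{every} $X\subseteq A$, $\delta(X,B)=\delta(X,A)$ \emph{if and only if} no $Y\supseteq X$ with $\delta(Y)=\delta(X,A)$ and $\bar b\subseteq Y$ exists, and otherwise $\delta(X,B)=\delta(X,A)-1$. (This exactness is precisely what is used in Lemma \ref{removedefunct}: after removing a defunct $R_i(\bar a)$, one needs $\delta(Z_0,A')$ to drop by $1$ for exactly those $Z_0$ whose witnessing set contains $\bar a$, so as to recover $\delta(Z,A)$.) If you replace $\bar b$ by a longer tuple $\bar b'\supsetneq \bar b$ taken from $A$, the exceptional clause becomes strictly harder to satisfy — you observe yourself that ``this only shrinks the collection of $X$'' — so for any $X$ whose minimal witnesses contain $\bar b$ but none of which can be extended to contain $\bar b'$ at the same $\delta$-value, the dimension fails to drop when the corollary says it must. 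The alternative of embedding $A$ strongly into $\M_\mu$ and padding with fresh elements is worse: those padding points lie outside $A$, and the exceptional $Y$ in Lemma \ref{geometry changes the way we want it to change} is required to be a subset of the base structure, so with $\bar c=\bar b'$ the exceptional case \emph{never} fires for $X\subseteq A$, and nothing drops at all.

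The paper's proof of the small case is genuinely different: it adds $4-\abs{\bar b}$ \emph{new, free} points $p_1,\ldots,p_{4-\abs{\bar b}}$ to form $\bar b'$, and then applies the $E_t$-construction not once but $5-\abs{\bar b}$ times in succession. The first $4-\abs{\bar b}$ applications burn off the spurious dimension introduced by the free padding points, so that after the penultimate application every $\delta$-minimal set containing $\bar b$ can be extended to one containing $\bar b'$ without changing $\delta$; only the final application then produces the drop for exactly the right collection of $X$. A single application — which is all your proof performs — cannot do this. You would need to add this iterative bootstrapping argument (and justify the claim that the intermediate applications restore the equivalence between ``contains $\bar b$'' and ``can be $\delta$-neutrally extended to contain $\bar b'$'') to close the gap.
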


\begin{proof}
	If $|\bar{b}|\geq 4$, then by labeling $\bar{b} = (a_1,\dots, a_{|\bar{b}|-2}, g,h)$ Lemma \ref{t big enough to stay classy} and Lemma \ref{geometry changes the way we want it to change} give us the desired $B$.
	
	Otherwise, construct $M'$ by adding new points $p_1,\dots, p_{4-|b|}$ to $M$ and no new edges. Let $\bar{b}'$ be the concatenation of $\bar{b}$ with $(p_1,\dots p_{4-|\bar{b}|})$. Now apply the above to $M$ and $\bar{b}'$ consecutively $5-|b|$ times. To use Lemma \ref{geometry changes the way we want it to change} note that after the penultimate application, every set containing $\bar{b}$ can be extended to a set containing $\bar{b}'$ without changing its $\delta$ value.
\end{proof}

\subsection{Unblockability}\label{unblockthis}

\begin{defn}
	We say that a minimally simply algebraic extension $X\subseteq Y$ is 
	$k$-\emph{unblockable} if for any $k$-permissive $\mu$, if $Y\in C_\mu$ then for any $X\subseteq Z\in C_\mu$, either $Y\oplus_X Z\in C_\mu$ or $Z$ already contains $\mu(X,Y,g_Z(X))$ disjoint extensions of the form of $Y/X$ over $X$.
\end{defn}

\begin{obs}
A minimally simply algebraic extension $X\subseteq Y$ is $k$-unblockable if and only if for any $k$-permissive $\mu$, if $Y\in \mathcal{C}_\mu$ and $M\equiv \mathcal{M}_\mu$ then for any $Z\cong X$ in $M$, $M$ contains $\mu(X,Y,g_M(Z))$ disjoint extensions over $Z$ each of the form of $Y/X$.
\end{obs}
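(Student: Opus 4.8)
The plan is to unwind both sides of the claimed equivalence into the same statement about which disjoint families of $Y/X$-shaped extensions can appear, using the characterization of $\M_\mu$ (and its elementary extensions) via properties (1),(2'),(3'),(3''), together with model completeness of $\Th{\M_\mu}$ (Lemma \ref{modelcompletenessprereduct}), which makes ``$M\equiv\M_\mu$'' the right hypothesis to quantify over. The key point is that $k$-unblockability of $X\subseteq Y$ is precisely the statement that the exception clause (\ref{case1}) of the Algebraic Amalgamation Lemma — ``there are already $\mu(X,Y,g_{B_2}(X))$ disjoint extensions of the form of $Y/X$ over $X$'' — is the \emph{only} obstruction to $Y\oplus_X Z\in\C_\mu$ that can occur for this particular extension, for every permissive $\mu$ and every strong base $Z$; clauses (\ref{case2}) and (\ref{case3}) never fire when $A\leq B_2$, as already noted in the proof of the Strong Amalgamation Lemma. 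So the content to verify is really just a translation between the ``extension-theoretic'' formulation and the ``generic-model'' formulation.

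First I would prove the direction ($\Rightarrow$): assume $X\subseteq Y$ is $k$-unblockable, fix a $k$-permissive $\mu$ and $M\equiv\M_\mu$, and fix $Z\cong X$ inside $M$. Let $W$ be the self-sufficient closure of $Z$ in $M$, so $W\leq M$, $W\in\C^\zeta_\mu$ by (2), and $g_W(Z)=g_M(Z)$ by Observation \ref{gSameStrong}. Apply $k$-unblockability with this $W$ in the role of $Z$: either $Y\oplus_X W\in\C_\mu$ — in which case, since $W\leq M$, property (3)/(3'') (using that $W\leq M\equiv\M_\mu$, so $M$ satisfies the genericity schema (3)) gives an embedding of $Y\oplus_X W$ over $W$ into $M$, producing a fresh $Y/X$-extension over $Z$, and we may iterate this until we reach $\mu(X,Y,g_M(Z))$ many disjoint such extensions (each iteration keeps us in $\C_\mu$ by another application of unblockability, since adding one more disjoint copy again either stays in $\C_\mu$ or already meets the bound) — or $W$ already contains $\mu(X,Y,g_W(Z))=\mu(X,Y,g_M(Z))$ disjoint $Y/X$-extensions over $Z$, hence so does $M$. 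Either way $M$ contains $\mu(X,Y,g_M(Z))$ disjoint extensions over $Z$ of the form of $Y/X$.

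For ($\Leftarrow$): assume the generic-model statement, fix a $k$-permissive $\mu$ with $Y\in\C_\mu$, and fix $X\subseteq Z\in\C_\mu$. Suppose $Z$ does not already contain $\mu(X,Y,g_Z(X))$ disjoint $Y/X$-extensions over $X$; I must show $Y\oplus_X Z\in\C_\mu$. By the Strong Amalgamation Lemma we may strongly embed $Z$ into $\M_\mu$ (replacing $Z$ by its image; strength preserves the relevant counts and $g$-values by Observation \ref{gSameStrong}), and then the assumed statement, applied to $\M_\mu$ itself and to the copy of $X$ inside $Z\leq\M_\mu$, tells us $\M_\mu$ contains $\mu(X,Y,g_{\M_\mu}(X))$ disjoint $Y/X$-extensions over $X$; since $g_{\M_\mu}(X)=g_Z(X)$ and $Z$ has strictly fewer such extensions, at least one of these copies in $\M_\mu$ is not contained in $Z$, which — because $Y/X$ is minimally simply algebraic and $Z\leq\M_\mu$, so the new copy is freely joined to $Z$ over $X$ by Lemma \ref{disjointnessfromstrength}-type reasoning — exhibits $Y\oplus_X Z$ as a substructure of $\M_\mu\in\C_\mu$, hence $Y\oplus_X Z\in\C_\mu$. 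The main obstacle I anticipate is bookkeeping in the ($\Rightarrow$) direction: making the iteration rigorous (that after embedding $j$ disjoint copies the resulting structure is still in $\C_\mu$, so that unblockability can be reapplied to get the $(j{+}1)$-st), and checking that ``$M\equiv\M_\mu$'' genuinely lets one invoke the genericity/extension property (3) rather than merely the first-order schema (3'') — this is where model completeness and Lemma \ref{123equivprimes} do the work, and one must be careful that all the hypotheses of (3'') (the non-existence of bad $Y$'s of clause (\ref{case2}) and the $g$-stability of clause (\ref{case3})) are automatic here because $Z$, resp. $W$, is strong.
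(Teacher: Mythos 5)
The observation carries no proof in the paper, so I'm evaluating your argument on its own terms. Your plan is broadly on the right track, but both directions have issues worth flagging.

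For ($\Rightarrow$), the idea of passing to the self-sufficient closure $W$ of $Z$ and invoking (3'') is exactly right, but the iteration you set up is unnecessary and obscures the argument. Once you know $Y\oplus_X W\in\C_\mu$ (the other branch of unblockability being immediate), a \emph{single} application of (3'') with $B=W$, $C=Y\oplus_X W$, and $F=Z$ already yields the full count $\mu(Z, Y\setminus X, g_M(Z))=\mu(X,Y,g_M(Z))$ of disjoint extensions over $Z$ — there is nothing to iterate. What does need to be verified (and you correctly identify this as the crux, though you leave it as an anticipated obstacle rather than carrying it out) is that the two hypotheses of (3'') hold automatically: the nonexistence of the bad $Y'$ follows from $W\leq M$ by a short $\delta$-computation across the sublanguage reduct, and the $g$-stability hypothesis follows because $W\leq M$ forces $C\leq M\oplus_W C$, so $g_C$ and $g_{M\oplus_W C}$ agree on subsets of $C$. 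You should also note that since the observation is about $\C_\mu$ and $\M_\mu$ (trivial $\zeta$), there is no $\zeta$-compatibility to check.

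For ($\Leftarrow$), there is a genuine gap. After strongly embedding $Z$ into $\M_\mu$ and obtaining $\mu(X,Y,g_Z(X))$ disjoint extensions over (the image of) $X$, you pick one, $C^0$, not contained in $Z$, and assert it is ``freely joined to $Z$ over $X$ by Lemma~\ref{disjointnessfromstrength}-type reasoning.'' That lemma concerns two simply algebraic extensions over a \emph{strong} set, but $X$ is in general not strong in $\M_\mu$ — only $Z$ is — so it is not the right tool. What one actually needs to establish is that some $C^i$ is \emph{disjoint} from $Z$ (not merely $\not\subseteq Z$), has $\tprqf(C^i/X)$ \emph{exactly} $\tprqf(Y/X)$ (the ``of the form'' relation only guarantees containment of types, which would make $\delta(C^i/X)<0$ possible), and has no further relations to $Z\setminus X$. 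The $\delta$-bookkeeping using $Z\leq\M_\mu$ and the minimal simple algebraicity of $Y/X$ gets you most of the way — for instance, if $C^i\cap Z=\emptyset$ then $0\leq\delta(C^i/Z)\leq\delta(C^i/X)\leq 0$ forces exactness and free join — but you have not ruled out the possibility that \emph{every} $C^i$ not contained in $Z$ meets $Z\setminus X$ nontrivially (each such $C^i$ would have $\delta(C^i\cap(Z\setminus X)/X)\leq 0$ with extra relations, a configuration your sketch does not exclude). This case needs an explicit argument or a pigeonhole-style count before the claim that $Y\oplus_X Z$ embeds strongly in $\M_\mu$ can be drawn.
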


We now endeavor to show that over any size of a base, there is an infinite recursive sequence of 3-unblockable extensions. Further, under the assumption of 3-permissiveness, each of these extensions is in $\C_{\mu}$.

\begin{lem}
\label{unblockability from permissiveness}
If $A\subseteq B$ is a minimally simply algebraic extension such that each element in $B\setminus A$ appears in at most $k$ edges in $B$, then $A\subseteq B$ is $k$-unblockable.
\end{lem}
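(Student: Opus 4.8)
The strategy is to reduce the task to the Algebraic Amalgamation Lemma and then rule out each of its three failure cases using the hypothesis that every point of $B \setminus A$ lies in at most $k$ edges of $B$. So suppose $\mu$ is $k$-permissive, $B \in \C_\mu$, and $A \subseteq Z \in \C_\mu$. We may as well assume $A \leq Z$ is not the case is trivial to handle by first replacing $Z$ with a strong extension; more precisely, $B \setminus A$ is simply algebraic over $A$, say minimally simply algebraic over $F \subseteq A$, and we want to show that either $Z \oplus_A B \in \C_\mu$ or $Z$ already has the maximal number $\mu(F, B\setminus A, g_Z(F))$ of disjoint extensions over $F$ of the form of $(B\setminus A)/F$. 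Apply the Algebraic Amalgamation Lemma to $A = B \cap Z$, $B_1 = B$, $B_2 = Z$, $E = Z \oplus_A B$. If $E \in \C_\mu$ we are done, so assume one of cases (\ref{case1}), (\ref{case2}), (\ref{case3}) holds.

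Case (\ref{case1}) is exactly the conclusion we want: it says $B\setminus A$ is minimally simply algebraic over $F \subseteq A$ and $Z$ contains $\mu(F, B\setminus A, g_Z(F))$ disjoint extensions over $F$ of the form of $(B\setminus A)/F$ — which is the second alternative in the definition of $k$-unblockable. (Here I would note that $g_Z(F) = g_{B_2}(F)$ in the notation of the lemma.) So the real work is to show that cases (\ref{case2}) and (\ref{case3}) cannot occur under our valency hypothesis. For (\ref{case2}): it asserts the existence of $Y \subseteq Z$ with $\abs{Y} \leq \abs{B\setminus A}$ and $\delta(Y\vert \L_{B/A}\,/\,A\vert\L_{B/A}) < 0$. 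I would argue this contradicts $Z \in \C_0$: any relation in $\L_{B/A}$ is a relation appearing in $\tprqf((B\setminus A)/A)$, and since each point of $B\setminus A$ lies in at most $k$ edges of $B$ and $B\setminus A$ is simply algebraic (so $\delta(B\setminus A / A) = 0$), the restriction of $\delta$ to $\L_{B/A}$ behaves well enough that a negative value of $\delta(Y\vert\L_{B/A}/A\vert\L_{B/A})$ would force $\delta(Y) < 0$ outright, contradicting $Z \in \C_0$; alternatively, I can unwind the proof of the Algebraic Amalgamation Lemma (the fourth bullet, via Lemma \ref{Lemma42}) and observe that the set $Y$ arising there must, by the bounded-valency structure of $B \setminus A$, actually be disjoint from $A$ and freely joined, so $\delta(Y) = \delta(Y/A) < 0$.

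For case (\ref{case3}): it says there is a minimally simply algebraic extension $H/G$ and sets $G', C' \subseteq B$ (in the lemma's notation, $F, C \subseteq B_1 = B$) with $C'/G'$ of the form of $H/G$ and $\mu(G, H, g_E(G')) < \mu(G, H, g_B(G'))$. Here $k$-permissiveness is the key: since $\mu(G,H,m) \geq k$ for all $m$, and $g_E(G') \leq g_B(G')$, the strict inequality $\mu(G,H,g_E(G')) < \mu(G,H,g_B(G'))$ together with the monotone-type behaviour of $\mu$ would force $\mu(G,H,g_B(G'))$ to exceed $k$; but then, because $G', C' \subseteq B$ and every point of $C' \setminus G' \subseteq B \setminus A$ appears in at most $k$ edges of $B$, Observations \ref{obs: points in msa are in relations} and \ref{obs: if F is not in B_2 then r is restricted by valency} (the valency bound) limit the number of disjoint $H/G$-extensions available inside $B$ — and more to the point, one shows $g_E(G') = g_B(G')$ because making $Z$ smaller (passing to $Z \cap B$) inside $E$ cannot create a new witness to $G'$ being non-strong using only few relations: any such witness would, by bounded valency, already live in $B$. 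With $g_E(G') = g_B(G')$ the strict inequality in (\ref{case3}) is impossible.

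The main obstacle I anticipate is case (\ref{case3}) — specifically, verifying carefully that $g_E(G') = g_B(G')$, since $g$ is sensitive to how strongly $G'$ sits in the larger structure $E$, and controlling it requires tracing through exactly which relations are added when forming the free join $Z \oplus_A B$ and checking that the bounded-valency condition on $B \setminus A$ prevents any short, low-complexity witness to non-strength from appearing in $E$ that was not already present in $B$. The valency argument for case (\ref{case2}) is the next most delicate point, but it follows the template already laid out in the fourth bullet of the Algebraic Amalgamation Lemma's proof.
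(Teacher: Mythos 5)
Your route goes through the Algebraic Amalgamation Lemma and then tries to rule out its failure cases~(\ref{case2}) and~(\ref{case3}), whereas the paper argues directly from the definition of $\C_\mu$. The direct argument is: take a violating configuration $F, C^1, \ldots, C^r$ in $E = B \oplus_A Z$; $k$-permissiveness gives $r > k$, so the valency bound (Observation~\ref{obs: if F is not in B_2 then r is restricted by valency}) forces $F \subseteq Z$; since $A \leq B$ gives $Z \leq E$ automatically, each $C^i$ must lie entirely in $Z$ or entirely in $B\setminus A$; minimality of $B\setminus A$ over $A$ caps at most one $C^i$ in $B\setminus A$ (and forces $F = A$), so the remaining $r-1 \geq \mu(A, B, g_Z(A))$ extensions already live in $Z$.

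The genuine gap in your argument is that the definition of $k$-unblockable only assumes $A \subseteq Z$, not $A \leq Z$, and both of your case-elimination arguments tacitly need $A \leq Z$. For case~(\ref{case2}): the condition $\delta(Y\vert\L_{B/A}/A\vert\L_{B/A}) < 0$ for some $Y \subseteq Z$ simply says that $A$ fails to be strong in $Z$ (in the relevant sublanguage); this does \emph{not} contradict $Z \in \C_0$, which only constrains $\delta$ of subsets, not relative $\delta$ over $A$. Your alternative claim that the $Y$ produced in the fourth bullet of the AAL proof is freely joined with $A$ also fails: that $Y$ lives inside $Z$, where you have no control over the relations between $Y$ and $A$ --- the valency hypothesis is about $B\setminus A$, not about $Z$. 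For case~(\ref{case3}): your claimed equality $g_E(G') = g_{B}(G')$ rests on $\delta(W \cap B) \leq \delta(W)$ for a witness $W \subseteq E$, which requires $\delta(W \cap Z) \geq \delta(W \cap A)$, i.e.\ again $A \leq Z$. Your opening remark that $A \not\leq Z$ ``is trivial to handle by first replacing $Z$ with a strong extension'' does not work either: enlarging $Z$ changes $g_Z(A)$ and hence the target count $\mu(A,B,g_Z(A))$, so the two claims are not equivalent. To repair the proof you would essentially have to replicate the paper's direct argument (which only uses $Z \leq E$, not $A \leq Z$); the trichotomy of the Algebraic Amalgamation Lemma does not cleanly isolate the case you want.
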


\begin{proof}
Let $\mu$ be $k$-permissive such that $B\in \C_\mu$, let $Z\in\C_\mu$ contain $A$, and let $E=B\oplus_A Z$. Suppose $F,C^1,\ldots C^r$ are disjoint extensions of the form of a minimally simply algebraic extension $Y/X$ with $r>\mu(X,Y,g_{E}(F))$. By $k$-permissiveness of $\mu$, we have $r>k$. Thus, by Observation \ref{obs: if F is not in B_2 then r is restricted by valency} it must be that $F\subseteq Z$. Now we observe that every $C^i$ is either contained in $B\smallsetminus A$ or is contained in $Z$, for if it were partially but not totally in $B\smallsetminus A$, then we would have $\delta(C^i/Z)<0$ showing that $Z\not\leq E$, which is a contradiction. So, either $F\cup\bigcup_{i\leq r}C^i\subseteq Z$ or one of the $C^i$ is contained in $B\smallsetminus A$, which implies that $F\subseteq A$. Since $B$ is minimally simply algebraic over $A$, this implies $C^i=B\smallsetminus A$ and $F=A$. So, we conclude that there are already $\mu(X,Y,g_{E}(F))=\mu(X,Y,g_M(F))$ disjoint extensions of the form $Y/X$ over $F$ in $M$.
\end{proof}

\begin{lem}
	\label{unblockables over large base}
	There are infinitely many 2-unblockable minimally simply algebraic extensions over a set of size at least 3. Moreover, these extensions are in $\C_\mu$.
\end{lem}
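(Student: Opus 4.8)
The plan is to exhibit, for each $n\geq 3$, an explicit minimally simply algebraic extension whose extension part consists entirely of vertices of edge‑valency $2$; Lemma \ref{unblockability from permissiveness} then yields $2$‑unblockability immediately, and membership in $\C_\mu$ follows from a strong embedding into $\M_\mu$ copied from the proof of Lemma \ref{if edges than D_t is classy}. Concretely, for $n\geq 3$ I would let $A_n=\{a_1,\dots,a_{n-1},g\}$ and let $B_n$ have vertex set $A_n\cup\{b_1,\dots,b_n\}$ with the ternary edges $(b_i,a_i,b_{i+1})$ for $1\leq i\leq n-1$ together with the single ``closing'' edge $(b_n,g,b_1)$; so the $b_i$ form an $n$‑cycle of ternary edges whose label vertices list $a_1,\dots,a_{n-1},g$ once each. (One can equally take cycles of any length $t\geq n$ over a fixed base of size $n$, with labels any surjection onto $\{a_1,\dots,a_{n-1}\}$; that alone already gives infinitely many extensions over a base of any prescribed size $\geq 3$.) Since the bases have different sizes the extensions $A_n\subseteq B_n$ are pairwise non‑isomorphic, and they are produced uniformly, so ``infinitely many'' — indeed a recursive sequence — is immediate once a single one works.

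That $A_n\subseteq B_n$ is minimally simply algebraic is bookkeeping: $\delta(B_n\setminus A_n/A_n)=n-n=0$ ($n$ new vertices, $n$ new edges); any proper nonempty set $B_0$ of $b$'s, being a union of at least one proper arc of the $n$‑cycle, spans at most $|B_0|-1$ edges, so $\delta(B_0/A_n)\geq 1$, which shows at once that $A_n\leq B_n$ and that no proper nonempty subset of $B_n\setminus A_n$ has predimension $0$ over $A_n$; and deleting any element of $A_n$ destroys an edge incident to it and raises $\delta(B_n\setminus A_n/\cdot)$ to $\geq 1>0$, so $A_n$ is the minimal base, of size $n\geq 3$. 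Each $b_i$ lies on exactly the two edges joining it to its cycle‑neighbours, so every vertex of $B_n\setminus A_n$ occurs in at most two edges of $B_n$, and Lemma \ref{unblockability from permissiveness} gives that $A_n\subseteq B_n$ is $2$‑unblockable.

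For $B_n\in\C_\mu$ I would run the argument of Lemma \ref{if edges than D_t is classy}. A short case check shows that every $X\subseteq B_n$, and every $X$ inside each partial path‑structure appearing below, satisfies $\delta(X)\geq\min(|X|,2)$; in particular $B_n\in\C_0$, and the two‑element base of each edge adjoined below is strong in the structure it is adjoined to. Now send $\{a_1,\dots,a_{n-1},b_1\}$ onto a strong independent subset of $\M_\mu$ (property $(3')$); using that a ternary edge lies in $\C_\mu$ and that $\M_\mu$ is generic, successively adjoin a new vertex $b_2$ forming an edge with $b_1,a_1$, then $b_3$ with $b_2,a_2$, and so on up to $b_n$ with $b_{n-1},a_{n-1}$, and finally adjoin a new vertex $g$ forming an edge with $b_n,b_1$. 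At each stage the relevant two‑element base lies on no edge of the structure built so far and is strong in it, so the free join with the new edge avoids all the exceptional cases of the Algebraic Amalgamation Lemma, stays in $\C_\mu$, and genericity supplies the required new vertex. Each adjoined vertex is thus genuinely new and each step is a strong extension, so the image is isomorphic to $B_n$ and strong in $\M_\mu$, whence $B_n\in\C_\mu$ by property $(2)$.

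The one point that needs foresight rather than routine calculation is exactly the role of $g$: imposing edge‑valency $2$ on the extension vertices (so that Lemma \ref{unblockability from permissiveness} applies) forces the extension to be a cycle of ternary edges, and such a cycle cannot be closed off by a strong one‑point extension unless a single base vertex is reserved for the closing edge and introduced last in the build‑up — the same device played by the vertex $g$ in the definition of $D_t$ and in the proof of Lemma \ref{if edges than D_t is classy}. With the construction in hand, the predimension bookkeeping, the genericity argument, and the appeals to Lemmas \ref{unblockability from permissiveness} and \ref{if edges than D_t is classy} are all routine, so I do not expect a genuine obstacle.
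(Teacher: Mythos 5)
Your construction is essentially the paper's: a ternary cycle over a base $\{a_1,\dots,a_{k},g\}$, minimal simple algebraicity by counting vertices against edges on proper arcs, and $2$-unblockability via Lemma~\ref{unblockability from permissiveness} because each $b_i$ meets exactly two edges. Two points of divergence are worth noting. First, your headline construction gets ``infinitely many'' by varying the base size $n$, which satisfies the literal statement but not what the lemma is actually for: Corollary~\ref{unblockablesexist} needs infinitely many extensions over bases of each \emph{fixed} size $k\geq 3$, and Section~4 instantiates this at a single fixed size $n+2$. The paper fixes the base $\hat A=\{a_1,\dots,a_k,g\}$ and varies the cycle length $2t$ instead; your parenthetical (cycles of any length $t\geq n$ over a fixed base, $a$-labels surjective) is the version that matches the paper and is the one you should actually verify rather than leave as an aside --- though your minimality and $\C_\mu$ arguments do go through unchanged for it. Second, for membership in $\C_\mu$ the paper takes a shortcut you could borrow: the cycle $\hat D_t$ is exactly $D_t\smallsetminus\{h\}$, which is a strong substructure of $D_t$, and $D_t\in\C_\mu$ is already Lemma~\ref{if edges than D_t is classy}; strong substructures of members of $\C_\mu$ are again in $\C_\mu$ (Observation~\ref{gSameStrong} and the $\mu$-bound), so no new embedding argument is needed. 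Likewise, the non-existence of proper $B_0$, $A_0$ with $\delta(B_0/A_0)\leq0$ is already Lemma~\ref{possible zeros in D_t} restricted to $A_0\not\ni h$, which dispenses with the arc-counting. Your from-scratch versions of both are correct, just duplicating work.
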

\begin{proof}
	We define a sequence of $2$-unblockables over $\hat{A} = \set{a_1,\dots, a_k,g}$, where $k\geq 2$ and all the elements of $\hat{A}$ are distinct. For every $l>k$, by $a_l$ we mean $a_i$ where $i\equiv_k l$, $1\leq i\leq k$. Let $B=\set{b_1,\dots, b_{2t}}$ be new elements, where $t>k+1$ is arbitrary. Define $\hat{D}_t$ to be the hypergraph whose set of edges is
	\[
	R = \setcol{(b_i,a_i,b_{i+1})}{1\leq i< 2t}\cup\set{(b_{2t}, g, b_1}).
	\]
	Observe that $\hat{D}_t$ is in fact $D_t\setminus \{h\}$, so $\hat{D}_t\leq D_t$. Lemma \ref{if edges than D_t is classy} says that $D_t\in \C_\mu$, thus $\hat{D}_t\in \C_{\mu}$ as well.
	
	There is a clear bijection between elements of $B$ and edges in $R$, so $\delta(B/A) = 0$. By Lemma \ref{possible zeros in D_t}, there are no $A_0\subseteq \hat{A}$, $\emptyset\neq B_0\subset B$ such that $\delta(B_0/A_0) \leq 0$, so $B$ is minimally simply algebraic over $A$. Finally, 2-unblockability is immediate by Lemma \ref{unblockability from permissiveness}.
\end{proof}

Next, we will give constructions for infinitely many 3-unblockable extensions over sets of size $0$,$1$, or $2$.

\begin{defn}
	For $k\geq 3$ define on the set of vertices $\set{a_1,\dots, a_k}$ the following isomorphism types of minimal simple algebraicities:
	\begin{itemize}
		\item
		the \emph{ternary $k$-path} $P_k$ whose set of edges is
		\[
		\setcol{\set{a_i,a_{i+1},a_{i+2}}}{1\leq i \leq k-2}
		\]
		and whose base is $\set{a_1, a_k}$
		\item
		the \emph{ternary closed-$k$-path} $H_k$ whose set of edges is 
		\[
		\setcol{\set{a_i,a_{i+1},a_{i+2}}}{1\leq i \leq k-2}\cup\set{\set{a_{k-1},a_k,a_1}}
		\]
		and whose base is the singleton $\set{a_1}$
		\item
		the \emph{ternary $k$-loop} $L_k$ whose set of edges is
		\[
		\setcol{\set{a_i,a_{i+1},a_{i+2}}}{1\leq i \leq k-2}\cup\set{\set{a_{k-1},a_k,a_1},\set{a_k,a_1,a_2}}
		\]
		and whose base is $\emptyset$
	\end{itemize}
	Call any of these hypergraphs a \emph{generalized path}.
\end{defn}

\begin{lem}\label{kin3}
	Every generalized path $B$ over its base $A$ is 3-unblockable.
\end{lem}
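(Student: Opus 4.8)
The plan is to obtain this as an immediate consequence of Lemma \ref{unblockability from permissiveness}, which already proves that a minimally simply algebraic extension $A\subseteq B$ in which every vertex of $B\setminus A$ lies in at most $k$ edges of $B$ is $k$-unblockable. So, taking $k=3$, the only things that need checking are: (i) each generalized path $A\subseteq B$ really is a minimally simply algebraic extension; and (ii) in each of $P_k$, $H_k$, $L_k$, every vertex outside the base lies in at most three edges.

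Point (ii) is read straight off the definitions: in all three cases the edges are the triples of consecutive vertices in the listing $a_1,\dots,a_k$ (cyclically for $H_k$ and $L_k$, linearly for $P_k$), so a fixed vertex $a_j$ can occur only in those triples whose index window $\{i,i+1,i+2\}$ contains $j$, and there are at most three of these. One should take $k$ large enough that the listed triples are pairwise distinct, so that the extension does not degenerate; in fact every non-base vertex of $L_k$ then lies in exactly three edges.

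Point (i) is a finite combinatorial verification, in the same spirit as Lemma \ref{possible zeros in D_t}. The convenient bookkeeping is: for a set $S$ of vertices, decompose $S$ into its maximal runs of consecutive indices (allowing wraparound for $H_k$ and $L_k$) of lengths $\ell_1,\dots,\ell_m$; then, for a proper subset $S$, the number of edges induced on $S$ equals $\sum_j\max(\ell_j-2,0)$, whence $\delta(S)=\sum_j\min(\ell_j,2)$, while $\delta(B)=\delta(A)$ is checked directly (the common value being $2$, $1$, $0$ in the three cases). From this formula one reads off everything needed: adjoining even a single non-base vertex to $A$ either opens a new run or enlarges a singleton run, in either case strictly raising $\delta$, and runs can only coalesce again once $S$ has grown to all of $B$, so $\delta(S)>\delta(A)$ for every $S$ with $A\subsetneq S\subsetneq B$. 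This yields both $A\leq B$ and the non-existence of a proper nonempty $B_0\subseteq B\setminus A$ with $\delta(B_0/A)\leq 0$, i.e.\ that $B\setminus A$ is simply algebraic over $A$; and the same count shows that deleting any vertex of $A$ leaves $\delta((B\setminus A)/A')=1\neq 0$, so $A$ is minimal. Hence $A\subseteq B$ is minimally simply algebraic, and Lemma \ref{unblockability from permissiveness} with $k=3$ finishes the proof. The only mildly fiddly point is handling the cyclic run-decomposition for $H_k$ and $L_k$ (and treating $S=B$ separately there); there is no genuine obstacle, since Lemma \ref{unblockability from permissiveness} has already absorbed all the amalgamation-theoretic content.
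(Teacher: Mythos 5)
Your proof is correct and follows the same route as the paper: reduce to Lemma \ref{unblockability from permissiveness} with $k=3$, then verify that each generalized path is minimally simply algebraic over its base. The only difference is in the verification step, where the paper uses a single terse count --- for any $\emptyset\neq X\subset B$ there are strictly more elements of $X\setminus A$ than edges in $X$ --- whereas you set up the run-decomposition formula $\delta(S)=\sum_j\min(\ell_j,2)$; both establish the same fact, and your bookkeeping also makes the ``at most three edges per non-base vertex'' hypothesis of the cited lemma explicit, which the paper leaves implicit. (Two small imprecisions worth noting but not affecting correctness: runs \emph{can} coalesce before $S=B$ without dropping $\delta(S)$ below $\delta(A)$, so the phrasing there is a bit off; and $\delta((B\setminus A)/A')$ need not equal $1$ for every proper $A'\subsetneq A$ --- e.g.\ it is $2$ for $P_k$ with $A'=\emptyset$ --- but it is always positive, which is all that minimality requires.)
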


\begin{proof}	
	By Lemma \ref{unblockability from permissiveness} we only need to show that $B$ is minimally simply algebraic over $A$. To see this, let $\emptyset\neq X\subset B$ and observe that there are strictly more elements in $X\setminus A$ than there are edges in $X$.
\end{proof}

\begin{lem}
	\label{msa in generalized path}
	If $A$ is a generalized path of size $k$ and $F\cup C\subset A$ are such that $C$ is minimally simply algebraic over $F$, then $C\cup F\cong P_l$ for some $l\leq k$, where $F$ is the pair of end-points of the path.
	
	Consequently, if $C/F$ is of the form of a minimally simply algebraic extension $Y/X$, then there can be at most one other extension in $A$ of the form of $Y/X$, and that is only in the cases that $A=L_{2l-2}$, or $F\cup C$ is an edge, i.e. $F\cup C \cong P_3$.
\end{lem}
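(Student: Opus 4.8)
For the first assertion I would pin down the shape of $F\cup C$. If $\abs{C}=1$, then $\delta(C/F)=0$ together with $F\leq F\cup C$ forces the single element of $C$ to lie in exactly one (ternary) edge of $F\cup C$, and minimality of $F$ makes $F$ precisely the other two vertices of that edge, so $F\cup C\cong P_3$. Assume now $\abs{C}\geq2$. First, $F\cup C$ is edge-connected: if it split into two parts with no edges between them, then $\delta(\,\cdot\,/\,\cdot\,)$ is additive across the split and each summand is $\geq0$ by $F\leq F\cup C$, so indivisibility of $C$ puts $C$ entirely on one side while the $F$-vertices on the other side become removable, contradicting minimality of $F$. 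Next, by Observation~\ref{obs: points in msa are in relations} every $c\in C$ lies in at least two edges and every $f\in F$ in an edge meeting $C$; since every edge of a generalized path is a block of three consecutive vertices, this forces each $c\in C$ to have both of its (cyclic) neighbours in $F\cup C$, and then edge-connectedness together with maximality of runs of consecutive indices forces the vertex set of $F\cup C$ to be a single interval in the (linear or cyclic) order.

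It remains to see that this interval, as an induced substructure, is a ternary path with $F$ its two endpoints. For $A=P_k$ it is literally an induced ternary sub-path. For $A=L_k$: since $F\cup C\subsetneq A$ omits some $a_p$, and $L_k\setminus\{a_p\}$ is, after reindexing, exactly $P_{k-1}$, again $F\cup C$ is an induced sub-path. For $A=H_k$: if $F\cup C$ omits one of $a_1,a_2,a_k$ it lies inside an induced $P_{k-1}$ (or a $P_{k-2}$ together with an isolated point); otherwise $F\cup C$ straddles the unique ``missing'' triple $\{a_k,a_1,a_2\}$ and hence lies inside two ternary paths glued at $a_1$, and a $\delta$-computation across the cut vertex $a_1$ (of the same flavour as the edge-connectedness argument) shows an indivisible $C$ cannot use both sides of the glue, so once more $F\cup C$ is an induced sub-path. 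Finally, inside an induced $P_m$ with vertices $v_1,\dots,v_m$ and edges $\{v_t,v_{t+1},v_{t+2}\}$: the ends $v_1,v_m$ each lie in a single edge, so (using $\abs{C}\geq2$) they are not in $C$ and hence lie in $F$; then an induction on $r$, using $\delta(\{v_2,\dots,v_r\}/F)=1-[\,v_{r+1}\in F\,]-[\,\{v_{r+1},v_{r+2}\}\subseteq F\,]$ and indivisibility of $C$, forces $v_{r+1}\notin F$ at every step, so $F=\{v_1,v_m\}$ and $F\cup C\cong P_m=P_l$.

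For the second assertion, the first part identifies $Y/X$ as $P_l$ over a two-element base $X$ with no relation among its points, and $\tprqf(Y/X)$ says precisely that those two points are the ends of a ``ternary path of length $l$''. In a generalized path such a ternary path is monotone -- at a triple $\{a_t,a_{t+1},a_{t+2}\}$ the only way to prolong it is to $\{a_{t+1},a_{t+2},a_{t+3}\}$ -- so over a fixed base $\{a_i,a_j\}$ there are at most two such paths, one going each way from $a_i$; hence at most one extension other than $C$. A genuine second extension forces $a_j$ to be reachable from $a_i$ by $l-1$ monotone steps in both directions. When $l=3$ this is exactly the symmetric-single-edge situation, where an adjacent pair lies on its two incident edges -- the case $F\cup C\cong P_3$. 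When $l\geq4$ it forces the underlying chain of edges of $A$ to close up, and a length count (the two segments use $2(l-1)$ vertices and must exhaust a loop-like structure) pins $A$ down to a loop of size $2l-2$, i.e.\ $A=L_{2l-2}$ with $\{a_i,a_j\}$ antipodal.

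The delicate step is the wrap-around bookkeeping for $H_k$ and $L_k$ in the second paragraph: making precise that every edge-connected proper induced subset of a generalized path is (an induced substructure of) a ternary path, which for $H_k$ requires separately excluding the ``two paths glued at $a_1$'' configuration via the cut-vertex $\delta$-argument. The monotonicity count of the third paragraph, and the careful verification that the exceptional second extension occurs only for the stated ambients, is then routine.
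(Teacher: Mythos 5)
Your first paragraph follows the same skeleton as the paper's (very terse) argument: handle $\abs{C}=1$, then for $\abs{C}\geq 2$ use Observation~\ref{obs: points in msa are in relations} to show that each $a_i\in C$ forces its two cyclic neighbours into $F\cup C$, propagate to get a contiguous run bounded by two $F$-vertices, and then use indivisibility of $C$ and minimality of $F$ to conclude $F\cup C\cong P_l$ with $F$ the endpoints. Where you add real value is in the $H_k$ wrap-around case: the paper's proof simply asserts ``by definition of minimal simple algebraicity, these are exactly the points in $F\cup C$'' without noticing that if the run straddles the missing triple $\set{a_k,a_1,a_2}$, the run-plus-endpoints has pre-dimension drop $1$, not $0$, over the two endpoints, so one genuinely needs an extra argument. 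Your cut-vertex $\delta$-computation at $a_1$ (free join of the two sides over $\set{a_1}$, forcing one side to give a strict sub-witness and contradicting indivisibility, and using Observation~\ref{obs: points in msa are in relations} to rule out superfluous $F$-points stranded on the other side) is the right way to close this, and it is worth spelling out rather than calling it ``routine.''

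For the second assertion the paper offers only ``Consequently,'' and your monotonicity-of-ternary-paths argument is a reasonable way to supply it; the bound ``at most one other extension,'' which is the only part used downstream (in Corollary~\ref{pathshappen}), is correct. One caveat: your classification (``pins $A$ down to a loop of size $2l-2$, i.e.\ $A=L_{2l-2}$'') inherits the same slight overstatement as the lemma's own wording. For $l\geq 4$ the ambient $A=H_{2l-2}$ also supports two disjoint $P_l$-extensions over a two-element base — e.g.\ $C_1=\set{a_2,a_3}$ and $C_2=\set{a_5,a_6}$ over $F=\set{a_1,a_4}$ inside $H_6$ — because the missing edge $\set{a_k,a_1,a_2}$ lies in neither half of the hexagon. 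The ``length count'' you invoke does exhaust a $2l-2$-vertex cyclic structure, but that structure can be $H_{2l-2}$ just as well as $L_{2l-2}$. This does not affect the $\mu$-bound argument the paper needs, but the ``only in the cases'' clause should be read as allowing $H_{2l-2}$ too.
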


\begin{proof}
	If $|C|=1$, then $C\cup F$ is an edge, i.e., isomorphic to $P_3$. Otherwise, each $a_i\in C$ must appear in at least two edges in $C\cup F$, hence $a_{i-1},a_{i+1}\in C\cup F$. Proceeding this way in both directions, since by assumption $F\cup C\neq A$, we find $a_m, a_M\in F$ distinct such that $a_j\in C$ for every $m<j<M$ (where if $j>k$ we replace it with $j-k$). By definition of minimal simple algebraicity, these are exactly the points in $F\cup C$, with $F=\set{a_m, a_M}$.
\end{proof}

\begin{cory}\label{pathshappen}
	If $\mu$ is 2-permissive, then $\C_\mu$ contains all generalized paths.
\end{cory}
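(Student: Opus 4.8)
The plan is to verify directly that each generalized path $A$ (one of $P_k$, $H_k$, $L_k$) satisfies the two clauses defining membership in $\C_\mu$, using only that $\mu$ is $2$-permissive.

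First I would check $A\in\C_0$, i.e.\ that $\delta(X)\ge 0$ for every $X\subseteq A$. The case $X=\emptyset$ is trivial. For a nonempty proper subset $X\subsetneq A$, the proof of Lemma \ref{kin3} already records that $X$ has strictly more vertices outside the base than it has edges, whence $\delta(X)\ge 1$. For $X=A$ itself, the proof of Lemma \ref{kin3} shows $A\setminus(\text{base})$ is minimally simply algebraic over the base, so $\delta(A)=\delta(\text{base})=\abs{\text{base}}\ge 0$, the base carrying no edge since it has at most two vertices. Hence $A\in\C_0$.

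Next I would check the $\mu$-bound. Fix a minimally simply algebraic extension $Y/X$ and pairwise disjoint $F,C^1,\dots,C^r\subseteq A$ with each $C^i/F$ of the form of $Y/X$; the goal is $r\le\mu(X,Y,g_A(F))$, and I claim in fact $r\le 2$. If $C^1\cup F=A$, then each $C^j$ with $j\ne 1$ is disjoint from $A$ and hence empty, so $r=1$. Otherwise $C^1\cup F\subsetneq A$, and Lemma \ref{msa in generalized path} says that besides $C^1/F$ the path $A$ contains at most one further extension of the form of $Y/X$, so $C^2,\dots,C^r$ contribute at most one more and $r\le 2$. As $\mu$ is $2$-permissive, $\mu(X,Y,g_A(F))\ge 2\ge r$. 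Together with the previous paragraph this gives $A\in\C_\mu$. I do not anticipate a real obstacle here: Lemma \ref{msa in generalized path} has already isolated the combinatorial fact that a generalized path carries essentially no repeated minimal simple algebraicities, and the $\C_0$ bookkeeping is embedded in the proof of Lemma \ref{kin3}; the only point needing care is to split off the degenerate configuration $C^1\cup F=A$ before invoking Lemma \ref{msa in generalized path}, whose statement is written for proper sub-configurations.
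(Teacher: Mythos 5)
Your proof is correct, and it is essentially the argument the paper has in mind: the corollary is stated without proof as an immediate consequence of Lemmas \ref{kin3} and \ref{msa in generalized path}, and your two checks (membership in $\C_0$ via the vertex/edge count from the proof of Lemma \ref{kin3}, and the $\mu$-bound $r\le 2$ via Lemma \ref{msa in generalized path} after splitting off the degenerate case $C^1\cup F=A$) are exactly the pieces those lemmas supply. The only detail worth keeping explicit is the degenerate case, which you already handle, since Lemma \ref{msa in generalized path} is stated only for proper sub-configurations $F\cup C\subsetneq A$.
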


Now Lemma \ref{unblockables over large base}, Lemma \ref{kin3}, and Corollary \ref{pathshappen} together yield:

\begin{cory}\label{unblockablesexist}
	There exists a recursive sequence of 3-unblockable extensions $X_{k,l}\subseteq Y_{k,l}$ with each $X_{k,l}$ having size $k$. Further, if $\mu$ is 3-permissive, then $Y_{k,l}\in \C_{\mu}$ for every $k,l$.
\end{cory}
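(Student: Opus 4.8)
Looking at Corollary \ref{unblockablesexist}, the statement is essentially an assembly of the three prior results: Lemma \ref{unblockables over large base} gives infinitely many $2$-unblockable (hence $3$-unblockable) extensions over each base of size $k \geq 3$; Lemma \ref{kin3} gives $3$-unblockable generalized paths, which cover base sizes $0$, $1$, $2$ (via $L_k$, $H_k$, $P_k$ respectively); and Corollary \ref{pathshappen} together with the ``moreover'' clause of Lemma \ref{unblockables over large base} handles membership in $\C_\mu$ under $3$-permissiveness. So the proof should just be a short paragraph indexing these together and checking that the resulting sequence is recursive.

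The plan is as follows. First, I would fix the indexing: for $k \geq 3$ and $l \in \omega$, set $X_{k,l} \subseteq Y_{k,l}$ to be the $l$-th extension $\hat{A} \subseteq \hat{D}_{k+1+l+1}$ (or similar) from the proof of Lemma \ref{unblockables over large base}, noting $\abs{\hat{A}} = k$ when $\hat{A} = \{a_1,\dots,a_{k-1},g\}$ — I need to be slightly careful that the base there has size $k$ exactly, re-reading the proof: the base is $\set{a_1,\dots,a_k,g}$ which has size $k+1$, so the indexing shifts by one, but this is a routine bookkeeping matter. For $k \in \{0,1,2\}$ I would take $X_{k,l} \subseteq Y_{k,l}$ to be the generalized path $L_{l'}$, $H_{l'}$, $P_{l'}$ respectively (for $l'$ large enough and varying with $l$), whose bases have sizes $0$, $1$, $2$ by definition. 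Each of these is $3$-unblockable: for $k \geq 3$ by $2$-unblockability (Lemma \ref{unblockables over large base}) which trivially implies $3$-unblockability since a $3$-permissive $\mu$ is $2$-permissive; for $k \leq 2$ directly by Lemma \ref{kin3}. Recursiveness is clear since all the hypergraphs $\hat{D}_t$, $P_k$, $H_k$, $L_k$ have explicitly given, uniformly computable edge sets.

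For the ``further'' clause: if $\mu$ is $3$-permissive, then it is in particular $2$-permissive, so Corollary \ref{pathshappen} gives $P_k, H_k, L_k \in \C_\mu$, handling $k \leq 2$; and the ``moreover'' in Lemma \ref{unblockables over large base} gives $\hat{D}_t \in \C_\mu$ (this used only that the ternary-edge structure lies in $\C_\mu$, which is our standing assumption, together with $D_t \in \C_\mu$ from Lemma \ref{if edges than D_t is classy}), handling $k \geq 3$. So $Y_{k,l} \in \C_\mu$ for all $k,l$.

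I do not anticipate a genuine obstacle here — the corollary is bookkeeping. The only mild subtlety, and the one place I would be careful, is matching up the base sizes precisely with the index $k$ (the $+1$ shifts coming from the $g$ vertex in $\hat{A}$ and from the fact that $P_k$ has a $2$-element base, $H_k$ a $1$-element base, $L_k$ an empty base), and confirming that for each fixed $k$ the family is genuinely infinite (which it is, since $t$, resp. the path length, ranges over an infinite set and distinct lengths give non-isomorphic extensions). With that in place the corollary follows immediately. Concretely I would write:

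\begin{proof}
	For $k\le 2$, take $X_{k,l}\subseteq Y_{k,l}$ to be, respectively, the ternary $l'$-loop $L_{l'}$ (base size $0$), the ternary closed-$l'$-path $H_{l'}$ (base size $1$), or the ternary $l'$-path $P_{l'}$ (base size $2$), for a strictly increasing recursive choice of $l'=l'(k,l)$ with $l'\ge 3$; distinct lengths give non-isomorphic extensions, so each family is infinite. By Lemma \ref{kin3} each such extension is $3$-unblockable. For $k\ge 3$, take $X_{k,l}\subseteq Y_{k,l}$ to be the extension $\hat A\subseteq \hat D_t$ from the proof of Lemma \ref{unblockables over large base}, with $\hat A$ chosen of size $k$ and $t=t(k,l)$ a strictly increasing recursive function of $l$ with $t>k+1$; by Lemma \ref{unblockables over large base} this is $2$-unblockable, hence $3$-unblockable, since any $3$-permissive $\mu$ is $2$-permissive. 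All of the hypergraphs $P_{l'},H_{l'},L_{l'},\hat D_t$ have uniformly computable edge sets, so the sequence $(X_{k,l}\subseteq Y_{k,l})_{k,l}$ is recursive.

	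Finally, suppose $\mu$ is $3$-permissive. Then $\mu$ is $2$-permissive, so by Corollary \ref{pathshappen} every generalized path, and in particular $Y_{k,l}$ for $k\le 2$, lies in $\C_\mu$. For $k\ge 3$, the ``moreover'' clause of Lemma \ref{unblockables over large base} gives $\hat D_t\in\C_\mu$, so $Y_{k,l}\in\C_\mu$. Hence $Y_{k,l}\in\C_\mu$ for all $k,l$.
\end{proof}
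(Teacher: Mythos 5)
Your proof is correct and is essentially the paper's intended argument: the paper simply states that Lemma \ref{unblockables over large base}, Lemma \ref{kin3}, and Corollary \ref{pathshappen} ``together yield'' the corollary, and your write-up fills in exactly the bookkeeping (splitting by base size $k\le 2$ versus $k\ge 3$, noting that $2$-unblockable implies $3$-unblockable, and observing recursiveness of the explicit hypergraph families). The one subtlety you flag — the $+1$ shift from the $g$ vertex in $\hat A$ — is correctly handled, and your reading of the ``moreover'' clause of Lemma \ref{unblockables over large base} as covering $\C_\mu$-membership for $k\ge 3$ matches the paper.
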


\section{Constructing $T$}

In this section, we construct a theory $T:=T_{S_1}$ which is determined by a given r.e.\ set $S_1$. For any $A\subseteq \omega$, we denote by $A^{[i]}$ the $i$-th column of $A$, i.e., $A^{[i]} = \set{j\mid \langle i, j \rangle \in A}$. For the sake of uniformity of notation below, we assume that for every $i$, there are at least two numbers in $S_1^{[i]}$. For any r.e.\ set $S_1$, this construction will produce a strongly minimal theory $T$. In the next section, we will show that this theory $T$ (for any r.e.\ set $S_1$) is so that $\SRM(T)\supseteq [0,n]\cup \{\omega\}$. 

Given the r.e.\ set $S_1$, we define $S_0$ to the be the r.e. set which in each column $S_0^{[i]}$ contains all elements of $S_1^{[i]}$ except the last 2 enumerated into $S_1^{[i]}$. We assume that for each $i$, $S_1^{[i]}$ contains at least 2 elements (and the particular choice of $S_1$ we make below will have this property), so if $S_1^{[i]}$ is not infinite, then $S_1$ contains exactly two elements in the $i$th column which are not in $S_0$, which we call $\langle i, j_0\rangle, \langle i, j_1\rangle$, where $\langle i,j_0\rangle$ enters $S_1$ first. Otherwise, $S_1^{[i]}$ is infinite and $S_1^{[i]}= S_0^{[i]}$. In this case, the first two cases of the definition of $\mu$ below simply cannot hold since $\langle i,j_0\rangle$ and $\langle i,j_1\rangle$ are not defined.

Let $\L=\{R\}$ and $\L'=\{R\}\cup \{R_m\mid m\in \omega\}$ where $R$ is a ternary relation symbol and each $R_m$ is $n+2$-ary (the same $n$ as in $[0,n]\cup \set{\omega}$).

The outline of our construction is as follows: We first construct a theory $\hat{T}$ in the language $\hat{\mathcal{L}}=\{R\}\cup \{R_i\mid S_1^{[i]}\neq S_0^{[i]}\}$ via an amalgamation construction. We will then let $T$ be the reduct to the language $\mathcal{L}$. We will choose $\mu$ so that $\hat{T}$ is a definitional expansion of $T$. In particular, if $R_i$ holds on a tuple $\bar{a}$, then $\mu$ will allow extra extensions of some form over $\bar{a}$.

In building the recursive model $\mathcal{M}$ of dimension $\leq n$ or $\omega$, we work with the language $\mathcal{L}'$ and construct a model $\mathcal{M}'$ so that $\mathcal{M}=\mathcal{M}'\vert \mathcal{L}$. This is necessary since $\hat{L}$ is not recursive, so we don't know which relations are the important ones to consider. When we see a relation $R_i$ holding on a tuple, we are unsure if $R_i\in \hat{L}$, so the $\delta$-function might give a smaller value in $\mathcal{M}'$ than the correct dimension of the set in $\mathcal{M}$. Thus the dimension of $\mathcal{M}$ could be larger than the dimension of $\mathcal{M}'$. This poses no problem for the construction of the saturated model, since $\dim(\mathcal{M})\geq \dim(\mathcal{M}')=\omega$ ensures that $\mathcal{M}$ is saturated. But this poses a problem for constructing the recursive finite-dimensional models.

This is precisely the difficulty which we will use to ensure that $\SRM(T)\cap [n+1,\omega)=\emptyset$. In particular, if the enemy attempts to construct a model of dimension $k>n$ with basis $\bar{b}$, we will find some element $c$ so that $c\notin \acl(\bar{b})$. In particular, this $c$ will be some element which appears to satisfy a relation $R_i(\bar{b}_0,c)$ for $\bar{b}_0\subseteq \bar{b}$ of length $n+1$ and $R_i\notin \hat{L}$.

We choose $\mu$ to specifically help the recursive construction of models of dimension $\leq n$ in a way that does not help finite-dimensional models of larger dimension. In particular, we need a way to remove $R_i(\bar{a})$ from a tuple $\bar{a}$ if we suspect that $R_i\notin \hat{L}$. We will do this by defining $\mu$ so that having $g_C(\bar{a})$ small enough will allow an extra extension of some form over $\bar{a}$ in $C$. So there are two reasons a tuple might get an extra extension of this form: Either $R_i(\bar{a})$ holds or $g_C(\bar{a})$ is small enough. So if $g_C(\bar{a})$ is small enough we can remove the relation $R_i(\bar{a})$. We will ensure models of dimension $\leq n$ always have the opportunity to remove relations $R_i$ for $R_i\notin \hat{L}$, which we will be able to do since every $n+2$-tuple on which $R_i$ holds will have a finite $g$-value, but models of higher dimension will not have this opportunity.

\begin{defn}\label{def: limited away}
	We say that a relation symbol $R_i$ is ``limited away'' if $S_0^{[i]}=S_1^{[i]}$.
	
	Let $\hat{\L}=\{R\}\cup \{R_i\mid R_i\text{ is not limited away}\}$.
\end{defn}

We fix $\zeta$ to say that no relation holds on a subtuple of one where another relation holds (i.e. for each $U\neq V\in \hat{\L}$, if $U(\bar{x})$ and $\bar{y}\subseteq \bar{x}$, then $\neg V(\bar{y})$ holds). Note that $\zeta$ holds on each of the particular structures mentioned in Section \ref{Technicals}, as each of those use only the single ternary relation symbol $R$, thus the results about $\C_{\mu}$ in Section \ref{Technicals} hold for $\C_{\mu}^\zeta$ as well. Enumerate the relative quantifier-free types of infinitely many 3-unblockable extensions as built in section \ref{unblockthis} over a set of size $n+2$: $\langle \Omega_i\mid i\in \omega\rangle$. Note that these use only the relation symbol $R$.

We fix the function $\mu$ defined as follows:

\[\mu(A,B,m)= 
\begin{cases}
\abs{A}+4 &\text{ if $B/A$ is an $\Omega_{\langle i,j_0\rangle }$-extension and $R_i(A)$}\\
\abs{A}+4 &\text{ if $B/A$ is an $\Omega_{\langle i,j_1\rangle }$-extension, $R_i(A)$,} \\ & \text{ and $m\geq \langle i,j_1\rangle $}\\
\abs{A}+4 &\text{ if $B/A$ is an $\Omega_{\langle i,j\rangle }$-extension, $\langle i,j\rangle\in S_0$}\\
\abs{A}+3 &\text{ otherwise}
\end{cases}
\]

\begin{obs}
	$\mu$ is 3-permissive, so each $\Omega$-extension occurs the $\mu$-maximal number of times over any subset of a model of $\Th{\mathcal{M}_\mu^\zeta}$.
\end{obs}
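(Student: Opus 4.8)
The plan is to read 3-permissiveness straight off the four-case definition of $\mu$, and then obtain the ``so'' clause by feeding this into the unblockability results of Section~\ref{unblockthis}.

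First I would check 3-permissiveness directly. The function $\mu$ is defined only on minimally simply algebraic extensions $B/A$, and in each of its four cases the value is either $\abs{A}+3$ or $\abs{A}+4$; since $\abs{A}\geq 0$ this is always $\geq 3$. (Along the way one checks, routinely, that $\mu$ is a legitimate $\mu$-function in the sense of Definition~\ref{def:oftheform}: whether $R_i(A)$ holds is determined by $\tp_{q.f.}(A)$ restricted to the finite sublanguage $\{R,R_i\}$, so each case depends on only finitely many relations; $\mu(A,B,m)$ is eventually constant in $m$ and there equals $\mu(A,B,\infty)$; and $\mu(A,B,m)\geq\abs{A}\geq\delta(A)$.)

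For the consequence, fix a model $M$ of $\Th{\mathcal{M}^\zeta_\mu}$ --- equivalently $M\equiv\mathcal{M}^\zeta_\mu$, as that theory is complete --- and fix one of the $3$-unblockable extensions $Y_i/X_i$ built in Section~\ref{unblockthis} whose relative quantifier-free type is the enumerated $\Omega_i$, so $\abs{X_i}=n+2$. By Corollary~\ref{unblockablesexist}, $3$-permissiveness of $\mu$ gives $Y_i\in\C_\mu$, and since $Y_i$ uses only the ternary symbol $R$ it satisfies $\zeta$ vacuously, hence $Y_i\in\C^\zeta_\mu$. I would then invoke the Observation following the definition of $k$-unblockability, in its $\C^\zeta_\mu$-form: the transfer from $\C_\mu$ to $\C^\zeta_\mu$ is legitimate because $\zeta$ is $\forall$-axiomatizable and preserved by free joins, so for $X_i\subseteq Z\in\C^\zeta_\mu$ the free join $Y_i\oplus_{X_i}Z$ lies in $\C^\zeta_\mu$ exactly when it lies in $\C_\mu$, and every structure mentioned in Section~\ref{unblockthis} satisfies $\zeta$. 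That Observation then says: for every $Z\subseteq M$ with $Z\cong X_i$, the model $M$ contains $\mu(X_i,Y_i,g_M(Z))$ pairwise disjoint extensions over $Z$, each of the form of $Y_i/X_i$. Since $M\models\zeta$ and satisfies the schema (2') (by Lemma~\ref{elementaryconditions}, as $M\equiv\mathcal{M}^\zeta_\mu$), the $\mu$-bound built into $\C^\zeta_\mu$ also gives the reverse inequality, so the number of such disjoint extensions is exactly $\mu(X_i,Y_i,g_M(Z))$. As $i$ and $Z$ were arbitrary, this is the asserted $\mu$-maximality.

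I do not anticipate a real obstacle: the amalgamation work is already done in Section~\ref{unblockthis}, and what remains is bookkeeping. The two points that need a word of care are (i) reading ``over any subset'' as ``over any subset isomorphic to the base $X_i$'' (nothing else is a base of an $\Omega_i$-extension), and (ii) checking that the $\zeta$-decorated forms of the Section~\ref{unblockthis} results apply --- which they do precisely because the $\Omega$-extensions are built using only $R$ and $\zeta$ is preserved by free joins.
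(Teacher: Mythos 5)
Your verification of 3-permissiveness is correct, and the route through Corollary~\ref{unblockablesexist} and the Observation following the definition of $k$-unblockability is the right one. But your point (i) --- reading ``over any subset'' as ``over any subset isomorphic to the base $X_i$,'' on the grounds that nothing else can be a base of an $\Omega_i$-extension --- is not a benign reformulation; it creates a genuine gap. The sets that matter in the sequel are precisely the ones \emph{not} isomorphic to $X_i$: in Lemma~\ref{edef}, the observation is invoked over a tuple $\bar{x}$ on which $R_i(\bar{x})$ holds, and indeed the first two clauses of the definition of $\mu$ are non-vacuous only for bases $A$ with $R_i(A)$. Since $X_i$ carries no relations, $X_i$ is not a substructure of such a $\bar{x}$, and your invocation of the Observation with $Z\cong X_i$ does not touch this case. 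The phrase ``over any subset'' is intended literally, and the bound $\mu(X,Y,g_M(Z))$ is evaluated at the pair whose first coordinate has the isomorphism type of $Z$ itself --- which may include an $R_i$-edge.

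The missing step is small but has to be said: for an arbitrary $(n+2)$-set $Z$ in $M$ (with or without an $R_i$-edge, with or without $R$-edges, subject to $\zeta$), one forms $Y'=Z\cup(Y_i\setminus X_i)$ by freely adjoining a copy of the $\Omega_i$-pattern over $Z$. The pair $Z\subseteq Y'$ is still minimally simply algebraic (minimality and simple algebraicity of $Y_i/X_i$ are unaffected by adding relations internal to the base), and each element of $Y'\setminus Z$ still lies in at most three $R$-edges of $Y'$, so Lemma~\ref{unblockability from permissiveness} gives that $Z\subseteq Y'$ is $3$-unblockable for \emph{that} base type; one also checks $Y'\in\C^\zeta_\mu$ (a short verification using $3$-permissiveness and the fact that the only added relations are $R$-edges). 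Only then does the Observation deliver the maximal count over the $Z$'s that actually occur. Your argument as written proves the claim only over relation-free $(n+2)$-sets, which is exactly the case that is \emph{not} used later.
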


Let $\hat{\mathcal{C}}$ be the class of $\hat{\mathcal{L}}$-structures $\mathcal{C}_\mu^\zeta$, and let $\mathcal{M}$ be the generic built from $\hat{\mathcal{C}}$. Let $\hat{T}$ be the theory of $\hat{\M}$. Let $\M$ be the reduct of $\hat{\M}$ to the language $\L$, and let $T$ be the theory of $\M$. It follows from the general construction that both $\hat{T}$ and $T$ are strongly minimal theories.

\begin{lem}\label{edef}
	Let $R_i$ be a relation symbol which is not limited away, i.e., $R_i\in \hat{\L}$. Then $\hat{T}\models R_i(\bar{x})\leftrightarrow \exists^{n+6}\bar{y}\,\Omega_{\langle i,j_0\rangle}(\bar{x},\bar{y})$.
	
	Thus $\hat{\M}$ is a definitional expansion of $\M$.
\end{lem}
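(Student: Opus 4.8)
The plan is to establish the biconditional $\hat{T}\models R_i(\bar{x})\leftrightarrow \exists^{n+6}\bar{y}\,\Omega_{\langle i,j_0\rangle}(\bar{x},\bar{y})$ by reasoning about the generic model $\hat{\M}$ and appealing to the characterization of $\mu$. Since $\hat\M$ is saturated and its theory is determined by the conditions (1),(2'),(3'),(3''), it suffices to work directly in $\hat\M$. First I would observe that an $\Omega_{\langle i,j_0\rangle}$-extension is a $3$-unblockable minimally simply algebraic extension (as built in Section \ref{unblockthis}), so by Corollary \ref{unblockablesexist} and $3$-permissiveness of $\mu$ (together with the unblockability observation relating $\C_\mu$-genericity to the number of realizations), over any tuple $\bar a\cong X$ of the appropriate size in $\hat\M$ there are exactly $\mu(X, Y_{\langle i,j_0\rangle}, g_{\hat\M}(\bar a))$ disjoint extensions of the form $\Omega_{\langle i,j_0\rangle}$ over $\bar a$. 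Here the relevant base $X$ has size $n+2$, so $\abs{X}+3 = n+5$ and $\abs{X}+4 = n+6$.

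Next I would unwind the definition of $\mu$: for a base $\bar a$ of size $n+2$, the value $\mu(X, Y_{\langle i,j_0\rangle}, m)$ equals $n+6$ precisely when one of the first three clauses applies to an $\Omega_{\langle i,j_0\rangle}$-extension, and equals $n+5$ otherwise. The second clause refers to $\Omega_{\langle i,j_1\rangle}$ and the third to $\Omega_{\langle i,j\rangle}$ with $\langle i,j\rangle\in S_0$; since the $\Omega_k$ are enumerated with distinct relative quantifier-free types, an $\Omega_{\langle i,j_0\rangle}$-extension cannot simultaneously be an $\Omega_{\langle i,j_1\rangle}$- or $\Omega_{\langle i,j\rangle}$-extension for $j\neq j_0$, and $\langle i,j_0\rangle\notin S_0$ by the definition of $S_0$ (it is one of the last two elements enumerated into $S_1^{[i]}$). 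Hence the only clause that can make $\mu(X, Y_{\langle i,j_0\rangle}, m) = n+6$ is the first: $B/A$ is an $\Omega_{\langle i,j_0\rangle}$-extension and $R_i(A)$ holds. So $\mu(X, Y_{\langle i,j_0\rangle}, g_{\hat\M}(\bar a)) = n+6$ iff $\hat\M\models R_i(\bar a)$, and otherwise it is $n+5$. Combining with the previous paragraph, $\hat\M\models R_i(\bar a)$ iff there are exactly $n+6$ (equivalently, at least $n+6$) disjoint $\Omega_{\langle i,j_0\rangle}$-extensions over $\bar a$, which is the stated formula; note this argument does not depend on $g_{\hat\M}(\bar a)$ since neither relevant $\mu$-value depends on $m$ in the first clause, so no subtlety about the $g$-function arises. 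One should also check that $\zeta$ forces the relative quantifier-free type of a genuine $\Omega_{\langle i,j_0\rangle}$-extension to carry no extra relations beyond those in $\L_{Y/X}$, so the notion ``of the form of'' matches realizations of the formula $\Omega_{\langle i,j_0\rangle}(\bar x,\bar y)$ exactly; this is routine given that the $\Omega$-extensions use only the single symbol $R$.

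The concluding claim, that $\hat\M$ is a definitional expansion of $\M$, then follows: each $R_i\in\hat\L$ is defined over the reduct $\M$ by the $\L$-formula $\exists^{n+6}\bar y\,\Omega_{\langle i,j_0\rangle}(\bar x,\bar y)$, which uses only the relation symbol $R$. Thus every relation of $\hat\L$ is $\L$-definable in $\M$, so $\hat\M$ and $\M$ have the same definable sets and $\hat T$ is a definitional expansion of $T$.

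The main obstacle I anticipate is verifying carefully that an $\Omega_{\langle i,j_0\rangle}$-extension realized inside $\hat\M$ is never accidentally ``of the form of'' a competing $\Omega_k$-extension for the wrong $k$, so that the first clause of the $\mu$ definition is genuinely the unique source of the value $n+6$; this requires using that the $\Omega_k$ have pairwise distinct relative quantifier-free types and that $\zeta$ prevents spurious relations on subtuples. A secondary point is confirming that $3$-unblockability genuinely delivers the \emph{maximal} number $\mu(X,Y,g_{\hat\M}(\bar a))$ of realizations over \emph{every} base in $\hat\M$ (not just strong ones), which is exactly the content of the unblockability observation in Section \ref{unblockthis} applied to $M\equiv\M_\mu$, so it is available to cite rather than reprove.
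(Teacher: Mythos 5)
Your proof is correct and follows essentially the same route as the paper: both arguments combine (i) the computation that for an $\Omega_{\langle i,j_0\rangle}$-extension over a base of size $n+2$, the $\mu$-value is $n+6$ exactly when $R_i$ holds on the base and $n+5$ otherwise, with (ii) unblockability of $\Omega_{\langle i,j_0\rangle}$ to guarantee the maximal number of realizations over any base in a model of $\hat T$, so that the $\mu$-bound becomes an exact count. You are somewhat more explicit than the paper in verifying that the second and third clauses of the $\mu$-definition cannot fire on $\Omega_{\langle i,j_0\rangle}$-extensions (distinct relative quantifier-free types, and $\langle i,j_0\rangle\notin S_0$ by construction of $S_0$), and in noting that the first clause does not reference the $g$-value; these are useful sanity checks that the paper leaves implicit, but the underlying argument is the same.
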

\begin{proof}
	We first verify the leftward direction. In the definition of $\mu$, we see that 
	\[\mu(A,B,m)=
	\begin{cases}
	\abs{A}+4=n+6 &\text{ if $R_i(A)$}\\
	\abs{A}+3=n+5 &\text{ otherwise.}
	\end{cases}
	\]
	where $\tprqf(B/A)=\Omega_{\langle i,j_0\rangle}$.
	Thus, if $\neg R_i(A)$, then $\mu$ enforces that $\neg \exists^{n+6}\bar{y}\,\Omega_{\langle i,j_0\rangle}(\bar{x},\bar{y})$.
	
	If $R_i(\bar{x})$ holds, then since $\Omega_{\langle i,j_0\rangle}$ is an unblockable extension, in any model of $\hat{T}$, we have the maximal number of allowed extensions over any set, so we must have $\exists^{n+6}\bar{y}\,\Omega_{\langle i,j_0\rangle}(\bar{x},\bar{y})$.
\end{proof}

\begin{lem}\label{lem: tuple must extend to R_is}
	Let $R_i\in \hat{\L}$. For every $\bar{x}$ of size $n+1$ containing no tuple on which $R$ holds, there are exactly $n+4$ elements $y$ so that $\hat{\M}\models R_i(\bar{x},y)$.
\end{lem}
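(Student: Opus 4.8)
The plan is to directly count the $y$ with $\hat{\M}\models R_i(\bar{x},y)$ by relating such $y$ to certain simply algebraic extensions over subtuples of $\bar x$. The natural candidates are the $n+2$-tuples $\bar x y$ on which $R_i$ holds; by the construction of $\mu$ and unblockability, having $R_i(\bar x y)$ forces $\bar x y$ to serve as a base supporting the $\mu$-maximal number of $\Omega_{\langle i,j\rangle}$-extensions for the appropriate $j$ (e.g.\ $j_0$), so in particular such tuples exist in $\hat\M$ and are ``visible.'' First I would observe that $\zeta$ forbids $R$ or $R_i$ from holding on any proper subtuple of $\bar x y$, so since $\bar x$ contains no $R$-edge, the tuple $\bar x y$ with a unique relation $R_i(\bar x y)$ is itself a structure in $\hat{\mathcal C}$ (it is in $\C_0$ and respects the $\mu$-bounds vacuously on a set this small), hence $\bar x$ strong in it, and this structure embeds strongly into $\hat\M$ by genericity. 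This produces at least one such $y$; then I would argue the set $\{\bar x\}\cup\{y : R_i(\bar x, y)\}$ is itself an element of $\hat{\mathcal C}$ with $\bar x$ strong in it, and read off the maximal possible size of this set from the $\mu$-bound.

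The key step is to identify which $\mu$-clause governs the tuple $\bar x y$ as a \emph{base}. A single $R_i$-edge on $\bar x y$ with nothing else means $\delta(\bar x y,\hat\M)=\abs{\bar x y}-1 = n+1$; this is strong in $\hat\M$ (any superset adding edges can only be built consistently), so $g_{\hat\M}(\bar x y)=\infty$. With $\abs{\bar x y}=n+2$, the relevant instance of $\mu$ is $\mu(\bar x y, B, \infty)$ where $B/\bar x y$ is, say, an $\Omega_{\langle i,j_0\rangle}$-extension; since $R_i(\bar x y)$ holds, the first clause of the definition of $\mu$ gives $\mu = \abs{\bar x y}+4 = (n+2)+4 = n+6$. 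But I want to count the $y$'s, i.e.\ the occurrences of the extension ``$R_i(\bar x,\text{--})$'' \emph{over $\bar x$}. So instead I consider $\bar x$ as the base and ask for the maximal number of disjoint $y$'s so that $\bar x y$ has the form ``$R_i$ holds on $\bar x y$''. This is a simply algebraic extension of $\bar x$ of $\delta$-defect $1$; I must check it is minimally simply algebraic (the base $\bar x$ is minimal since removing any coordinate breaks the $n+2$-arity of $R_i$) and that its relative quantifier-free type is exactly that of the relevant $\Omega$. Wait — the $\Omega_i$ are $R$-extensions, not $R_i$-extensions, so this extension ``$R_i(\bar x,y)$'' is a \emph{different} minimally simply algebraic extension, and I need the value of $\mu$ on \emph{it}. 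Here is where Lemma~\ref{edef} is essential: it says $R_i(\bar x, y)$ is equivalent to $\exists^{n+6}\bar w\,\Omega_{\langle i,j_0\rangle}(\bar x y,\bar w)$, i.e.\ $R_i$ on $\bar x y$ is definable and so the extension ``$R_i(\bar x,y)$ plus its $n+6$ attached $\Omega$-extensions over $\bar xy$'' is the object whose multiplicity over $\bar x$ is $\mu$-controlled, with the ``otherwise'' clause giving $\abs{\bar x}+3=(n+1)+3=n+4$.

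The main obstacle — and the step I would spend the most care on — is verifying that the single-$R_i$-edge extension $\bar x y /\bar x$ falls under the ``otherwise'' clause of $\mu$ (so its multiplicity bound is $\abs{\bar x}+3=n+4$) while \emph{simultaneously} checking that this bound is achieved, i.e.\ that $n+4$ many disjoint such $y$'s actually embed strongly into $\hat\M$. For the lower bound I would build the finite structure consisting of $\bar x$ together with $n+4$ new points $y_1,\dots,y_{n+4}$, one $R_i$-edge $R_i(\bar x, y_\ell)$ for each $\ell$, and for each $\ell$ also $n+6$ copies of the $\Omega_{\langle i,j_0\rangle}$-extension glued over $\bar x y_\ell$ (forced by Lemma~\ref{edef} once $R_i(\bar x, y_\ell)$ is present), check this is in $\C_0$, respects all $\mu$-bounds (using $3$-permissiveness and unblockability to handle the $\Omega$-extensions, and the relevant $\mu$-clauses to handle the $R_i$-extensions), and that $\bar x$ is strong in it; then genericity of $\hat\M$ (property (3)) embeds it strongly, yielding $\ge n+4$ many $y$. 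For the upper bound, any set of disjoint $y$'s with $R_i(\bar x, y)$ inside $\hat\M$, closed off to a strong finite substructure, must respect $r\le\mu(\bar x, \bar x y, g_{\hat\M}(\bar x))$; since $g_{\hat\M}(\bar x)=\infty$ and the extension is not an $\Omega$-extension, the ``otherwise'' clause pins this at $n+4$. I would also double-check, using $\zeta$, that distinct $y$'s with $R_i(\bar x, y)$ are automatically disjoint (they are singletons or the extension is simply algebraic so Lemma~\ref{disjointnessfromstrength} applies with $\bar x$ strong), closing the count at exactly $n+4$.
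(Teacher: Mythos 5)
You correctly identify the core computation — the single-$R_i$-edge extension $\bar x y/\bar x$ is minimally simply algebraic over the whole of $\bar x$, is not an $\Omega$-extension, and therefore falls under the ``otherwise'' clause of $\mu$, giving the bound $\abs{\bar x}+3=n+4$ — and this is indeed the heart of the matter. But there is a genuine gap in your lower-bound argument. You propose to build a finite structure over $\bar x$ with $n+4$ new $R_i$-neighbours and embed it strongly into $\hat\M$ via property (3) of the generic. Property (3) requires $\bar x\leq\hat\M$, and nothing in the hypothesis gives that: you are told only that $R$ holds on no subtuple of $\bar x$. It can easily happen that $d(\bar x)<n+1$ (for instance if some $x_i$ sits inside a low-dimensional closed set), in which case $\bar x$ is not strong and $g_{\hat\M}(\bar x)$ is finite. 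Your interspersed assertions that $g_{\hat\M}(\bar x)=\infty$ and that $\bar x y$ ``is strong in $\hat\M$'' are unjustified for the same reason. The lemma genuinely has to cover such non-strong $\bar x$, since it is immediately used to give a universal definition of $R_i$ over all $(n+1)$-tuples with no $R$-edge.

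The paper's proof avoids the issue by appealing directly to property (3'') with $B=\bar x$ and $C=\bar x y$; (3'') is formulated exactly so as to yield realizations over a base that need not be strong, with the count $\mu(\bar x,\bar x y,g_{\hat\M}(\bar x))$, and since the ``otherwise'' clause is insensitive to its third argument this equals $n+4$ regardless of $g_{\hat\M}(\bar x)$. All that remains is to check the two side hypotheses of (3''): (a) no $Y\subseteq\hat\M$ of size $\leq 1$ with $\delta(Y\vert\{R_i\}/\bar x\vert\{R_i\})<0$, which is automatic because $R_i$ is symmetric and $(n+2)$-ary so $\bar x\cup Y$ carries at most one $R_i$-occurrence; and (b) no $H/G$ inside $C$ whose $\mu$-value changes under amalgamation, which holds because the only relation in $C$ is the single $R_i(\bar x y)$, so no $H/G$ inside $C$ is an $\Omega$-extension and the ``otherwise'' clause, being constant in $m$, cannot change. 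The upper bound is then the $\mu$-bound itself (condition (2')). If you replace your strong-embedding step with a direct appeal to (3''), the rest of your reasoning goes through.
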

\begin{proof}
	We apply property (3'') of any model of $\hat{T}$. Let $B=\bar{x}$ and $C=\bar{x}y$ with the relative quantifier-free type consisting of the single relation $R_i(\bar{x},y)$. This is easily seen to be in $\C^\zeta_{\mu}$. Clearly, since $R_i$ is a symmetric $n+2$-ary relation, there is no set $Y$ with $\abs{Y}\leq |C\setminus B|= 1$ so that $\delta(Y\vert \{R_i\}/A\vert \{R_i\})<0$. Let $H',G'\subseteq C$ be of the form of a minimally simply algebraic $H/G$.
	Since the only relation that holds on $C$ is the single relation $R_i(\bar{x}y)$, we must have $H'$ is a singleton and $G'$ is the remainder. So, $H/G$ is not an $\Omega_i$-extension for any $i$ and  $\mu(G,H,g_C(G))=\mu(G,H,g_{\M\oplus_B C}(G))=\abs{G}+3$.  So (3'') guarantees that there are $\mu(\bar{x},\bar{x}y,g_\M(\bar{x}))=n+4$ realizations of this relative quantifier-free type over $\bar{x}$.
\end{proof}

\begin{cory}
	Each relation $R_i$ is both existentially and universally definable in $\M$.
\end{cory}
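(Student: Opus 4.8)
The plan is to show that each $R_i$ (for $R_i \in \hat{\L}$) is both existentially and universally definable in $\M$, i.e.\ in the reduct to $\L = \{R\}$. The existential definition is essentially immediate from Lemma~\ref{edef}: the formula $\exists^{n+6}\bar{y}\,\Omega_{\langle i,j_0\rangle}(\bar{x},\bar{y})$ uses only the relation symbol $R$ (since each $\Omega$-extension uses only $R$), and the quantifier $\exists^{n+6}$ is a genuine first-order existential formula (it asserts the existence of $n+6$ pairwise-disjoint witnessing tuples), so this already exhibits $R_i$ as $\exists$-definable in $\M$. What remains is to produce a universal definition, equivalently an existential definition of $\neg R_i(\bar{x})$.

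For the universal direction, I would start from Lemma~\ref{lem: tuple must extend to R_is}, which says that whenever $\bar{x}$ has size $n+1$ and contains no $R$-edge, there are exactly $n+4$ elements $y$ with $\hat{\M} \models R_i(\bar{x},y)$; combined with $\zeta$ (which forbids $R$-edges on subtuples of $R_i$-tuples) this pins down exactly which $(n+2)$-tuples carry $R_i$. The idea is then to characterize $\neg R_i(\bar{x})$ on a full $(n+2)$-tuple $\bar{x} = (\bar{x}_0, x_{n+2})$ — here $\bar{x}_0$ has length $n+1$ — in $\L$-terms. Since $\hat{T}$ is strongly minimal and model complete (indeed $\hat{T}$ is a definitional expansion of $T$, and $T$ is model complete by Lemma~\ref{modelcompletenessprereduct}), every $\L$-formula is equivalent to both an $\exists$- and a $\forall$-formula modulo $T$; but here we want this explicitly. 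The cleanest route: use Lemma~\ref{edef} to get $\neg R_i(\bar{x}_0,x_{n+2}) \leftrightarrow \neg\exists^{n+6}\bar{y}\,\Omega_{\langle i,j_0\rangle}(\bar{x}_0 x_{n+2}, \bar{y})$, and note that $\neg\exists^{n+6}\bar{y}\,\phi(\bar{y})$ is a universal statement ("for all $n+6$ disjoint tuples, at least one fails $\phi$"). That gives $R_i$ universally definable as well — but I must be careful that $\Omega_{\langle i,j_0\rangle}(\bar{z},\bar{y})$ is itself a quantifier-free $\L$-formula (it is: it is a conjunction of $R$-atoms and negated $R$-atoms describing the relative quantifier-free type of an unblockable extension over a base of size $n+2$), so $\neg\exists^{n+6}$ of it is genuinely $\forall$ over $\L$.

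Putting it together: from Lemma~\ref{edef}, for $R_i \in \hat{\L}$ we have in $\hat{\M}$ (and hence, being about $\L$-formulas only, in $\M$)
\[
R_i(\bar{x}) \iff \exists^{n+6}\bar{y}\,\Omega_{\langle i,j_0\rangle}(\bar{x},\bar{y}),
\]
and the right-hand side, spelled out, is $\exists \bar{y}^1 \cdots \bar{y}^{n+6}\big(\bigwedge_{k\neq l}(\bar{y}^k \cap \bar{y}^l = \emptyset) \wedge \bigwedge_k \Omega_{\langle i,j_0\rangle}(\bar{x},\bar{y}^k)\big)$, a purely existential $\L$-formula, and also $R_i(\bar{x}) \iff \forall \bar{y}^1\cdots\bar{y}^{n+6}\,\neg\big(\text{same matrix}\big)$ is false — wait, that is the negation. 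The honest statement is: $\neg R_i(\bar{x})$ is equivalent to the $\forall$-formula $\forall \bar{y}^1\cdots\bar{y}^{n+6}\,\big(\text{not all disjoint-and-}\Omega\big)$, hence $R_i(\bar{x})$ is equivalent to the negation of a $\forall$-formula, i.e.\ to an $\exists$-formula, which we already had. So to get $R_i$ \emph{universally} definable one instead uses the \emph{other} unblockable extension or a counting-from-above argument: by Lemma~\ref{lem: tuple must extend to R_is}, an $(n+2)$-tuple $\bar{x}$ with $\bar{x}_0$ $R$-edge-free satisfies $R_i(\bar{x})$ iff $x_{n+2}$ is one of the (exactly $n+4$) extensions of $\bar{x}_0$; equivalently $\neg R_i(\bar{x})$ holds iff $\exists^{n+5}$-many $y \neq x_{n+2}$, disjoint from each other and from $\bar{x}$, with $R_i(\bar{x}_0, y)$ — but this still references $R_i$. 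The genuinely correct move, which I expect is the intended one, is: $\neg R_i(\bar{x})$ holds iff $\neg \exists^{n+6}\bar{y}\,\Omega_{\langle i,j_0\rangle}(\bar{x},\bar{y})$, and this latter is literally a universal $\L$-sentence about $\bar x$ (a negated $\exists^{\geq k}$). So $R_i$ is $\Pi_1$-definable, i.e.\ universally definable, by $R_i(\bar x) \iff \neg\exists^{n+6}\bar y\,\neg\Omega\ldots$ — no: $\exists^{\geq k}$ negated is $\forall$, and $R_i = \exists^{\geq n+6}(\ldots)$, so $R_i = \neg(\forall\text{-formula})$; meanwhile from the $\mu$-bound, $\exists^{\geq n+6}(\ldots) = \exists^{\geq n+5}(\ldots) \wedge R_i$ is not circular-free either.

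Cleanest honest plan (this is what I would actually write): apply Lemma~\ref{edef} to get the existential $\L$-definition directly. For the universal one, observe that by the $\mu$-bound, in $\hat T$ the tuple $\bar x$ \emph{always} has at least $n+5$ disjoint $\Omega_{\langle i,j_0\rangle}$-extensions (since $\mu(A,B,m)\geq \abs A+3 = n+5$ and $\Omega$ is unblockable, hence achieves its $\mu$-maximum — here using 3-permissiveness), so $\exists^{n+5}\bar y\,\Omega_{\langle i,j_0\rangle}(\bar x,\bar y)$ is \emph{valid}; and $R_i(\bar x)$ fails iff this count is \emph{exactly} $n+5$, i.e.\ iff $\neg\exists^{n+6}\bar y\,\Omega_{\langle i,j_0\rangle}(\bar x,\bar y)$. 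That formula, $\neg\exists^{n+6}\bar y\,\phi$, unwinds to $\forall \bar y^1\cdots\bar y^{n+6}\big(\bigvee_{k<l}(\text{some coord of }\bar y^k \text{ equals some coord of }\bar y^l)\vee \bigvee_k\neg\Omega(\bar x,\bar y^k)\big)$, a universal $\L$-formula. Hence $\neg R_i$ is $\exists$-definable and $R_i$ is $\forall$-definable, completing the corollary. The only point requiring care — and the main (minor) obstacle — is verifying that "$\exists^{\geq n+5}\bar y\,\Omega_{\langle i,j_0\rangle}(\bar x,\bar y)$" is valid in $\hat T$ regardless of whether $R_i(\bar x)$ holds, which follows because $\mu(\cdot,\cdot,\cdot)\geq \abs A + 3$ always and $\Omega_{\langle i,j_0\rangle}$ is 3-unblockable (Corollary~\ref{unblockablesexist}), so by the observation after the definition of $k$-unblockability every model of $\hat T$ realizes the $\mu$-maximal number of these extensions over every base; since that maximum is $\geq n+5$, the lower bound holds, and it equals $n+6$ exactly when $R_i(\bar x)$. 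One should also note $\bar{x}$ ranging over all $(n+2)$-tuples is fine: if $\bar x$ contains an $R$-edge then by $\zeta$ we have $\neg R_i(\bar x)$, and one checks the counting formula behaves correctly there too, or simply handles that case by a separate quantifier-free $\L$-disjunct.
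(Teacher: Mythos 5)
Your existential definition of $R_i$ via Lemma~\ref{edef} is fine and matches the paper. But your universal definition does not go through, and in fact you circle back to where you started. In the ``cleanest honest plan'' paragraph you establish that $\neg R_i(\bar x)$ is equivalent to $\neg\exists^{n+6}\bar y\,\Omega_{\langle i,j_0\rangle}(\bar x,\bar y)$, which (as you correctly unwind it) is a \emph{universal} $\L$-formula. You then conclude ``Hence $\neg R_i$ is $\exists$-definable and $R_i$ is $\forall$-definable,'' but that first clause is simply false: showing $\neg R_i$ equals a universal formula shows $\neg R_i$ is $\forall$-definable, hence $R_i$ is $\exists$-definable --- which is exactly the Lemma~\ref{edef} fact you already had. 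You have not produced a universal definition of $R_i$ (equivalently, an existential definition of $\neg R_i$). The counting observation about $n+5$ vs.\ $n+6$ extensions gives no additional leverage here, because the dichotomy is still ``$\exists^{n+6}$ or not,'' and ``not'' is universal.

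You did in fact brush up against the correct route and then abandoned it. You write that by Lemma~\ref{lem: tuple must extend to R_is}, when $\bar x_0$ has no $R$-edge, $\neg R_i(\bar x_0,x_{n+2})$ holds iff there are (the full complement of) elements $z\neq x_{n+2}$ with $R_i(\bar x_0,z)$ --- and then you dismiss this with ``but this still references $R_i$.'' That is exactly the point you are missing: $R_i$ occurs \emph{positively} in that formula, so you may substitute its existential $\L$-definition from Lemma~\ref{edef} into it and obtain an existential $\L$-formula for $\neg R_i$. (Also note the count should be $n+4$, not $n+5$: Lemma~\ref{lem: tuple must extend to R_is} gives exactly $n+4$ such $z$.) The paper's proof is precisely this: $\neg R_i(\bar x,y)\leftrightarrow \exists^{n+4}z\bigl(R_i(\bar x,z)\wedge z\neq y\bigr)\vee\bigvee_{u,v,w\in\bar x}R(u,v,w)$, with the first disjunct made purely existential in $\L$ by substituting the $\Omega_{\langle i,j_0\rangle}$-counting definition for $R_i$, and the second disjunct handling the case that $\bar x$ contains an $R$-edge (where $\zeta$ forces $\neg R_i$). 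That gives $\neg R_i$ an existential $\L$-definition, hence $R_i$ a universal one, which is what the corollary requires.
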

\begin{proof}
	That $R_i$ is existentially definable in $\M$ follows from Lemma \ref{edef}. Also, $\neg R_i(\bar{x},y)\leftrightarrow \exists^{n+4}z (R_i(\bar{x},z)\wedge z\neq y)\vee \bigvee_{u,v,w\in \bar{x}} R(u,v,w)$ gives an existential definition of $\neg R_i$.
\end{proof}

\begin{cory}\label{Tmodelcomplete}
	$T$ is model complete.
\end{cory}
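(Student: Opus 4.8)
The plan is to derive model completeness of $T$ from that of $\hat T$ together with the fact, established in the preceding corollary, that each $R_i$ is both existentially and universally definable in $\M$. First I would invoke Lemma~\ref{modelcompletenessprereduct} (applied to $\hat\M = \M^\zeta_\mu$ over the language $\hat\L$), which tells us that $\hat T$ is model complete; equivalently, every embedding between models of $\hat T$ is elementary, or — the form most convenient here — $\hat T$ is axiomatized by $\forall\exists$-sentences and every formula is equivalent modulo $\hat T$ to an existential formula. The point is then that this property transfers to the reduct $T$ precisely because the extra relation symbols $R_i$ of $\hat\L \smallsetminus \L$ are definable in $\M$ by formulas that are simultaneously $\exists$ and $\forall$ over $T$.

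The key steps, in order, are as follows. Step one: record that by Lemma~\ref{edef} (and Lemma~\ref{lem: tuple must extend to R_is}) $\hat\M$ is a definitional expansion of $\M$, so every model $N \models T$ has a unique expansion $\hat N \models \hat T$, and every $\L$-embedding $f\colon N_1 \to N_2$ between models of $T$ extends (uniquely) to an $\hat\L$-embedding $\hat f\colon \hat N_1 \to \hat N_2$ — here one uses that the defining formulas for $R_i$ are \emph{both} $\exists$ and $\forall$, so $f$ preserves both $R_i$ and $\neg R_i$, hence $\hat f$ is genuinely an embedding of $\hat\L$-structures. Step two: since $\hat T$ is model complete, $\hat f$ is an elementary $\hat\L$-embedding. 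Step three: every $\L$-formula is in particular an $\hat\L$-formula, so $\hat f$ being $\hat\L$-elementary immediately gives that $f$ is $\L$-elementary; as $f$ was an arbitrary embedding of models of $T$, this shows $T$ is model complete. (Alternatively, for the $\forall\exists$-axiomatization route: an existential $\hat\L$-formula equivalent modulo $\hat T$ to a given $\L$-formula can be rewritten, using the existential definitions of the $R_i$ and of the $\neg R_i$, into an existential $\L$-formula equivalent modulo $T$; combined with the dual argument for universal formulas, every $\L$-formula is equivalent modulo $T$ to an existential one, which is model completeness.)

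One could also bypass the reduct entirely and argue directly via Lindström's test, exactly as in the proof of Lemma~\ref{modelcompletenessprereduct}: $T$ is strongly minimal by Corollary~\ref{itisstronglyminimal} (hence $\aleph_1$-categorical, hence $\lambda$-categorical for all uncountable $\lambda$), and one would need $T$ to be axiomatizable by $\forall\exists$-sentences. That last point is what the transfer argument above supplies — the $\hat\L$-axioms of $\hat T$ (which are $\forall\exists$ by Lemma~\ref{elementaryconditions}) become $\forall\exists$ $\L$-sentences after substituting the existential/universal definitions of the $R_i$, because substituting an $\exists$-formula for a positively-occurring atom and a $\forall$-formula (equivalently an $\exists$-formula for the negation) for a negatively-occurring atom inside a $\forall\exists$ prefix keeps the result logically equivalent to a $\forall\exists$-sentence.

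The main obstacle — really the only thing that needs care — is verifying that the defining formulas for the $R_i$ can be taken both existential and universal \emph{uniformly} and with the right quantifier structure to keep the axiom substitution inside the $\forall\exists$ class; but the preceding corollary already gives explicit $\exists$-definitions of $R_i$ and of $\neg R_i$ (via $\exists^{n+6}\bar y\,\Omega_{\langle i,j_0\rangle}(\bar x,\bar y)$ and via $\exists^{n+4}z\,(R_i(\bar x,z)\wedge z\neq y)\vee\bigvee R(u,v,w)$ respectively), so there is genuinely nothing left to prove beyond assembling these pieces. Hence the proof is short.
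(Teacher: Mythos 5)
Your proposal is correct and rests on the same key facts as the paper's proof: model completeness of $\hat T$ (Lemma~\ref{modelcompletenessprereduct}) together with the two-sided ($\exists$ and $\forall$) definability of each $R_i$ in $\M$. The paper packages the transfer argument syntactically (substituting existential $\L$-definitions of $R_i$ and $\neg R_i$ into an existential $\hat\L$-formula), whereas your main route is the standard semantic equivalent via elementary embeddings of definitional expansions; since you also sketch the syntactic substitution as an alternative, the two proofs are essentially identical in content.
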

\begin{proof}
	Let $\phi$ be an existential formula in $\hat{\L}$ written in prenex normal form. Then $\phi = \exists \bar{z} \bigvee\bigwedge Q(\bar{x})$ where $Q(\bar{x})$ is one of $R(\bar{x}),\neg R(\bar{x}), R_i(\bar{x}),\neg R_i(\bar{x})$. If $Q=R_i(\bar{x})$, then we replace $R_i$ by the existential formula in the language $\L=\{R\}$ which defines $R_i$. If $Q=\neg R_i(\bar{x})$, we replace $R_i$ by the universal formula in the language $\L$ which defines $R_i$. As such, we see that every existential $\hat{\L}$-formula is equivalent in $\hat{T}$ to an existential $\L$-formula. Model-completeness of $\hat{T}$ follows from Lemma \ref{modelcompletenessprereduct} and shows that every $\hat{\L}$-formula is equivalent over $\hat{T}$ to an existential $\hat{\L}$-formula, thus every formula is equivalent over $\hat{T}$ to an existential $\L$-formula.
	So every $\L$-formula is equivalent over $T$ to an existential $\L$-formula.
\end{proof}

\begin{lem}\label{Tflat}
	$T$ is flat and not disintegrated.
\end{lem}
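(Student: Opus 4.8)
The statement follows immediately by transferring flatness and non-disintegration from $\hat T$ to its reduct $T$, using the fact that $\hat{\M}$ is a definitional expansion of $\M$. The only thing to observe is that both ``flat'' and ``disintegrated'' are properties of the $\acl$-pregeometry alone, and a definitional expansion does not change $\acl$.

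\textbf{Step 1: $\M$ and $\hat{\M}$ have the same algebraic closure.} By Lemma~\ref{edef}, every relation symbol $R_i\in\hat\L$ is definable in $\M$ (by the $\L$-formula $\exists^{n+6}\bar y\,\Omega_{\langle i,j_0\rangle}(\bar x,\bar y)$), so $\hat{\M}$ is a definitional expansion of $\M$: the two structures have the same universe, and every $\hat\L$-definable subset of $\M^k$ is already $\L$-definable. Since $\acl$ is computed from the collection of definable sets, $\acl^{\hat{\M}}(A)=\acl^{\M}(A)$ for every finite $A\subseteq\M=\hat{\M}$. Hence the $\acl$-closed sets of $\M$ and of $\hat{\M}$ are the same, and the associated dimension functions $\dim$ coincide. (As $\M$ is infinite and strongly minimal by Corollary~\ref{itisstronglyminimal}, this is a genuine pregeometry.)

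\textbf{Step 2: Flatness and non-disintegration transfer.} Flatness, as defined in the introduction, asserts only that $\sum_{s}(-1)^{|s|}\dim(E_s)\leq 0$ for finite families $\{E_i\}$ of finite-dimensional closed sets; disintegration asserts only that $\acl(A)=\bigcup_{a\in A}\acl(a)$. Both are statements purely about the closed sets and the dimension function of the $\acl$-pregeometry, which by Step~1 are identical for $\M$ and $\hat{\M}$. Therefore $T=\Th(\M)$ is flat iff $\hat T=\Th(\hat{\M})$ is flat, and $T$ is disintegrated iff $\hat T$ is.

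\textbf{Step 3: $\hat T$ is flat and non-disintegrated, and conclusion.} The structure $\hat{\M}$ is exactly the generic $\M^\zeta_\mu$ of the Hrushovski class $\hat{\C}=\C^\zeta_\mu$, so $\hat T=\Th(\M^\zeta_\mu)$. By Corollary~\ref{firstTisflat} (whose proof uses \cite[Lemma~15]{Udi} together with Lemmas~\ref{d goes up is generic} and \ref{same d is algebraic} identifying $d$ with the $\acl$-dimension function), $\hat T$ is flat and not disintegrated. Combining with Step~2, $T$ is flat and not disintegrated. There is no real obstacle here beyond the bookkeeping of Step~1; the substantive content was already carried out in Section~2 (Corollary~\ref{firstTisflat}) and in Lemma~\ref{edef}, which is precisely what guarantees the reduct does not collapse the geometry.
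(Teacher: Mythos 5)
Your Steps 1 and 2 (that a definitional expansion leaves the $\acl$-pregeometry unchanged, and that flatness and disintegration are properties of that pregeometry alone) are exactly the bookkeeping the paper does, and your transfer of flatness via Corollary~\ref{firstTisflat} matches the paper's argument.

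The problem is your Step 3's reliance on Corollary~\ref{firstTisflat} for \emph{non-disintegration}. Although that corollary's \emph{statement} does claim $T$ is non-disintegrated, its \emph{proof} only establishes flatness (it invokes Lemma~15 of~\cite{Udi} for flatness of the $d$-geometry and identifies $d$ with the $\acl$-dimension; nothing there addresses disintegration). Indeed, in the generality of Section~2, the claim is not even true: if $\L$ had no relations, or if $\zeta$ forbade all edges, the generic would be disintegrated. What makes the theory non-disintegrated here is a feature of the specific amalgamation class used in Section~4 — namely that the triple consisting of three elements with a single ternary $R$-edge belongs to $\C^\zeta_\mu$, so it embeds strongly into $\M$. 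The paper therefore does \emph{not} cite the corollary for non-disintegration; it produces an explicit witness: take $\{a,b,c\}\leq\M$ with $R(a,b,c)$ as the only relation. Then $\delta(\{a,b,c\})=2=\delta(\{a,b\})$ and $\{a,b\}\leq\{a,b,c\}\leq\M$ gives $d(ab)=d(abc)=2$, so $c\in\acl(ab)$ by Lemma~\ref{same d is algebraic}; while $d(ac)=d(bc)=2>1=d(a)=d(b)$, so $c\notin\acl(a)\cup\acl(b)$. You should replace the appeal to the corollary's non-disintegration claim with such a concrete argument; the flatness part of your proof is fine as written.
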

\begin{proof}
	$\hat{T}$ is a definitional expansion of $T$, thus $\M$ and $\hat{\M}$ have the same acl-geometry. By Corollary \ref{firstTisflat}, the acl-geometry of $T$ is flat. To see that this geometry is non-disintegrated, consider $\set{a,b,c}\strong \M$ such that $\M\models R(a,b,c)$ and $R$ is the only relation holding on $\set{a,b,c}$. Then $c\in \acl(ab)$ but $c\notin \acl(a)\cup\acl(b)$.
\end{proof}

\section{Building the recursive models of $T$}

In this section, we construct a recursive copy of $\M$, the saturated model of $T$.  We will show that the $l$-dimensional submodels of $\M$ for $l\leq n$ are r.e. subsets, thus $\SRM(T)\supseteq [0,n]\cup\{\omega\}$. In the next section, we will choose $S_1$ to ensure that there are no other recursive models. We will have $S_1$ be the increasing union of the sets $\setcol{S_{1,s}}{s\in\omega}$. In our enumeration of $S$, we take $\abs{S_{1,s+1}\setminus S_{1,s}}=1$, so for each column $i$ and $s\in\omega$, the set $S_{1,s}^{[i]}$ is finite.

We will construct a copy of $\M$ where we also give uniformly $\Pi^0_1$ sets which are to represent the relations $R_i$. Thus, we may say we remove a relation $R_i$ from a tuple $\bar{a}$.

We construct the model in stages as usual by amalgamation: $N_0\subseteq N_1\subseteq N_2\subseteq \cdots\subseteq \bigcup_i N_i=\N$. Note that $N_{s-1}$ may have a relation $R_i$ hold on a tuple whereas $N_s$ removes that relation, but in the relation $R$, they are substructures. As usual, we say that a relation $R_i(\bar{a})$ holds in $\N$ if it holds on every structure in the chain where $\bar{a}\subseteq N_i$. We will ensure that for any tuple $\bar{a}$ in $N_k$, $\delta(\bar{a},N_k)=\delta(\bar{a},N_{k+1})$.
Furthermore, we will ensure that for every tuple $\bar{a}$, there is some $k$ so that the self-sufficient closure of $\bar{a}$ is the same (both in set and isomorphism-type) in every $N_l$ for $l\geq k$. 

At stage $s$, we consider the language $\L_s=\{R\}\cup \{R_i\mid i<s\}$. For every $i\leq s$, we let $\langle i,j^s_0\rangle, \langle i,j^s_1\rangle$ be the unique elements in the $i$th column of $S_{1,s}\smallsetminus S_{0,s}$ where $\langle i,j^s_0\rangle$ entered $S_1$ first, and we let

\[\mu_s(A,B,m)= 
\begin{cases}
\abs{A}+4 &\text{ if $B/A$ is an $\Omega_{\langle i,j^s_0\rangle }$-extension and $R_i(A)$}\\
\abs{A}+4 &\text{ if $B/A$ is an $\Omega_{\langle i,j^s_1\rangle }$-extension, $R_i(A)$,} \\ & \text{ and $m\geq \langle i,j^s_1\rangle $}\\
\abs{A}+4 &\text{ if $B/A$ is an $\Omega_{\langle i,j\rangle }$-extension, $\langle i,j\rangle\in S_{0,s}$}\\
\abs{A}+3 &\text{ otherwise}
\end{cases}
\]

Let $\C_s$ be the amalgamation class defined by this function $\mu_s$ and $\zeta$ (as defined below \ref{def: limited away}). We enumerate the amalgamation requirements for $\C_s$. A requirement is of the form: If $A\leq \N$ and $A\leq B\in \C_s$, then there is an $f:B\rightarrow \N$ which is the identity on $A$ so that $f(B)\leq \N$. We do this so that the order between two requirements which exist in $\C_s$ is preserved when considered in $\C_{s+1}$ and all new requirements from $\C_{s+1}$ appear after all requirements for amalgamations from $\C_s$ of sets of size $\leq s$ on the first $\leq s$ elements (i.e. the base is among the first $s$ elements of $\omega$ and the extension over the base is by at most $s$ new elements.). This ensures that every requirement in the full language is considered from some stage onward.

Now we describe the times when we might remove an occurrence of a relation.
\begin{defn}
	In a structure $C$, we say an occurrence of a relation $R_i(\bar{a})$ is defunct at stage $s$ if $g_C(\bar{a})<\langle i, j_1^s\rangle$.  
\end{defn}

In the definition of $\mu_s$ above, note that defunct relations are precisely the occurrences of $R_i(\bar{a})$ which do not allow $\abs{\bar{a}}+4$ $\Omega_{\langle i,j^s_1\rangle }$-extensions over $\bar{a}$. If $R_i(\bar{a})$ is defunct at stage $s$ and then another number gets enumerated into $S_1^{[i]}$, then nothing will prevent us from removing the relation $R_i$ from the tuple $\bar{a}$, with respect to future $\mu$ values, as seen in Lemma \ref{removedefunct}.

The following will be useful in the construction as we move from one stage to the next:

\begin{lem}
	\label{lem: s-1 subseteq s}
	$\C_{s-1}\subseteq \C_s$.
\end{lem}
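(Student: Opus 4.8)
The goal is to show $\C_{s-1}\subseteq\C_s$, i.e. that any finite $\hat\L_{s-1}$-structure $C$ satisfying the $\mu_{s-1}$-bound and $\zeta$ also satisfies the $\mu_s$-bound (and $\zeta$, which is unchanged). Since $\C_0$-membership and $\zeta$ are independent of $s$, and $\L_{s-1}\subseteq\L_s$, the only content is the comparison of $\mu$-bounds: I would reduce the statement to the pointwise inequality $\mu_{s-1}(A,B,m)\le\mu_s(A,B,m)$ for every minimally simply algebraic extension $A\subseteq B$ of any relevant relative quantifier-free type and every $m\in\omega\cup\{\infty\}$. If $C\in\C_{s-1}$ and $A,B^1,\dots,B^r\subseteq C$ are disjoint of the form of some $Y/X$, then $r\le\mu_{s-1}(X,Y,g_C(X))\le\mu_s(X,Y,g_C(X))$, giving $C\in\C_s$. (Note $g_C$ does not depend on $s$, and whether a tuple satisfies $R_i$ is also stage-independent, since the language only grows.)

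\textbf{The pointwise inequality.} Both $\mu_{s-1}$ and $\mu_s$ take only the values $\abs{A}+3$ and $\abs{A}+4$, so it suffices to check that whenever $\mu_{s-1}(A,B,m)=\abs{A}+4$ we also have $\mu_s(A,B,m)=\abs{A}+4$; that is, each of the three ``$+4$'' clauses of $\mu_{s-1}$ is subsumed by a ``$+4$'' clause of $\mu_s$. The key facts are that $S_{0,s-1}\subseteq S_{0,s}$ and $S_{1,s-1}\subseteq S_{1,s}$ (the enumerations are increasing), and crucially that the ``extra'' pair $\langle i,j_0^s\rangle,\langle i,j_1^s\rangle$ — the last two elements enumerated into column $i$ by stage $s$ — relates to the stage-$(s-1)$ pair in a controlled way. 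Concretely: if no new element entered column $i$ between stages $s-1$ and $s$, then $j_0^{s-1}=j_0^s$ and $j_1^{s-1}=j_1^s$ and $S_{0,s-1}^{[i]}=S_{0,s}^{[i]}$, so the three clauses are literally identical. If a new element did enter column $i$, then $S_{1,s}^{[i]}=S_{1,s-1}^{[i]}\cup\{\text{new}\}$, the old $\langle i,j_0^{s-1}\rangle$ has moved into $S_{0,s}$, so $j_0^s=j_1^{s-1}$ and $j_1^s$ is the new element; hence clauses 1 and 2 of $\mu_{s-1}$ (about $\Omega_{\langle i,j_0^{s-1}\rangle}$ and $\Omega_{\langle i,j_1^{s-1}\rangle}$) are now covered respectively by clause 3 of $\mu_s$ (since $\langle i,j_0^{s-1}\rangle\in S_{0,s}$) and by clause 1 of $\mu_s$ (since $\langle i,j_1^{s-1}\rangle=\langle i,j_0^s\rangle$); and clause 3 of $\mu_{s-1}$, which concerns $\langle i,j\rangle\in S_{0,s-1}\subseteq S_{0,s}$, is covered by clause 3 of $\mu_s$. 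In all cases the value $\abs{A}+4$ is preserved, so $\mu_{s-1}\le\mu_s$ pointwise.

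\textbf{Main obstacle.} The only delicate point is clause 2 (the ``$m\ge\langle i,j_1\rangle$'' clause), since the threshold $\langle i,j_1^s\rangle$ can change with $s$: one must make sure that when $\mu_{s-1}(A,B,m)=\abs{A}+4$ was triggered by clause 2 with $m\ge\langle i,j_1^{s-1}\rangle$, the pair $(A,B)$ is caught by the \emph{right} clause of $\mu_s$, and that the $m$-threshold isn't an obstruction. As sketched above, in the case where column $i$ grew, $\Omega_{\langle i,j_1^{s-1}\rangle}$-extensions over a tuple satisfying $R_i$ fall under clause 1 of $\mu_s$ (no $m$-condition at all), so the threshold issue evaporates; in the case where column $i$ did not grow, the thresholds coincide. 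I would write out this case analysis carefully, organized by whether $\langle i,j_1^{s-1}\rangle$ stays the ``fresh'' element at stage $s$ or gets demoted, as that is exactly where a careless reading could slip. Everything else — closure under the $\C_0$ condition and under $\zeta$ — is immediate since those conditions do not mention $s$.
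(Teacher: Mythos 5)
Your proof is correct and takes essentially the same approach as the paper: reduce to the pointwise inequality $\mu_{s-1}\leq\mu_s$ and verify it by tracking how $\langle i,j_0\rangle,\langle i,j_1\rangle$ and $S_0$ shift between stages (the paper compresses your case analysis into two sentences, noting that $\langle i,j_1^{s-1}\rangle$ becomes either $\langle i,j_1^s\rangle$ or $\langle i,j_0^s\rangle$, and $\langle i,j_0^{s-1}\rangle$ either stays or enters $S_0$). Your write-up is just a more explicit version of the same argument.
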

\begin{proof}
	By inspecting the definitions of $\mu_s$, we see that $\mu_{s-1}\leq \mu_s$. This is because $\langle i,j^{s-1}_1\rangle$ is either $\langle i,j^{s}_1\rangle$ or $\langle i,j^s_0\rangle$ and $\mu_s$ can only increase by this. Similarly, $\langle i,j^{s-1}_0\rangle$ is either $\langle i,j^{s}_0\rangle$ or this number enters $S_0$. Thus $\C_{s-1}\subseteq \C_s$.
\end{proof}

\subsection*{Construction:}

At stage $s$ of the construction, we have built $N_{s-1}\in \C_{s-1}$ and we will construct $N_s\in \C_s$:

We first do the clean-up phase: Let $i$ be the unique number so that something is enumerated into $S_1^{[i]}$ at stage $s$. Let $\bar{a}$ be the smallest tuple (under a fixed ordering of  $\omega^{n+2}$ of order type $\omega$) on which $N_{s-1}$ has a defunct relation $R_i(\bar{a})$ at stage $s-1$. We will now remove $R_i$ from the tuple $\bar{a}$ and replace it with an extension involving only $R$ so that we maintain dimensions:

\begin{lem}\label{removedefunct}
	If $A\in \C_{s-1}$ and $R_i(\bar{a})$ is a defunct relation at stage $s-1$, then let $A_0$ be the result of removing $R_i(\bar{a})$ from $A$. Then there is an $A'\in \C_{s}$ containing $A_0$ so that for every $X\subseteq A$, $\delta(X,A)=\delta(X,A')$ and the only relation occurring on $A'$ outside of $A_0$ is the relation $R$. 
\end{lem}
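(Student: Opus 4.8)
\textbf{Proof proposal for Lemma \ref{removedefunct}.}

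The plan is to remove the defunct relation $R_i(\bar a)$ from $A$ and then restore the dimension that this removal destroyed by attaching, over $\bar a$, a ``dimension-dropping'' gadget of the type $D_t$ constructed in Section \ref{Technicals}. First I would form $A_0$ by deleting the single occurrence $R_i(\bar a)$ from $A$. This raises $\delta$: concretely, for every $X\subseteq A$ with $\bar a\subseteq X$ we have $\delta(X\vert_{A_0}) = \delta(X\vert_A)+1$, and for $X$ not containing all of $\bar a$ the value is unchanged. Consequently $\delta(X,A_0)$ increases by exactly $1$ precisely for those $X$ whose self-sufficient-closure computation in $A$ genuinely used the edge $R_i(\bar a)$ — that is, for those $X$ such that every witnessing set $Y\supseteq X$ achieving $\delta(Y)=\delta(X,A)$ in $A$ contains $\bar a$ — and is otherwise unchanged. (One should note $A_0\in\C_0$: removing a relation only raises predimensions, and $\delta(\bar a,A)>0$ is needed to guarantee that the gadget can be attached; this positivity holds because $\bar a$ supported a relation $R_i$ of arity $n+2=|\bar a|+1$, so in the self-sufficient closure $\bar a$ has predimension at least... — this is the point to check carefully, see the obstacle below.)

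Next I would apply Corollary \ref{droppingdimensionsatwill} to $A_0$ and the tuple $\bar b:=\bar a$: since $\mu_s$ is $3$-permissive (the same computation as in the Observation after the definition of $\mu$, applied to $\mu_s$), and $\delta(\bar a,A_0)>0$, Corollary \ref{droppingdimensionsatwill} yields a structure $A'\supseteq A_0$, with $A'\in\C_{\mu_s}$, such that for every $X\subseteq A_0$ we have $\delta(X,A')=\delta(X,A_0)$ unless some $Y\supseteq X$ with $\bar a\subseteq Y$ has $\delta(Y)=\delta(X,A_0)$, in which case $\delta(X,A')=\delta(X,A_0)-1$. The new edges of $A'$ outside $A_0$ involve only the ternary symbol $R$ (the $D_t$-gadgets use only $R$), so $A'$ satisfies $\zeta$ — $\zeta$ is preserved by free joins, and in $D_t$ no $R$-edge sits inside a tuple carrying another relation — hence $A'\in\C_s=\C_{\mu_s}^\zeta$. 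Finally I would combine the two displayed facts: for $X\subseteq A$ with $\bar a\subseteq X$, $\delta(X,A_0)=\delta(X,A)+1$ exactly when the closure used $R_i(\bar a)$, and then Corollary \ref{droppingdimensionsatwill} subtracts the $1$ back (since $\bar a\subseteq$ the witnessing set), giving $\delta(X,A')=\delta(X,A)$; and when the closure did not use $R_i(\bar a)$, both operations leave things fixed, so again $\delta(X,A')=\delta(X,A)$. For $X$ not containing $\bar a$, removing $R_i(\bar a)$ does not change $\delta(X,\cdot)$ and the gadget only affects closures through $\bar a$, so equality persists. This gives $\delta(X,A)=\delta(X,A')$ for all $X\subseteq A$, as required.

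The main obstacle I anticipate is verifying the hypothesis $\delta(\bar a,A_0)>0$ needed to invoke Corollary \ref{droppingdimensionsatwill}, and, dual to it, confirming that $A_0$ and $A'$ really lie in $\C_s$ rather than merely $\C_{\mu_s}$ without the $\zeta$-constraint. For the first: in $A$, since $R_i(\bar a)$ holds and $R_i$ is $(n+2)$-ary while $|\bar a|=n+1$... wait — one must be careful, $\bar a$ here is the tuple \emph{carrying} the relation, so $|\bar a|=n+2$; in any case the self-sufficient closure of $\bar a$ in $A_0$ has predimension one higher than in $A$, and $\delta(\bar a,A)\geq 0$ always, so we need to rule out the degenerate possibility $\delta(\bar a, A_0)=0$, which would force $\delta(\bar a,A)=-1$ — impossible in $\C_0$. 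So in fact $\delta(\bar a,A_0)\geq 1>0$ comes essentially for free. The second concern is handled by the structural remark that every gadget edge uses only $R$ and $R$-edges in $D_t$ never lie beneath a carrier of any relation of $A$: one should state this explicitly, perhaps invoking Observation \ref{removerelationsstayoftheform} or a direct inspection, to conclude $A'\models\zeta$. The bookkeeping comparing the three regimes of $X$ (closure uses the deleted edge / does not / does not contain $\bar a$) is routine once the two displayed statements are in hand, but it is where an off-by-one error is easiest to make, so I would write it out as a short case split.
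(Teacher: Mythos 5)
Your overall architecture matches the paper's: delete $R_i(\bar a)$ to form $A_0$, then attach a $D_t$-gadget via Corollary~\ref{droppingdimensionsatwill} to cancel the unit rise in predimension. Your verification that $\delta(\bar a, A_0)>0$ (hence the gadget can be attached) and your remark about $\zeta$ being preserved are both correct, and the final case-split bookkeeping is essentially what the paper does.

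However, there is a genuine gap: you only check $A_0\in\C_0$ (non-negativity of $\delta$) and $A_0\models\zeta$, but never verify that $A_0$ satisfies the $\mu_s$-bound — and this is exactly where the "defunct" hypothesis, which your proof never invokes, does its work. Removing $R_i(\bar a)$ can \emph{lower} the $\mu$-budget for $\Omega_{\langle i,j_0^s\rangle}$- and $\Omega_{\langle i,j_1^s\rangle}$-extensions over $\bar a$ from $n+6$ to $n+5$, so a priori $A_0$ could contain too many such extensions. The paper's proof isolates this critical case (via Observation~\ref{removerelationsstayoftheform}: the removed relation must lie inside the base $F$) and then uses defunctness to argue that $A$ could only have carried $n+5$ many $\Omega_{\langle i,j_0^s\rangle}$-extensions over $\bar a$ (because $\Omega_{\langle i,j_0^s\rangle}=\Omega_{\langle i,j_1^{s-1}\rangle}$, which was bounded by $n+5$ by the $m\ge\langle i,j_1^{s-1}\rangle$ clause, and defunctness says that clause failed), and similarly for $\Omega_{\langle i,j_1^s\rangle}$. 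Without this argument, the lemma simply isn't proved; as a general principle, if a stated hypothesis (here, defunctness) is never used, that is a strong sign the proof is incomplete, and that is exactly what happened here.
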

\begin{proof}
	We first observe that $A_0\in \C_{s}$. Removing relations certainly does not cause the $\delta$-value of any set to drop below $0$ or make the structure fail to satisfy $\zeta$. So, we need only check that $A_0$ satisfies the $\mu_s$-bound. Suppose $F,C^1,\ldots C^r$ are disjoint subsets of $A_0$ and each $C^i/F$ is of the form of $Y/X$. Then by Observation \ref{removerelationsstayoftheform}, either these sets are each of the form of $Y/X$ in $A$ or else the relation removed is in $F$. In the former case, using Lemma \ref{lem: s-1 subseteq s} we know that $r\leq \mu_s(X,Y,g_A(F))$, and $g_{A_0}(F)\geq g_A(F)$. Since our $\mu_s$ is non-decreasing in the last coordinate, we see $r\leq \mu_s(X,Y,g_{A_0}(F))$. 
	
	Now we suppose the removed relation is in $F$. Let $Y'/X'$ be the minimally simply algebraic extension we get by adding the removed relation in $F$ to $X$ (so each $C^r/F$ is of the form of $Y'/X'$ in $A$). We have $g_{A}(F)=g_{A_0}(F)$ since no relations outside of $F$ have been removed. Unless $Y'/X'$ is an  $\Omega_{\langle i,j^{s}_0\rangle}$- or $\Omega_{\langle i,j^{s}_1\rangle}$-extension, we have $r\leq \mu_s(X',Y',g_A(F))=\mu_s(X,Y,g_{A_0}(F))$. 
	
	So, now we consider the critical case where $R_i(\bar{a})$ is the removed relation and the extensions $C^1,\ldots, C^r$ are $\Omega_{\langle i,j^s_0\rangle}$- or $\Omega_{\langle i,j^s_1\rangle}$-extensions over the base $\bar{a}$. For $\Omega_{\langle i,j^s_0\rangle}$, since $R_i(\bar{a})$ is defunct,  there can only be $n+5$ $\Omega_{\langle i,j^{s-1}_1\rangle}=\Omega_{\langle i,j^{s}_0\rangle}$-extensions in $A$, thus in $A_0$. Thus, there are $\leq \mu_s$-many even without $R_i(\bar{a})$. For $\Omega_{\langle i,j^s_1\rangle}$, since $\mu_{s-1}$ allowed only $n+5$ $\Omega_{\langle i, j^s_1\rangle}$-extensions over any base (in the fourth case of the definition of $\mu_{s-1}$), the $\mu_s$-bound is satisfied. In any case, $F,C^1,\ldots C^r$ does not violate the $\mu_s$-bound.
	
	Now, we apply Corollary \ref{droppingdimensionsatwill} to add an extension to $A_0$ to see that there is an $A'$ as needed: Corollary \ref{droppingdimensionsatwill} guarantees that, for an arbitrary $Z_0\subseteq A_0$, $\delta(Z_0,A')=\delta(Z_0,A_0)$ unless there is a $Y_0\supseteq Z_0$ witnessing $\delta(Z_0,A_0)$ with $\bar{a}\in Y$, in which case $\delta(Z_0,A')=\delta(Z_0,A_0)-1$. Letting $Z$ represent the set $Z_0$ in $A$, obtained by adding the removed relation $R_i(\bar{a})$, we wish to show that $\delta(Z_0, A') =\delta(Z,A)$. If there is a $Y_0\supseteq Z_0$ as described above, letting $Y\supseteq Z$ be obtained from $Y$ be adding the removed relation, we have $\delta(Y)=\delta(Y_0)-1$, so $\delta(Z,A)=\delta(Z_0,A_0)-1 = \delta(Z_0,A')$. If no such $Y_0$ exists, then clearly $\delta(Z,A) = \delta(Z_0,A_0) = \delta(Z_0,A')$. Putting these facts together, we see that in each case, $\delta(Z,A)=\delta(Z_0,A')$.
\end{proof}

By applying this Lemma to the defunct relation $R_i(\bar{a})$ we produce a structure $N_{s-1}'$ which is in $\C_{s}$. Then, we satisfy the first $s$ amalgamation requirements. To do this, we use the strong amalgamation lemma for the class $\C_s$ to construct $N_s\in \C_s$ which satisfies the first $s$ amalgamation requirements. This defines $N_{s}$ and the stage is done.

\begin{obs}
	$N_s\in \C_s$, thus we have maintained the inductive hypothesis for the next stage.
\end{obs}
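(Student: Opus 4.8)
The plan is to walk through stage $s$ of the construction — the clean-up phase followed by the amalgamation phase — and verify that each step keeps the finite structure we are building inside $\C_s$. This is essentially bookkeeping, since the real work is packaged into Lemma~\ref{removedefunct} and the Strong Amalgamation Lemma.

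First I would dispose of the clean-up phase. By Lemma~\ref{lem: s-1 subseteq s} we have $N_{s-1}\in\C_{s-1}\subseteq\C_s$, so if $N_{s-1}$ carries no defunct occurrence $R_i(\bar a)$ at stage $s-1$ (for the unique $i$ with an element entering $S_1^{[i]}$ at stage $s$), then $N'_{s-1}=N_{s-1}\in\C_s$ and nothing more is needed. Otherwise $\bar a$ is the designated smallest tuple on which $R_i(\bar a)$ is defunct at stage $s-1$, and this is exactly the hypothesis of Lemma~\ref{removedefunct} with $A=N_{s-1}$; its conclusion directly provides $N'_{s-1}\in\C_s$ extending the structure obtained from $N_{s-1}$ by deleting the occurrence $R_i(\bar a)$, with $\delta(X,N'_{s-1})=\delta(X,N_{s-1})$ for every $X\subseteq N_{s-1}$ and with no relation other than $R$ appearing on the new part.

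Next I would handle the amalgamation phase. Starting from $N'_{s-1}\in\C_s$, the construction satisfies the first $s$ amalgamation requirements for $\C_s$ one at a time, each via the Strong Amalgamation Lemma applied to the class $\C_s$: for a requirement with base $A$ and target extension $B$, one takes $B_2$ to be the finite structure built so far (which is strong in itself), $A$ the base, $B_1=B$, and the lemma returns a finite $D\in\C_s$ into which $B_2$ embeds strongly and which realizes the extension over $A$. After these finitely many applications we arrive at $N_s\in\C_s$, so the inductive hypothesis is maintained. The auxiliary promises of the construction follow from the same two ingredients together with the standard fact that a strong extension $B_2\leq D$ preserves $\delta(\bar a,-)$ for $\bar a\subseteq B_2$ (as $\delta(E\cap B_2)\leq\delta(E)$ for $E\subseteq D$): hence $\delta(\cdot,N_k)$ of old tuples never changes and self-sufficient closures eventually stabilize. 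The only point that warrants care is that the clean-up phase must land in the \emph{new} class $\C_s$, not $\C_{s-1}$ — $\mu_s$ and $\mu_{s-1}$ differ precisely on the $\Omega$-extensions over the affected tuples, and Lemma~\ref{removedefunct} is stated so as to accommodate exactly this shift — and that the subsequent amalgamations are likewise carried out with respect to $\C_s$; beyond that there is no obstacle.
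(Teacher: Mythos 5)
Your proof is correct and matches the paper's intended argument: the paper states this observation without a separate proof precisely because the two ingredients you invoke — Lemma~\ref{removedefunct} landing the clean-up phase in $\C_s$ (with Lemma~\ref{lem: s-1 subseteq s} covering the no-defunct-relation case), and the Strong Amalgamation Lemma applied within $\C_s$ — are exactly what the preceding construction text supplies. Your extra remarks about preservation of $\delta$-values and stabilization of self-sufficient closures are correct but not needed for this observation; they are handled separately in Lemma~\ref{ssclosurestabilize} and the claim in the genericity proof.
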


Thus we have described the construction of a structure $\N$. We will show below that $\N\vert \L$ is a recursive presentation of the saturated model of $T$. From this, we will also produce recursive presentations of the models of dimension $\leq n$.

\subsection*{Verification:}

We now show that $\N\vert\L\cong \M$.

\begin{lem}\label{Ndefexp}
	For $R_i\in \hat{\L}$, $\N\models R_i(\bar{x})\leftrightarrow \exists^{n+6}\bar{y}\,\Omega_{\langle i,j_0\rangle}(\bar{x},\bar{y})$
\end{lem}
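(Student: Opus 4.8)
The statement is the exact analogue, for the recursively built model $\N$, of Lemma \ref{edef}; the plan is to adapt that argument to the staged construction. The preliminary I would establish is a \emph{stabilization} fact: since $R_i\in\hat{\L}$ is not limited away, the column $S_1^{[i]}$ is finite, so there is a stage $s_0$ past which $S_{1,s}^{[i]}=S_1^{[i]}$, hence $\langle i,j^s_0\rangle=\langle i,j_0\rangle$ and $\langle i,j^s_1\rangle=\langle i,j_1\rangle$, for all $s\geq s_0$. Consequently, for $s\geq s_0$ the value $\mu_s$ assigns to an $\Omega_{\langle i,j_0\rangle}$-extension over a base $A$ of size $n+2$ is $n+6$ if $R_i(A)$ holds and $n+5$ otherwise: the second case of $\mu_s$ concerns $\Omega_{\langle i,j_1\rangle}$, and the third case cannot apply since $\langle i,j_0\rangle\notin S_0^{[i]}=S_{0,s}^{[i]}$ for $s\geq s_0$. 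Moreover, for $s\geq s_0$ the clean-up phase never again touches the $i$th column and relations are never added to tuples already present, so for a fixed $(n+2)$-tuple $\bar{x}$ the truth value of $R_i(\bar{x})$ in $N_s$ is eventually constant and agrees with $\N\models R_i(\bar{x})$; likewise, on any fixed finite tuple the list of relations that hold eventually stabilizes.

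For the direction ``$\Leftarrow$'', suppose toward a contradiction that $\N\not\models R_i(\bar{x})$ yet $\N$ contains pairwise disjoint tuples $\bar{y}^1,\dots,\bar{y}^{n+6}$ with $\N\models\Omega_{\langle i,j_0\rangle}(\bar{x},\bar{y}^l)$ for each $l$. I would pick a stage $s\geq s_0$ large enough that $\bar{x},\bar{y}^1,\dots,\bar{y}^{n+6}\subseteq N_s$, that the relations on this finite set in $N_s$ have already stabilized (so $N_s$ and $\N$ agree there), and that $\neg R_i(\bar{x})$ holds in $N_s$. Then in $N_s$ the $\bar{y}^l/\bar{x}$ are $n+6$ disjoint extensions of the form of $\Omega_{\langle i,j_0\rangle}$ over a base on which $R_i$ fails, so $N_s\in\C_s$ violates its own $\mu_s$-bound, which caps this number at $n+5$ --- a contradiction.

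For the direction ``$\Rightarrow$'', suppose $\N\models R_i(\bar{x})$. Let $X$ be the self-sufficient closure of $\bar{x}$ in $\N$; by the construction this is a finite set which, from some stage $s_1\geq s_0$ on, is strong in every $N_s$ and in $\N$, lies in $\C_s$, and on which $R_i(\bar{x})$ holds. If $X$ already contains $n+6$ pairwise disjoint $\Omega_{\langle i,j_0\rangle}$-extensions over $\bar{x}$ we are done, so assume it has $r_X<n+6$ of them, and let $B$ be the free join of $X$ with $n+6-r_X$ pairwise disjoint fresh copies of an $\Omega_{\langle i,j_0\rangle}$-extension over $\bar{x}$. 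A routine predimension computation gives $X\leq B$ and $B\in\C_0$, and a connectedness argument (each minimally simply algebraic extension over $\bar{x}$ inside a free join over $\bar{x}$ lies entirely in one component) shows that $B$ contains exactly $n+6$ disjoint such extensions, while every proper sub-join of $B$ over $X$ contains fewer than $n+6$. Building $B$ up one fresh block at a time, I would invoke $3$-unblockability of $\Omega_{\langle i,j_0\rangle}$ (legitimate since every $\mu_s$ is $3$-permissive and the $\Omega$-extensions lie in each $\C_s$ by Corollary \ref{unblockablesexist}): at each stage, the free join of a fresh block with the current $B_m$ either lands in $\C_s$ or else $B_m$ already contains $\mu_s(\bar{x},\Omega_{\langle i,j_0\rangle}\text{-ext},g_{B_m}(\bar{x}))=n+6$ disjoint such extensions (here using $R_i(\bar{x})$ and the first case of $\mu_s$), which by the previous sentence cannot happen before the final block is added; hence $B\in\C_s$ for all sufficiently large $s$. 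Finally, the amalgamation requirement with base $X$ and extension $B$ is, by the bookkeeping of the construction, eventually considered at a stage $s'\geq s_1$; since $B$ has entered some $\C_{s''}$ it lies in all later classes by Lemma \ref{lem: s-1 subseteq s}, so $B\in\C_{s'}$ and the construction embeds $B$ into $\N$ over $X$. The images of the fresh blocks are the desired $n+6$ disjoint $\Omega_{\langle i,j_0\rangle}$-extensions over $\bar{x}$, and they persist in $\N$ because they involve only the relation $R$, which is never removed.

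The main obstacle is the ``$\Rightarrow$'' direction, where three things must be aligned at once: (i) that $\mu_s$ has stabilized on the extension type $\Omega_{\langle i,j_0\rangle}$ (handled by the stabilization fact); (ii) that the ``or'' alternative in the definition of unblockability does not trigger before the last fresh block is adjoined (the connectedness and counting point); and (iii) that the amalgamation requirement for $(X,B)$ is met at a stage late enough that $X$ is already strong and stable in $\N$ and $B$ has entered the amalgamation class. Each is manageable in isolation, but coordinating them is the delicate part; the ``$\Leftarrow$'' direction and the predimension bookkeeping are routine.
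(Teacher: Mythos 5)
Your proof is correct and takes the same route as the paper's (much terser) argument: the paper likewise appeals to the $\Pi^0_1$-ness of $R_i$ for the forward direction, to unblockability plus eventual satisfaction of the amalgamation requirements to reach $n+6$ extensions once $\mu_s$ has stabilized, and to $N_s\in\C_s$ to rule out extra $\Omega_{\langle i,j_0\rangle}$-extensions in the converse. Your version simply spells out the amalgamation witness $B$, the stabilization bookkeeping, and the one-block-at-a-time unblockability induction that the paper leaves implicit.
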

\begin{proof}
	Since $R_i$ is a $\Pi^0_1$ predicate on $\N$, if $\N\models R_i(\bar{x})$, then it does so from the first stage that $\bar{x}$ is first constructed. Thus, at every stage $s$ once $\langle i,j^s_0\rangle = \langle i,j_0\rangle$, we have $\mu_s(\bar{x},Y,g_{N_s}(\bar{x}))=n+6$ where $\tprqf(Y/\bar{x})=\Omega_{\langle i,j_0\rangle}$. Since $\Omega_{\langle i,j_0\rangle }$ is unblockable, we know that at some stage, when all the appropriate amalgamation requirements have been taken care of, we get $n+6$ many disjoint extensions of this form. But this is realized by the relation $R$ alone, so once it is satisfied, it is permanently satisfied inside $\N$.
	
	Suppose $\neg R_i(\bar{x})$ holds in $\N$. Then this is seen from some $s$ onward since $R_i$ is a $\Pi^0_1$ relation in $\N$. Since each $N_s\in \C_s$, we see that at no stage $\geq s$ can we have $n+6$-many $\Omega_{\langle i,j_0\rangle}$-extensions over $\bar{x}$, thus in the limit we do not have $n+6$-many $\Omega_{\langle i,j_0\rangle }$-extensions over $\bar{x}$.	
\end{proof}

We define $\hat{\N}$ to be the definitional expansion of $\N\vert \L$ to the language $\hat{\L}$ given by Lemma \ref{Ndefexp}. It now suffices to show that $\hat{\N}$ satisfies the properties of the generic of the class $\hat{\C}$ defined above. This proves that $\hat{\N}\cong \hat{\M}$, so $\N\vert \L = \hat{\N}\vert \L\cong \hat{\M}\vert \L = \M$.

Note that for $R_i\notin \hat{\L}$, we may very well have $R_i(\bar{a})$ holding in $\N$, but this will not be seen in $\N\vert \L$ or in $\hat{\N}$.

\begin{lem}\label{ssclosurestabilize}
	For every $\bar{a}\in \N$, there is some $k$ so that the self-sufficient closure of $\bar{a}$ is the same in every $N_l$ with $l\geq k$. 
\end{lem}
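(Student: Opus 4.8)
The plan is to fix $\bar a$, observe that $d:=\delta(\bar a,N_l)$ is eventually constant in $l$, and then show that the self-sufficient closure $X_l$ of $\bar a$ in $N_l$ moves at only finitely many stages $l$; the stage $k$ sought is then any stage past all these moves. The first point is the invariant built into the construction: Lemma \ref{removedefunct} says the clean-up phase preserves $\delta(\,\cdot\,,-)$, and across a pure amalgamation step $N_{s-1}\leq N_s$, whence $\delta(X,N_s)=\delta(X,N_{s-1})$ for $X\subseteq N_{s-1}$ by the basic facts about strong substructures. Fix $s_0$ with $\bar a\subseteq N_{s_0}$ and let $X_{s_0}$ be the self-sufficient closure of $\bar a$ in $N_{s_0}$, which is finite by Lemma \ref{ssclosurealg}.

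Next I would pin down exactly when the closure moves. Across a pure amalgamation $N_{s-1}\leq N_s$, transitivity of $\leq$ together with a one-line minimality check give $X_s=X_{s-1}$. If the clean-up at stage $s$ deletes $R_i(\bar b)$ with $\bar b\not\subseteq X_{s-1}$, then that deletion does not change $\delta(X_{s-1})$, and a computation with the free join $N_{s-1}'=N_{s-1}\oplus_{\bar b}D_t$ — using Lemma \ref{possible zeros in D_t} to see that any subset of $D_t$ with non-positive pre-dimension over a piece of $\bar b$ must be large — shows $X_{s-1}$ is still strong in $N_{s-1}'$ and $\subseteq$-minimal, so again $X_s=X_{s-1}$. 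If instead $\bar b\subseteq X_{s-1}$, then deleting $R_i(\bar b)$ raises $\delta(X_{s-1})$ to $d+1$, so $X_{s-1}$ can no longer be the (unique, by Lemma \ref{ssclosurealg}) self-sufficient closure and $X_s\neq X_{s-1}$. Hence the closure moves exactly at clean-ups whose target $\bar b$ already lies inside the current closure.

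Then I would bound such moves. When $\bar b\subseteq X_{s-1}$, one checks that $X_{s-1}\cup D_t\leq N_{s-1}'$ and $\delta(X_{s-1}\cup D_t)=d$, so $X_s\subseteq X_{s-1}\cup D_t$; and the block $D_t$ adjoined in the clean-up carries only edges of the ternary relation $R$. An induction then shows that every occurrence of a relation $R_i$ on a subtuple of any $X_l$ ($l\geq s_0$) is already an occurrence on a subtuple of $X_{s_0}$, and that each move of the closure deletes one such occurrence. Once deleted it is never reinstated, since its tuple already lies in $N_{s_0}$ and neither amalgamation nor clean-up ever adds a relation on an already-present tuple; as $X_{s_0}$ is finite there are only finitely many such occurrences, so the closure is eventually a fixed set $X$. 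Finally, since by the previous step a clean-up deleting a relation inside $X$ would move the closure, no relation inside $X$ is deleted after the set stabilizes, so the isomorphism type of $X$ stabilizes as well.

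I expect the main obstacle to be the two pre-dimension computations flagged above: that an ``irrelevant'' clean-up (target $\bar b\not\subseteq X_{s-1}$) leaves the closure untouched as a set, and that a ``relevant'' clean-up satisfies $X_s\subseteq X_{s-1}\cup D_t$ with $\delta(X_{s-1}\cup D_t)=d$. Both are routine but slightly delicate manipulations of the pre-dimension of the free join $N_{s-1}'=N_{s-1}\oplus_{\bar b}D_t$, and both lean on Lemma \ref{possible zeros in D_t}, which identifies precisely which subsets of $D_t$ can have non-positive relative pre-dimension over pieces of the base $\bar b$.
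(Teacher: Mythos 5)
Your proof is correct and takes essentially the same route as the paper's: identify the potential-function argument that each genuine move of the closure strictly decreases the (finite, non-increasing) count of occurrences of relations other than $R$ on the closure, because the removed occurrence $R_i(\bar b)$ disappears while the replacement material (the $D_t$-block, reached only through free join) carries only $R$-edges and can never acquire new $R_i$-edges since amalgamation never adds relations among already-present elements. The paper states this in three sentences; you fill in the two points it leaves implicit, namely that a clean-up whose target tuple $\bar b$ lies outside $X_{s-1}$ leaves the closure untouched, and that after a relevant clean-up the new closure sits inside $X_{s-1}\cup D_t$. Both of those verifications are correct as outlined (the $\bar b\not\subseteq X_{s-1}$ case because $\delta(X_{s-1})$ is unchanged by removing $R_i(\bar b)$ and by the disjoint free join of $D_t$, so $X_{s-1}$ stays strong and minimal; the $\bar b\subseteq X_{s-1}$ case because $\delta(X_{s-1}\cup D_t)=d$ and the closure is contained in every set of minimal predimension). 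One caveat worth flagging but not a gap: when $n\le 1$ the tuple $\bar b$ has size $<4$ and Corollary~\ref{droppingdimensionsatwill} pads it with relationless points before attaching $D_t$; the argument survives this wrinkle since the padding points carry no relations, but your computation $\delta(X_{s-1}\cup D_t)=d$ as written silently assumes $\bar c = \bar b$.
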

\begin{proof}
	The self-sufficient closure of $\bar{a}$ may change because some defunct relation $R_i$ in the self-sufficient closure is removed. At that point, we add more elements and occurrences of the relation $R$ so that the predimension of $\bar{a}$ is made the same in $N_{s}$ as in $N_{s-1}$. In doing so, the self-sufficient closure may have grown, but the total number of occurrences of relations other than $R$ on the self-sufficient closure has decreased. This can happen only finitely often.
\end{proof}

\begin{lem}\label{Csagree}
	Let $A$ be an $\hat{\L}$-structure. Then  $A\in \hat{\C}$ if and only if $A\in \C_s$ for all sufficiently large $s$.
\end{lem}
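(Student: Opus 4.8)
The plan is to observe that $\hat\C$ and each $\C_s$ are cut out by the same three clauses, differing only in the choice of $\mu$ versus $\mu_s$: membership in $\C_0$ (i.e.\ $\delta\ge 0$ on every finite substructure), satisfaction of $\zeta$, and respecting the relevant $\mu$- (resp.\ $\mu_s$-) bound. The first two clauses are literally identical across all of these classes, so the statement reduces to showing that a fixed finite $\hat\L$-structure $A$ respects the $\mu$-bound if and only if it respects the $\mu_s$-bound for all sufficiently large $s$. I would also record at the outset that, by Lemma \ref{lem: s-1 subseteq s}, the classes $\C_s$ are increasing, and that $A$, being finite, is an $\L_s$-structure once $s$ exceeds every relation index occurring in $A$; so ``$A\in\C_s$ for all large $s$'' is equivalent to ``$A\in\C_s$ for some $s$''.

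The heart of the argument is that, for the fixed finite $A$, the functions $\mu_s$ and $\mu$ agree on every configuration that can occur inside $A$, once $s$ is large enough. Indeed only finitely many forms $Y/X$ of minimally simply algebraic extensions are realized in $A$, only finitely many bases $F\subseteq A$ are involved, and the third-coordinate values $g_A(F)$ that matter form a fixed finite set of numbers. I would then run through the clauses defining $\mu$: if $Y/X$ is not an $\Omega$-extension then $\mu(X,Y,m)=\mu_s(X,Y,m)=|X|+3$ for every $m$, identically; if $Y/X$ is an $\Omega_{\langle i,j\rangle}$-extension for the (unique) relevant pair $\langle i,j\rangle$, then when $R_i$ is limited away the symbol $R_i$ is not in $\hat\L$, so it holds on no tuple of the $\hat\L$-structure $A$, making the first two clauses of both $\mu$ and $\mu_s$ vacuous on configurations in $A$, while the third clause is governed by whether $\langle i,j\rangle\in S_0$ (resp.\ $S_{0,s}$), and these agree for all large $s$ since $\bigcup_s S_{0,s}=S_0$; and when $R_i$ is not limited away, $S_1^{[i]}$ is finite, so $S_{1,s}^{[i]}$ stabilizes, whence $j_0^s=j_0$, $j_1^s=j_1$, and $S_{0,s}^{[i]}=S_0^{[i]}$ for all large $s$, so the clauses of $\mu_s$ keyed to index $i$ coincide with those of $\mu$. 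Choosing $s$ past all the finitely many indices and stabilization stages entering these finitely many configurations yields $\mu_s(X,Y,g_A(F))=\mu(X,Y,g_A(F))$ for every configuration occurring in $A$; here one also checks that the relevant value $g_A(F)$ is the same finite quantity in both settings, depending only on $A$ and finitely much of the relation enumeration.

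Granting this agreement, both directions are immediate. If $A\in\hat\C$ then $A$ respects the $\mu$-bound, hence (being also in $\C_0$, satisfying $\zeta$, and an $\L_s$-structure for large $s$) respects the $\mu_s$-bound for all large $s$, so $A\in\C_s$ for all large $s$. Conversely, if $A\in\C_s$ for all large $s$, then taking $s$ large enough that moreover $\mu_s$ and $\mu$ agree on $A$'s configurations shows that $A$ respects the $\mu$-bound, so $A\in\hat\C$. I expect the only genuine obstacle to be the bookkeeping in the middle paragraph: pinning down that the only clauses where $\mu_s$ could differ from $\mu$ are those attached to a specific index $i$ — the $j_0^s/j_1^s$ clauses or the $S_{0,s}$ clause — each of which stabilizes, and that limited-away relation symbols genuinely never occur in an $\hat\L$-structure; verifying that the $\C_0$ and $\zeta$ conditions, and the relevant $g$-values, transfer unchanged is routine.
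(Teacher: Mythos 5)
Your proof is correct and takes essentially the same approach as the paper's: reduce the claim to showing that $\mu$ and $\mu_s$ coincide on the finitely many configurations arising inside the fixed finite structure $A$, for all sufficiently large $s$, after observing that the $\C_0$- and $\zeta$-clauses and the relevant $g$-values are identical across the classes. Your case analysis (vacuity of the first two clauses for limited-away $R_i$ on an $\hat{\L}$-structure, stabilization of $j_0^s, j_1^s$ for $R_i\in\hat{\L}$, and stabilization of $S_{0,s}$ on the finitely many relevant $\langle i,j\rangle$) is somewhat more explicit than the paper's terse ``$\mu=\mu_s$ for any extension involving only the relations that occur in $A$,'' but the underlying idea is identical.
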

\begin{proof}
	
	Let $s$ be any stage large enough that for every relation $R_i\in \hat{\L}$ which occurs in $A$, $S_1^{[i]}$ is enumerated by stage $s$. It suffices to show that $A\in \hat{\C}$ if and only if $A\in \C_s$. Suppose $A\notin \hat{\C}$. This could happen if some set has $\delta(X)<0$ or violates $\zeta$, which certainly ensures that $A\notin \C_s$, or if $A$ violates the $\mu$-bound. But $\mu=\mu_s$ for any extension involving only the relations that occur in $A$, so this implies $A\notin \C_s$. The same argument shows the implication the other way.
\end{proof}

\begin{lem}
	$\hat{\N}$ is a generic model for the class $\hat{\C}$.
\end{lem}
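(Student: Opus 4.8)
The plan is to verify the three defining properties of the generic model of $\hat{\C}$, namely (1), (2), and (3), for the structure $\hat{\N}$; by the back-and-forth characterization these suffice to conclude $\hat{\N}\cong \hat{\M}$. Property (1), countability, is immediate since $\N$ is built as a countable union of finite structures. The real content is (2) and (3), and the key technical tool throughout will be Lemmas \ref{ssclosurestabilize} and \ref{Csagree}, which let us pass between the eventual behavior of the approximations $N_s \in \C_s$ and the limiting behavior in $\hat{\N}$ relative to $\hat{\C}$.

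\textbf{Property (2).} Suppose $A \leq \hat{\N}$ is finite. First I would argue that $A$ is also strong in some $N_s$: by Lemma \ref{ssclosurestabilize} the self-sufficient closure of (the underlying tuple of) $A$ stabilizes from some stage $k$ onward, and moreover by the construction the predimension of any fixed tuple is preserved when passing from $N_{s-1}$ to $N_s$, so for large $s$ the set $A$ (as an $\hat{\L}$-structure, reading off $R_i$ via the $\Pi^0_1$ approximation) sits strongly inside $N_s$. Since each $N_s \in \C_s$, the downward-closure of $\C_s$ under strong substructures gives $A \in \C_s$ for all large $s$, and then Lemma \ref{Csagree} yields $A \in \hat{\C}$. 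One subtlety to address carefully: we must make sure that the $\hat{\L}$-structure induced on $A$ by $\hat{\N}$ (via $R_i(\bar a) \iff \exists^{n+6}\bar y\, \Omega_{\langle i,j_0\rangle}(\bar a,\bar y)$, as in Lemma \ref{Ndefexp}) agrees, on relations that appear, with the $\hat{\L}$-structure that $N_s$ would assign; this follows from Lemma \ref{Ndefexp} together with the fact that unblockable extensions, once realized by $R$ alone, stay realized.

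\textbf{Property (3).} This is the amalgamation/genericity property and is the main obstacle. We must show: if $B \leq \hat{\N}$, $B \leq C \in \hat{\C}$, then there is an embedding $f : C \to \hat{\N}$ fixing $B$ with $f(C) \leq \hat{\N}$. The strategy is to reduce to a single amalgamation requirement handled at some stage of the construction. Fix $B$ and $C$. Choose $s$ large enough that: (i) $B \leq N_s$ and the self-sufficient closure of $B$ has stabilized (Lemma \ref{ssclosurestabilize}); (ii) $C \in \C_s$ (Lemma \ref{Csagree}); (iii) $B$ lies among the first $s$ elements of $\N$ and $|C \smallsetminus B| \le s$, so that the amalgamation requirement for ``$B \leq C$ in $\C_s$'' is among the first $s$ requirements — this is exactly why the construction was set up to enumerate requirements so that base-size-$\le s$, extension-size-$\le s$ requirements over the first $s$ elements come early and persist. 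At that stage the construction applies the Strong Amalgamation Lemma for $\C_s$ to realize this requirement, producing $f(C) \leq N_s$. The remaining work is to check that strength and the isomorphism type are preserved in the limit: $f(C) \leq N_s \leq N_{s+1} \leq \cdots$ (strength is preserved since predimensions of fixed tuples don't change and self-sufficient closures only shrink in non-$R$ relations, cf.\ Lemma \ref{ssclosurestabilize}), so $f(C) \leq \bigcup_t N_t = \N$, hence $f(C) \leq \hat{\N}$; and the $\hat{\L}$-isomorphism type of $f(C)$ equals that of $C$ because the only $\hat{\L}$-relations possibly removed after stage $s$ are defunct $R_i$ with $R_i \notin \hat{\L}$ (or, for $R_i \in \hat{\L}$, they are never removed since by Lemma \ref{Ndefexp} the relation is coded by an unblockable extension and is correctly maintained), and these do not affect the $\hat{\L}$-reduct. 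I expect the bookkeeping around requirement (iii) — confirming that every amalgamation requirement over $\hat{\C}$ is eventually, permanently, among the active requirements and gets satisfied at a stage where $\C_s$ already ``knows'' the relevant relations — to be the most delicate point, but it is precisely what the enumeration convention for requirements was designed to guarantee, together with Lemma \ref{lem: s-1 subseteq s} ($\C_{s-1} \subseteq \C_s$) ensuring that once a requirement is satisfiable it remains so.
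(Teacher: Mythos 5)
Your overall approach matches the paper's: verify properties (1), (2), (3) via the stabilization lemma, Lemma \ref{Csagree}, and the requirement-enumeration machinery. Properties (1) and (2) are handled correctly and essentially as in the paper.

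There is, however, a genuine gap in your argument for property (3). Your conditions (i)--(iii) for choosing $s$ omit a crucial fourth condition, which the paper imposes: \emph{choose $s$ large enough that $S_{1,s}^{[i]}=S_1^{[i]}$ for every $R_i\in\hat{\L}$ occurring in $C$.} Without this, a relation $R_i(\bar{a})$ with $R_i\in\hat{\L}$ placed on a tuple $\bar{a}\subseteq f(C)$ at stage $s$ \emph{can} subsequently be removed: the clean-up phase removes a defunct $R_i$-relation precisely at a stage when a new element enters $S_1^{[i]}$, and $S_1^{[i]}$ being finite does not mean it is already fully enumerated at stage $s$. If such a removal happens, $\N\not\models R_i(\bar{a})$ and hence (by Lemma \ref{Ndefexp}) $\hat{\N}\not\models R_i(\bar{a})$, so $f$ fails to be an $\hat{\L}$-embedding of $C$. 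Your justification --- ``for $R_i\in\hat{\L}$, they are never removed since by Lemma \ref{Ndefexp} the relation is coded by an unblockable extension and is correctly maintained'' --- inverts the logical dependence: Lemma \ref{Ndefexp} asserts the \emph{equivalence} of the $\Pi^0_1$ relation and the $\exists^{n+6}\Omega$-definition, so removal of the relation makes both fail; it does not protect the relation from removal. Your condition (ii), that $C\in\C_s$, is strictly weaker than full enumeration of the relevant columns, since by Lemma \ref{lem: s-1 subseteq s} membership in $\C_s$ can occur well before $S_1^{[i]}$ stabilizes.

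Once the extra condition on $s$ is added, the clean-up phase can never again touch any $R_i$ occurring in $f(C)$ (for $R_i\in\hat{\L}$), and the rest of your argument --- using the ``no relation in $X$ ever removed $\Rightarrow$ $X\leq N_t$ for all $t\geq s$'' preservation fact, which is the paper's Claim inside this proof --- goes through as you sketch it.
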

\begin{proof}
	We need to verify the 3 facts:
	\begin{enumerate}
		\item $\hat{\N}$ is countable
		\item If $A\leq \hat{\N}$ is finite, then $A\in \hat{\C}$.

		\item Suppose $A\leq \hat{\N}$ and $A\leq B\in\hat{\C}$, then there is an embedding $f:B\rightarrow \hat{\N}$ so that $f(B)\leq \hat{\N}$ and $f$ is the identity on $A$.
		
	\end{enumerate}
	
	The first here is trivial, since $\hat{\N}$ is a definitional expansion of a countable structure $\N\vert \L$, so it is countable. Given $A\leq \hat{\N}$, $A\leq N_s$ and thus is in $\C_s$ for all sufficiently large $s$, by Lemma \ref{ssclosurestabilize}. Thus by Lemma \ref{Csagree}, $A\in \hat{\C}$.
	
	Lastly, consider $A\leq \hat{\N}$ and $A\leq B\in \hat{\C}$. Let $s$ be large enough that $A,B\in \C_t$ for every $t\geq s$. Further, let $s$ be large enough that $A\leq N_t$ for all $t\geq s$. Further, let $s$ be large enough that we consider the amalgamation requirement to build this $B$. Further, let $s$ be large enough that $S_{1,s}^{[i]}=S_1^{[i]}$ for every $R_i\in\hat{\L}$ occurring inside $B$. Then in $N_s$, we have the embedding $f:B\rightarrow N_s$ so that $f(B)\leq N_s$ and $f$ is the identity on $A$. Further, since no element is ever enumerated into $S_1^{[i]}$, we cannot ever remove any $R_i$-relations occurring in $B$. It suffices to see that $f(B)\leq N_t$ for every $t>s$. This follows from the following claim:
	
	\begin{claim*}
		If $X\leq N_s$ and no relation inside $X$ is ever removed, then $X\leq N_t$ for every $t>s$.
	\end{claim*}
\begin{proof}
	We proceed by induction. This is true for $t=s$. For every $t\geq s$, the $\delta$-value of $X$ as a substructure of $N_t$ is the same, since no relation is ever removed from $X$. Thus, we will unambiguously write $\delta(X)$ which does not depend on stage.
	
	When we pass from $N_{t-1}$ to $N_{t-1}'$ we ensured by adding elements and occurrences of $R$ that we have maintained dimension (i.e. $\delta(A,N_{t-1})=\delta(A,N_{t-1}')$ for every $A\subseteq N_{t-1}$), so $\delta(X)=\delta(X,N_{t-1})=\delta(X,N_{t-1}')$ and thus $X\leq N_{t-1}'$. Lastly, to go from $N_{t-1}'$ to $N_t$, we use strong amalgamation, so $X\leq N_{t-1}'\leq N_t$.
\end{proof}
\end{proof}

Thus, $\N\vert \L$ is a recursive copy of the saturated model of $T$.

\begin{lem}
	If $\bar{a}$ is an independent set in $\hat{\N}$ of size $\leq n$, then $\acl(\bar{a})$ is a $\Sigma^0_1$ subset of $\hat{\N}$.
\end{lem}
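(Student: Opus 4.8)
The plan is to reduce the question to the predimension computations that are visible during the stagewise construction. Write $\hat{\N}$ for the definitional expansion of the recursive structure $\N\vert\L$ given by Lemma~\ref{Ndefexp}; since $\hat{\N}$ is a definitional expansion, the operator $\acl$ and the dimension function $d$ of $\hat{\N}$ agree with those of $\N\vert\L\cong\M$. By Lemmas~\ref{d goes up is generic} and~\ref{same d is algebraic}, $x\in\acl(\bar a)$ if and only if $d(\bar a x)=d(\bar a)$. Since $\bar a$ is independent, $d(\bar a)=\abs{\bar a}$, and independence also forces that no $\hat{\L}$-relation holds on $\bar a$, so $\delta(\bar a)=\abs{\bar a}=d(\bar a)$ and hence $\bar a\leq\hat{\N}$. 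Therefore $x\in\acl(\bar a)$ iff $d(\bar a x)\leq\abs{\bar a}$ iff (by the definition of $d$ together with Lemma~\ref{ssclosurealg}) there is a finite $D$ with $\bar a x\subseteq D\subseteq\hat{\N}$ and $\delta(D)\leq\abs{\bar a}$.

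First I would define $W$ to be the set of all $x$ such that for some stage $s$ there is a finite $C$ with $\bar a x\subseteq C\subseteq N_s$ and $\delta_{\L_s}(C)\leq\abs{\bar a}$, where $\delta_{\L_s}$ is the predimension computed in the finite language $\L_s=\{R\}\cup\{R_i\mid i<s\}$ from the structure $N_s$. Since the construction of the chain $(N_s)_s$ is effective, $W$ is $\Sigma^0_1$, so it suffices to show $W=\acl(\bar a)$. For $W\subseteq\acl(\bar a)$: if $x\in W$ is witnessed by $C\subseteq N_s$, then $\delta(\bar a x,N_s)\leq\delta_{\L_s}(C)\leq\abs{\bar a}$; since the construction keeps $\delta(\,\cdot\,,N_t)$ constant in $t$ on any tuple already present (this is exactly what the clean-up phase achieves, via Lemma~\ref{removedefunct} and Corollary~\ref{droppingdimensionsatwill}), it follows that $d(\bar a x)=\delta(\bar a x,\hat{\N})\leq\abs{\bar a}$, and combined with $d(\bar a x)\geq d(\bar a)=\abs{\bar a}$ this gives $d(\bar a x)=d(\bar a)$, so $x\in\acl(\bar a)$.

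For $\acl(\bar a)\subseteq W$: given $x\in\acl(\bar a)$, fix a finite $D\subseteq\hat{\N}$ with $\bar a x\subseteq D$ and $\delta(D)=\abs{\bar a}$. Only finitely many relation symbols occur on $D$, and each such $R_i\in\hat{\L}$ has $S_1^{[i]}$ finite, so $R_i$ is removed only finitely often over the course of the construction, while $R$ is never removed; hence for all sufficiently large $s$ the structure $N_s$ contains $D$ together with at least all the $\hat{\L}$-relations that $D$ carries in $\hat{\N}$, so $\delta_{\L_s}(D)\leq\delta(D)=\abs{\bar a}$ and $x\in W$. The step I expect to be the main obstacle is justifying the identity $\delta(\bar a x,N_s)=d(\bar a x)$ used in the first inclusion: one has to verify that relations $R_i\notin\hat{\L}$ which may appear transiently in some $N_s$ never lower the dimension of an already‑present tuple below its true value without later being removed and compensated for — which is precisely the content of the defunct‑relation bookkeeping and of Corollary~\ref{droppingdimensionsatwill}, so the proof should quote those facts rather than re‑derive them.
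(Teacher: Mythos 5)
Your approach is the same as the paper's in outline — define a $\Sigma^0_1$ set by scanning for finite $C\subseteq N_s$ with $\bar a x\subseteq C$ and $\delta_{\L_s}(C)\leq\abs{\bar a}$, and then prove it equals $\acl(\bar a)$ — but the argument for the inclusion $W\subseteq\acl(\bar a)$ has a genuine gap that you flag in your last sentence without actually closing. The step "it follows that $d(\bar a x)=\delta(\bar a x,\hat{\N})\leq\abs{\bar a}$" does not follow from the fact that the construction keeps $\delta(\,\cdot\,,N_t)$ constant. The point is that $\delta(\bar a x,N_t)$ is computed using all relation symbols present in $N_t$, including relations $R_i(\bar z)$ with $R_i\notin\hat{\L}$ that are never removed; these relations are invisible in $\hat{\N}$, so a witnessing set $Y\subseteq N_t$ with small $\L_t$-predimension can fail to certify anything about the $\hat{\L}$-predimension. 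Corollary~\ref{droppingdimensionsatwill} is the wrong tool here: it explains how a removal is compensated, not why a spurious relation $R_i\notin\hat{\L}$ inside your witness will ever be removed.

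The paper closes this gap with an argument that crucially uses the hypothesis $\abs{\bar a}\leq n$, which your proposal never invokes. One takes $Z$ to be the eventually-stable self-sufficient closure of $\bar a x$ in $N_t$ (which stabilizes by Lemma~\ref{ssclosurestabilize}); then $\delta(Z)=\abs{\bar a}\leq n$. If some $R_i(\bar z)$ holds in $Z$ with $R_i\notin\hat{\L}$, then $\delta(\bar z)=n+1>\delta(Z)$, so $\bar z\not\leq N_t$ and $g_{N_t}(\bar z)$ is finite; since $R_i$ is limited away, $S_1^{[i]}$ is infinite, so eventually $\langle i,j^t_1\rangle>g_{N_t}(\bar z)$ and the occurrence becomes defunct and is subsequently removed in the clean-up phase — but that would change $Z$, contradicting its stability. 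This is exactly the place where the bound $n$ enters and where recursive models of higher finite dimension go wrong, so the proof cannot work without making this point explicit. You should replace the appeal to Corollary~\ref{droppingdimensionsatwill} by this defunctness argument, keyed to $\abs{\bar a}\leq n$.
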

\begin{proof}
	Let $X$ be the set of elements which ever appear to be in $\acl(\bar{a})$. That is, we say $x$ is enumerated into $X$ at stage $s$ if there is some $Y\subseteq N_s$ so that $\bar{a}\leq Y$, $b\in Y$, and $\delta(Y/\bar{a})=0$. It is clear that $\acl(\bar{a})\subseteq X$ since for any $x\in \acl(\bar{a})$, such a $Y$ must exist in $\N$, thus in every large enough $N_s$. Also, $X$ is $\Sigma^0_1$. The fear is that since some relations $R_i$ get removed either due to being limited away (i.e. $R_i\notin \hat{\L}$) or in the clean-up phase, $X$ may contain some elements that are not actually algebraic over $\bar{a}$. 
	
	Fix $b\in X$. It enters $X$ because we see some $Y$ in $N_s$ containing $\bar{a}\cup \{b\}$ so that $\delta(Y/\bar{a})=0$. Any removed relation in the clean-up phase is immediately replaced by an $R$-witnessed dimension drop which maintains $\delta$. Thus, for every $t>s$, $\delta(Y,N_t)=\delta(Y,N_s)$, thus there is always some $Y'\subseteq N_t$ containing $Y$ with $\delta(Y')\leq \abs{\bar{a}}$. Let $Z$ be the self-sufficient closure of $\bar{a}b$ in $N_t$ for all sufficiently large $t$. We must have $\delta(Z)= \abs{\bar{a}}$, since $\delta(Z)=\delta(\bar{a}b,N_t)= \abs{\bar{a}}$. To see that $b\in \acl(\bar{a})$ witnessed by $Z$, we need only argue that each relation appearing in $Z$ is in $\hat{L}$. Were it true that $R_i(\bar{z})$ holds in $Z$ and $R_i\notin \hat{\mathcal{L}}$, then from some stage onwards the relation $R_i(\bar{z})$ would become defunct. This is because $R_i(\bar{z})$ implies $\delta(\bar{z})=\abs{\bar{z}}-1=n+1$ yet $\delta(Z) = |\bar{a}|\leq n$, so $g_{N_t}(\bar{z})$ is finite. But if $R_i(\bar{z})$ were to become defunct from some stage onwards, then it would eventually be removed.\footnote{Note that this is precisely the advantage that the models of dimension $\leq n$ have over models of finite dimension $>n$.}. But removing $R_i(\bar{z})$ would change the self-sufficient closure of $\bar{a}b$, contrary to the definition of $Z$. Thus, $b\in \acl(\bar{a})$ in $\hat{\N}$.
\end{proof}

\begin{cory}\label{compstuff}
	We conclude that $\SRM(T)\supseteq [0,n]\cup \{\omega\}$.
\end{cory}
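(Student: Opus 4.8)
The plan is to extract both extremes of the spectrum from what has now been established. For the value $\omega$: the construction of $\N$ gives, after reduct, a recursive structure $\N\vert\L\cong\M$, and $\M$---being the countable saturated model of the strongly minimal, not $\aleph_0$-categorical theory $T$---is precisely the model $M_\omega$ of infinite dimension in the Baldwin--Lachlan chain. Hence $\omega\in\SRM(T)$ with no further work.

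For each $l\le n$: I would fix an independent $l$-tuple $\bar a$ inside $\N$, which exists because $\dim\M=\omega\ge l$ (one could take $l$ members of the independent set $I$ from property $(3')$, though any independent $l$-tuple suffices, and being finite it costs nothing to name). The immediately preceding lemma shows $\acl(\bar a)$ is a $\Sigma^0_1$ subset of the domain of $\hat\N$; since $\hat\N$ is a definitional expansion of $\N\vert\L$, this is literally the statement that $D:=\acl^{\N\vert\L}(\bar a)$ is an r.e.\ subset of the \emph{recursive} structure $\N\vert\L$. By strong minimality of $T$, $D$ equipped with the induced $\L$-structure is an elementary submodel of $\M$ whose dimension equals $|\bar a|=l$, hence isomorphic to $M_l$. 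Finally I would invoke the routine fact that an infinite r.e.\ subset $D$ of a recursive structure carries a recursive presentation: take a recursive bijection $f\colon\omega\to D$ (available since $D$ is infinite and r.e.---every model of $T$ is infinite), and pull the relations of $\N\vert\L$ back along $f$; deciding a relation on a tuple only requires evaluating $f$ and then querying the recursive structure $\N\vert\L$. This produces a recursive copy of $M_l$, so $l\in\SRM(T)$; ranging over $l\le n$ and combining with the previous paragraph gives $[0,n]\cup\{\omega\}\subseteq\SRM(T)$.

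The step requiring the most care---though it is still routine---is the transition itself: verifying that $\Sigma^0_1$-ness is unaffected by passing between $\N\vert\L$ and its definitional expansion $\hat\N$, recalling from strong minimality that the algebraic closure of an independent $l$-set realizes the model of dimension $l$, and packaging an r.e.\ substructure of a recursive structure as an honest recursive presentation. The genuinely hard work has already been spent---in the preceding lemma, where the design of $\mu$, the notion of a defunct relation, and the clean-up phase pay off, and in establishing $\N\vert\L\cong\M$---so I expect no real obstacle in the corollary itself.
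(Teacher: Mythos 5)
Your argument is correct and follows essentially the same route as the paper's proof: $\omega\in\SRM(T)$ because $\N\vert\L\cong\M$ is recursive, and each $l\le n$ lands in the spectrum by combining the preceding lemma ($\acl(\bar a)$ is $\Sigma^0_1$ for independent $\bar a$ of size $\le n$) with the standard fact that an infinite r.e.\ subset of a recursive structure carries a recursive presentation. The paper simply cites that standard fact without spelling out the recursive bijection argument you supply, and likewise takes the irrelevance of passing between $\N\vert\L$ and its definitional expansion $\hat\N$ for granted; your added detail is harmless but not substantively different.
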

\begin{proof}
	
	It is a standard fact that if $\mathcal{B}$ is any recursive structure and $\mathcal{A}$ is a recursively enumerable subset of $\mathcal{B}$, then $\mathcal{A}$ has a recursive presentation.
	
	Since the $k$-dimensional model of $T$ is $\acl(\bar{a})$ for an independent tuple in $\hat{\mathcal{N}}$ of size $k$ and $\mathcal{N}$ is recursive and infinite-dimensional, we conclude that $\SRM(T)\supseteq [0,n]\cup \{\omega\}$.
\end{proof}

\section{Defeating other models.}\label{choosingS1}
Next we ensure that $[n+1,\omega)\cap \SRM(T)=\emptyset$. We do this by enumerating $S_1$ appropriately. We will describe how numbers enter $S_1^{[i]}$ and we will note that we enumerate this column in order. In particular, at stage $s$, if we put anything into this column, we will put $s$ into $S_1^{[i]}$, and thus this is the largest number to enter this column. Even though in the construction of the recursive models, we assumed that $\abs{S_{1,s+1}\smallsetminus S_{1,s}}=1$ for all $s$, we will not be careful in this below, as any enumeration of an infinite $\Sigma^0_1$-set can be altered to give one enumerating the set in the same order and enumerating exactly one element per stage.

Let $B_i,\bar{b}_i$ be the $i$th pair consisting of a partial recursive atomic diagram of an $\L$-structure along with a finite tuple of size at least $n+1$ given by canonical index. We next describe a strategy to enumerate the $i$th column of $S_1$ so that either $B_i\not\models T$ or $\bar{b}_i$ is not a basis for $B_i$. These strategies are put together into a construction by running the first $s$ of these strategies in order at each stage $s$. At stage $s$, for $\bar{c}\in B_i$ we say that $R_i(\bar{c})$ holds if there is some $\langle i,j\rangle\in S_{1,s}\smallsetminus S_{0,s}$ so that $\exists^{n+6} \Omega_{\langle i,j\rangle}$-extensions over $\bar{c}$.

 We do this as follows:

Step 0: Let $\bar{b}$ consist of the first $n+1$ elements of $\bar{b}_i$. Enumerate $\langle i,0\rangle, \langle i,1\rangle $ into $S_1$. Wait until a stage where we see some element $c$ so that $R_i(\bar{b},c)$ holds.

Step 1: Let $j^s_0<j^s_1$ and $\{j^s_0,j^s_1 \}=(S_{1,s}^{[i]}\smallsetminus S_{0,s}^{[i]})$ at stage $s$. When we first come to this step, we define the set of obstructions to moving to the next step. If we see a set $Y\subseteq B_i$ and enough relations hold on $Y$ so that $\delta(Y)<\abs{\bar{b}_i}$ and $\bar{b}_i\subseteq Y$, then we call $Y$ an obstruction to moving to the next step. 

If $S_1^{[i']}$ enumerates a number after the stage when we entered this step, we call the relation symbol $R_{i'}$ suspicious (i.e., we suspect it might limit away). At each stage $t$, we let $\mathcal{L}_t$ be the set of non-suspicious relation symbols. We define $\dalet(Y)$ for any $Y$ to be $\abs{Y}-\Sigma_{Q\in \mathcal{L}_t}\#Q(Y)$. If at some stage $t$ we see $\dalet(Y)\geq \abs{\bar{b}_i}$, then we say the obstruction $Y$ has been removed.

We say that the requirement is ready for the next step if $\bar{b}c$ has $n+6$-many $\Omega_{\langle i,j^s_0\rangle}$ and $n+6$-many $\Omega_{\langle i,j^s_1\rangle}$-extensions and all obstructions have been removed. Wait until the requirement is ready for the next step. At that point, go to Step 2.

Step 2: Put $\langle i,s\rangle$ ($s$ is the current stage) into $S_{1,s}$. Note that this enumerates $\langle i,j^{s-1}_0\rangle$ into $S_{0,s}$. Return to Step 1.

\begin{lem}
	$[n+1,\omega)\cap \SRM(T)=\emptyset$.
\end{lem}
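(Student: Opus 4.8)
The plan is to show that for each $i$, the strategy described above ensures that either $B_i \not\models T$ or $\bar{b}_i$ is not a basis for $B_i$, so that no recursive structure can be a model of $T$ of finite dimension in $[n+1,\omega)$. First I would analyze the two ways the strategy can behave: either it eventually gets stuck waiting forever at Step 0 or Step 1, or it cycles between Step 1 and Step 2 infinitely often, enumerating $S_1^{[i]}$ infinitely (so that $R_i$ is limited away). I would argue that if the strategy gets stuck at Step 0, then $B_i$ never produces an element $c$ with $R_i(\bar{b},c)$; but by Lemma \ref{lem: tuple must extend to R_is}, in a genuine model of $T$ with $\bar{b}$ independent (part of a basis), the tuple $\bar{b}$ of size $n+1$ must have $n+4$ elements $y$ with $R_i(\bar{b},y)$ — hence $B_i \not\models T$ or $\bar{b}_i$ is not independent. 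If the strategy gets stuck at Step 1, then either $\bar{b}c$ fails to acquire $n+6$ extensions of one of the two required forms (again contradicting that $B_i$ is a model of $T$ in which $R_i(\bar{b},c)$ holds, via Lemma \ref{edef} and unblockability), or some obstruction $Y$ is never removed. An obstruction $Y$ with $\bar{b}_i \subseteq Y$ and $\delta(Y) < |\bar{b}_i|$ that is never removed means $\daleth(Y)$ stays below $|\bar{b}_i|$ forever, i.e., enough relations on $Y$ lie in $\hat{\L}$ to force $\delta(Y\vert\hat\L) < |\bar b_i|$, which shows $\bar{b}_i \subseteq \acl(Y\setminus \bar b_i \cup \dots)$ is dependent — more precisely, $\dim(Y) < |\bar b_i|$ in $B_i$ viewed as a model of $T$, so $\bar b_i$ is not independent.

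The heart of the argument is the remaining case: the strategy cycles infinitely, so $S_1^{[i]}$ is infinite and $R_i$ is limited away ($R_i \notin \hat{\L}$). Here I would produce an element $c$ witnessing $c \notin \acl(\bar b)$ while $c$ appears in $B_i$ to satisfy $R_i(\bar b, c)$ for the currently-active pair $\langle i, j_0^s\rangle, \langle i, j_1^s\rangle$. The point is that each time we pass from Step 1 to Step 2, we required $\bar b c$ to have $n+6$ extensions of both $\Omega_{\langle i,j_0^s\rangle}$ and $\Omega_{\langle i,j_1^s\rangle}$ forms; after entering $\langle i,s\rangle$ into $S_1$, the old $\langle i,j_0^s\rangle$ enters $S_0$, so those extensions remain $\mu$-permitted (third case of $\mu$), and the new pair becomes active. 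Because we always wait for genuine realization in $B_i$ before advancing, and $B_i$ (if it is a model of $T$ with $\bar b_i$ a basis) must have recursive bounds respected, one shows that $B_i$ continues to believe $R_i(\bar b, c)$ at every stage, hence $B_i \models R_i(\bar b, c)$ in the definitional-expansion sense. But $R_i$ is limited away, so in the true model $\M$, and hence in any model of $T$, there is no such relation $R_i \in \hat\L$; what $B_i$ sees is the phantom relation coming from $g_{B_i}(\bar b c)$ being small enough to permit the $\Omega$-extensions. The obstruction clause of Step 1 is exactly what rules out $g_{B_i}(\bar b c)$ being forced small for the wrong reason: all obstructions having been removed at each cycle means that no $Y \supseteq \bar b_i$ with relations in $\hat\L$ has $\delta < |\bar b_i|$, so $\bar b_i$ genuinely stays independent and $\bar b c$ genuinely has $\delta(\bar b c \vert \hat\L) = n+2$, forcing $g(\bar b c) = \infty$ in a true model — contradicting the existence of the $\Omega$-extensions under $\hat\mu$. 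So $c \notin \acl(\bar b_i)$, yet $c$ is not in the span $B_i$ would compute, showing $\bar b_i$ is not a basis after all (or $B_i \not\models T$).

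I would organize this as: (1) fix $i$ and assume toward a contradiction that $B_i \models T$ and $\bar b_i$ is a basis; (2) rule out the strategy being stuck at Step 0 using Lemma \ref{lem: tuple must extend to R_is}; (3) rule out being stuck at Step 1 — split into the ``missing $\Omega$-extensions'' subcase (use Lemma \ref{edef} plus unblockability, Corollary \ref{unblockablesexist}) and the ``unremoved obstruction'' subcase (the obstruction directly exhibits $\dim_{B_i}(\bar b_i) < |\bar b_i|$); (4) in the cycling case, use the structure of $\mu$ versus $\mu_s$ (old $j_0$ entering $S_0$ keeps extensions permitted) to show $B_i$ permanently believes $R_i(\bar b, c)$ for a witness $c$, then derive that $c \notin \acl_{\M}(\bar b_i)$ because $R_i$ is limited away, contradicting that $\bar b_i$ spans $B_i$. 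The main obstacle I expect is the cycling case: one must carefully track, across the infinitely many Step 1--Step 2 cycles, that (a) a single witness $c$ (or a coherent sequence of witnesses merging into one in the limit, by Lemma \ref{d goes up is generic}-type uniqueness) survives, (b) the $\Omega$-extensions $B_i$ has built over $\bar b c$ are never invalidated as the active pair rotates, and (c) the removal of all obstructions at each cycle genuinely pins $\bar b_i$ as independent in $B_i \vert \hat\L$, so that the phantom $R_i$ is the only possible explanation for what $B_i$ computes — this is where the interplay between $g_{B_i}$, the clean-up phase in the recursive-model construction, and the obstruction bookkeeping must be reconciled.
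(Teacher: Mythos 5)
Your three-case decomposition (stuck at Step 0, stuck at Step 1, infinite cycling) matches the paper's proof, and your Step 0 and Step 1 arguments are essentially the paper's.  The problem is in the cycling case, which is where you yourself flagged the difficulty, and where your reasoning as stated is wrong.

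You argue: obstructions all removed $\Rightarrow$ no $Y\supseteq\bar b_i$ with $\hat\L$-relations and $\delta(Y)<|\bar b_i|$ $\Rightarrow$ $g(\bar b c)=\infty$ in the true language $\Rightarrow$ ``contradicting the existence of the $\Omega$-extensions under $\hat\mu$.''  That last implication is false.  In the infinite outcome $S_1^{[i]}$ is infinite, so $S_0^{[i]}=S_1^{[i]}$ and every relevant $\langle i,j\rangle$ lies in $S_0$; the \emph{third} clause of $\mu$ then allows $|A|+4 = n+6$ many $\Omega_{\langle i,j\rangle}$-extensions over any base, regardless of the $g$-value.  So $g(\bar b c)=\infty$ in no way forbids those $\Omega$-extensions, and there is no contradiction to extract from them.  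Your description of ``$B_i$ sees a phantom relation coming from $g_{B_i}(\bar b c)$ being small enough to permit the $\Omega$-extensions'' is likewise a misdiagnosis: the extensions persist because the indices enter $S_0$, not because of a small $g$-value.  Related to this, your inference that ``obstructions removed $\Rightarrow$ no $Y$ with $\hat\L$-relations has $\delta<|\bar b_i|$'' overreaches: removal is checked via $\dalet$, which ignores only \emph{currently suspicious} relation symbols, and an $\hat\L$-symbol can be temporarily suspicious at the moment removal is certified, so a removed obstruction does not certify anything about $\delta$ in $\hat\L$ at that stage.

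The argument that actually closes the cycling case is the contrapositive of what you attempted, and it does not go through $g$ at all.  Assume for contradiction that $B_i\models T$, $\bar b_i$ is independent, and $c\in\acl(\bar b_i)$.  Then there is a finite $Y\supseteq\bar b_i c$ with $\delta(Y\vert\hat\L)=|\bar b_i|$ witnessing the algebraicity, and $Y$ uses only finitely many $\hat\L$-symbols, each with a finite column $S_1^{[i']}$.  Since $R_i(\bar b c)$ (in the derived sense) holds at every stage after Step 0, this $Y$ has $\delta(Y)<|\bar b_i|$ and is registered as an obstruction each time we re-enter Step 1.  Once all the finitely many $\hat\L$-columns used by $Y$ have finished enumerating, none of those symbols is ever again suspicious, and $R_i$ itself is never suspicious while waiting in Step 1, so from that point on $\dalet(Y)<|\bar b_i|$ forever and the obstruction is never removed --- contradicting that we pass to Step 2 infinitely often.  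Hence $c\notin\acl(\bar b_i)$ and $\bar b_i$ is not a basis.  Your plan's item (4) gestures at this conclusion, but you need to run the argument in this direction, by exhibiting a permanent obstruction from a hypothetical algebraicity witness, rather than by trying to wring a $\mu$-violation out of the $\Omega$-extensions, which simply isn't there once the indices are in $S_0$.
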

\begin{proof}
	We check that no $B_i,\bar{b}_i$ can be a recursive model of $T$ with basis $\bar{b}_i$.
	
	If we never leave step 0, then $S_1^{[i]}$ is finite, hence $R_i\in \hat{\L}$. By Lemma \ref{lem: tuple must extend to R_is}, clearly $B_i\not\models T$. So we may assume we have reached step 1 at some stage. Observe that in every subsequent stage, we have $R_i(\bar{b}c)$ holding. This is because we only enumerate a number into $S_{1,s}^{[i]}$ if we have both $n+6$-many $\Omega_{\langle i,j^{s-1}_0\rangle}$ and $n+6$-many $\Omega_{\langle i,j^{s-1}_1\rangle}$-extensions over $\bar{b}c$. Recall that $\Omega_{\langle i,j^{s-1}_1\rangle}$ only uses $R$, thus even after enumerating $\langle i,j^{s-1}_0\rangle$ into $S_0$, we still have $n+6$-many $\Omega_{\langle i,j^{s-1}_1\rangle}=\Omega_{\langle i,j^s_0\rangle}$-extensions over $\bar{b}c$, so $R_i(\bar{b}c)$ still holds.
	
	There are two possible outcomes to the strategy to defeat $B_i,\bar{b}_i$: Either we go through step 2 finitely or infinitely often.
	
	If we go through step 2 finitely often, then $R_i\in \hat{\L}$ and we must get stuck in step 1. This is either because of a non-removed obstruction, in which case $\delta(\bar{b}, B_i)<\abs{\bar{b}}$ or we never have both $n+6$-many  $\Omega_{\langle i,j_0\rangle}$ and $n+6$-many $\Omega_{\langle i,j_1\rangle}$-extensions over $\bar{b}c$. The first option means $\bar{B}_i$ is not a basis, so we may assume it is false, implying $\bar{b}c\leq B_i$. By (3'') and choice of $\mu$, since $\bar{b}c\leq B_i$, all of these $n+6$-many $\Omega_{\langle i,j_0\rangle}$ and $n+6$-many $\Omega_{\langle i,j_1\rangle}$-extensions over $\bar{b}c$ should exist. Thus, $B_i$ cannot model $T$. 
	
	In the infinite outcome, we argue that if $B_i\models T$ and $\bar{b}_i$ is independent in $B_i$, then $c\notin \acl(\bar{b}_i)$. Suppose otherwise that $B_i\models T$ and $\bar{b}_i$ is independent and there is some $Y$ containing $\bar{b}_ic$ and $\delta(Y)=\abs{\bar{b}_i}$. Recall $\delta$ is calculated in the language $\hat{\L}$, which in this case does not include $R_i$.
	Thus, when we also consider the relation $R_i(\bar{b}c)$, from some stage onward this $Y$ forms an obstruction that is never removed. 
	So, we get stuck in step 1 contradicting that we are in the infinite outcome. Thus, under the infinite outcome, if $B_i\models T$ and $\bar{b}_i$ is independent, then $c\notin \acl(\bar{b}_i)$, contradicting that $\bar{b}$ is a basis for $B_i$.
\end{proof}

\begin{thm}
	$T$ is a strongly minimal, flat, non-disintegrated, model complete theory in a language with finite signature, and $\SRM(T)=[0,n]\cup\{\omega\}$.
\end{thm}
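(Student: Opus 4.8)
### Proof Proposal

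The plan is to assemble the final theorem by collecting the properties of $T$ that have already been established piecemeal through the preceding sections, together with the two computational containments proved in the last two sections. Everything is already in hand; the theorem is essentially a bookkeeping statement, and the proof is a short citation of earlier results.

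First I would recall that $T$ is strongly minimal: this follows from the general amalgamation machinery, specifically Corollary~\ref{itisstronglyminimal} applied to $\hat{T}$, together with the fact (Lemma~\ref{edef}) that $\hat{T}$ is a definitional expansion of $T$, so $T$ inherits strong minimality. Next, flatness and non-disintegratedness of $T$ are exactly the content of Lemma~\ref{Tflat}, which in turn rests on Corollary~\ref{firstTisflat} (flatness of the $d$-geometry of any Hrushovski generic) and the explicit witness $\{a,b,c\}$ with $R(a,b,c)$ showing $c\in\acl(ab)\setminus(\acl(a)\cup\acl(b))$. Model completeness of $T$ is Corollary~\ref{Tmodelcomplete}. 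The language $\L=\{R\}$ has finite signature by construction — this is the whole point of passing to the reduct of $\hat{\L}$, which may be infinite, down to the single ternary symbol $R$; here one uses Lemma~\ref{edef} (and the corollary following it) to know that each $R_i\in\hat{\L}$ is both existentially and universally definable in $\M$, so no expressive power is lost in the reduct.

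It then remains to compute $\SRM(T)$. The containment $\SRM(T)\supseteq[0,n]\cup\{\omega\}$ is Corollary~\ref{compstuff}: the recursive construction of Section~5 produces a recursive copy $\N\vert\L$ of the saturated (hence $\omega$-dimensional) model $\M$, and for each $k\le n$ the $k$-dimensional model is $\acl(\bar a)$ for an independent $k$-tuple $\bar a$, which was shown to be a $\Sigma^0_1$ subset of the recursive structure $\hat{\N}$ and therefore has a recursive presentation by the standard fact that an r.e.\ substructure of a recursive structure is recursively presentable. For the reverse containment, the model of dimension strictly between $n$ and $\omega$ is excluded by the previous lemma, $[n+1,\omega)\cap\SRM(T)=\emptyset$, which is obtained by the priority construction of Section~\ref{choosingS1} enumerating $S_1$ against every candidate pair $(B_i,\bar b_i)$. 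Combining $\SRM(T)\supseteq[0,n]\cup\{\omega\}$ with $\SRM(T)\cap[n+1,\omega)=\emptyset$ yields $\SRM(T)=[0,n]\cup\{\omega\}$ exactly, since $\SRM(T)\subseteq\omega+1=[0,n]\cup[n+1,\omega)\cup\{\omega\}$.

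The only place where real care is needed — and hence the main obstacle, already dispatched earlier — is making the two computational bounds consistent with a single choice of $S_1$: the construction of the low-dimensional recursive models in Section~5 must tolerate the dynamic enumeration of $S_1$ (relations $R_i$ becoming defunct and being removed during the clean-up phase while maintaining predimensions, via Corollary~\ref{droppingdimensionsatwill}), and the diagonalization in Section~\ref{choosingS1} must exploit precisely the asymmetry that a dimension-$\le n$ model can always remove a spurious $R_i$ but a model of dimension $>n$ cannot, because an $(n+2)$-tuple carrying $R_i$ has finite $g$-value only when the ambient predimension is $\le n$. For the final theorem itself, though, nothing further is required beyond citing Corollary~\ref{compstuff} and the lemma of Section~\ref{choosingS1}.
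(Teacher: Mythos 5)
Your proof is correct and takes essentially the same route as the paper's: both assemble the theorem by citing Corollary~\ref{itisstronglyminimal} (plus the definitional-expansion fact) for strong minimality, Lemma~\ref{Tflat} for flatness and non-disintegratedness, Corollary~\ref{Tmodelcomplete} for model completeness, Corollary~\ref{compstuff} for the inclusion $[0,n]\cup\{\omega\}\subseteq\SRM(T)$, and the lemma of Section~\ref{choosingS1} for $[n+1,\omega)\cap\SRM(T)=\emptyset$. Your write-up is slightly more explicit about why strong minimality transfers from $\hat{T}$ to $T$ and why the two computational bounds are mutually consistent, but no new argument is introduced.
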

\begin{proof}
	The theory $T$ is in the language $\L$ which has the finite signature $\{R\}$. In the previous lemma, we showed that $[n+1,\omega)\cap \SRM(T)=\emptyset$. In Corollary \ref{compstuff}, we showed that $[0,n]\cup \{\omega\}\subseteq \SRM(T)$. Thus $\SRM(T)=[0,n]\cup \{\omega\}$. In Corollary \ref{itisstronglyminimal}, we showed $\hat{T}$ is strongly minimal, which implies $T$ is strongly minimal. In Corollary \ref{Tmodelcomplete} we showed $T$ is model complete. In Lemma \ref{Tflat} we showed that $T$ is flat and non-disintegrated.
\end{proof}

\bibliographystyle{alpha}
\bibliography{refs}

\end{document}